\newtheorem{defn}{Definition}[section]
\newtheorem{lemma}[defn]{Lemma}
\newtheorem{prop}[defn]{Proposition}
\newtheorem{theo}[defn]{Theorem}
\newtheorem{coro}[defn]{Corollary}
\newtheorem{claim}{Claim}
\newtheorem{rk}[defn]{Remark}
\def\eucl{\mathop{\rm eucl}\nolimits}
\def\div{\mathop{\rm div}\nolimits}
\def\eucl{\mathop{\rm eucl}\nolimits}
\def\div{\mathop{\rm div}\nolimits}
\def\supp{\mathop{\rm supp}\nolimits}
\def\R{\mathop{\rm \mathbb{R}}\nolimits}
\def\Lip{\mathop{\rm Lip}\nolimits}
\def\sgn{\mathop{\rm sgn}\nolimits}
\newsavebox\CBox
\newcommand\hcancel[2][0.5pt]{%
  \ifmmode\sbox\CBox{$#2$}\else\sbox\CBox{#2}\fi%
  \makebox[0pt][l]{\usebox\CBox}%
  \rule[0.5\ht\CBox-#1/2]{\wd\CBox}{#1}}
\newcommand{\nor}{\hcancel{\|}}
\begin{document}
\title{Existence of expanders of the harmonic map flow}
\date{\today}

\author{Alix Deruelle and Tobias Lamm}
\address[Alix Deruelle]{Institut de Math\'ematiques de Jussieu, Paris Rive Gauche (IMJ-PRG) UPMC - Campus Jussieu, 4, place Jussieu Bo\^ite Courrier 247 - 75252 Paris Cedex 05}
\email{alix.deruelle@imj-prg.fr}
\address[Tobias Lamm]{Institute for Analysis, Karlsruhe Institute of Technology (KIT), Englerstr. 2, 76131
Karlsruhe, Germany}
\email{tobias.lamm@kit.edu}

\begin{abstract}
 We investigate the existence of weak expanding solutions of the harmonic map flow for maps with values into a smooth closed Riemannian manifold. We prove the existence of such solutions in the case the target manifold is isometrically embedded as a hypersurface of some Euclidean space and the initial condition is a Lipschitz map that is homotopic to a constant. Regularity is proved outside a compact set.
\end{abstract}

\maketitle

\tableofcontents

\section{Introduction}
In this paper, we consider the Cauchy problem for the harmonic map flow of maps $(u(t))_{t\geq 0}$ from $\R^n$, $n\geq 3$ to a closed smooth Riemannian manifold $(N^{m-1},g)$ isometrically embedded as a hypersurface in some Euclidean space $\R^m$, $m\geq 2$. More precisely, we study the parabolic system
\begin{equation}
\left\{\begin{aligned}
&\partial_tu=\Delta u+A(u)(\nabla u,\nabla u),\quad\mbox{on $\mathbb{R}^n\times\mathbb{R}_+$},&\label{eq-HMP} \\
&u|_{t=0}=u_0,&
\end{aligned}
\right.
\end{equation}
for a given map $u_0:\mathbb{R}^n\rightarrow N$, where $A(u)(\cdot,\cdot):T_uN\times T_uN\rightarrow (T_uN)^{\perp}$ denotes the second fundamental form of the embedding $N^{m-1} \hookrightarrow \R^{m}$ evaluated at $u$. Note that the equation (\ref{eq-HMP}) is equivalent to $\partial_tu-\Delta u\perp T_uN$ for a family of maps $(u(t))_{t\geq0}$ which map into $N$.
Recall that this evolution equation is invariant under the scaling
\begin{eqnarray}
(u_{0})_{\lambda}(x)&:=&u_0(\lambda x),\quad x\in\R^n,\label{scal-1}\\
u_{\lambda}(x,t)&:=&u(\lambda x,\lambda^2t), \quad \lambda>0,\quad \text{$(x,t)\in \R^n\times\R_+$}.\label{resc-cond}
\end{eqnarray}
 
 If $u_0$ is invariant under the above scaling, i.e. if $u_0$ is $0$-homogeneous, solutions of the harmonic map flow which are invariant under scaling are potentially well-suited for smoothing out $u_0$ instantaneously. Such solutions are called expanding solutions or expanders.
 In this setting, it turns out that (\ref{eq-HMP}) is equivalent to a static equation, i.e. that it does not depend on time anymore. Indeed, if $u$ is an expanding solution in the previous sense then the map $U(x):=u(x,1)$ for $x\in \mathbb{R}^n$, satisfies the elliptic system
\begin{equation}
\left\{\begin{aligned}
&\Delta_f U+A(U)(\nabla U,\nabla U)=0,\quad\mbox{on $\mathbb{R}^n$},&\label{eq-HMP-Stat}\\
&\lim_{r\rightarrow+\infty}U(r,\omega)=u_0(\omega),\quad (r,\omega)\in \R_+\times\mathbb{S}^{n-1},
\end{aligned}
\right.
\end{equation}
 
where, $f$ and $\Delta_f$ are defined by
\begin{eqnarray*}
&&f(x):=\frac{|x|^2}{4}+\frac{n}{2},\quad x\in\mathbb{R}^n,\\
&&\Delta_fU:=\Delta U+\nabla f\cdot\nabla U=\Delta U+\frac{r}{2}\partial_rU.
\end{eqnarray*}
The function $f$ is called the potential function and it is defined up to an additive constant. The choice of this constant is dictated by the requirement $$\Delta_ff=f.$$ The operator $\Delta_f$ is called a weighted laplacian and it is unitarily conjugate to a harmonic oscillator $\Delta-|x|^2/16$.

Conversely, if $U$ is a solution to (\ref{eq-HMP-Stat}) then the map $u(x,t):=U(x/\sqrt{t})$, for $(x,t)\in\R^n\times\R_+$, is a solution to (\ref{eq-HMP}). Because of this equivalence, $u_0$ can be interpreted either as an initial condition or as a boundary data at infinity. 

The interest in expanding solutions is basically due to two reasons. On the one hand, these scale invariant solutions are important with respect to the continuation of a weak harmonic map flow between two closed Riemannian manifolds. Indeed, by the work of Chen and Struwe \cite{Chen-Struwe}, there always exists a weak solution of the harmonic map flow starting from a smooth map between two closed Riemannian manifolds. It turns out that such a flow is not always smooth and the appearance of singularities is caused by either non-constant $0$-homogeneous harmonic maps defined from $\mathbb{R}^n$ to $N$, the so called tangent maps, or shrinking solutions (also called quasi-harmonic spheres) that are ancient solutions invariant under scaling. Expanding solutions can create an ambiguity in the continuation of the flow after it reaches a singularity by a gluing process.  On the other hand, one might be interested in using the smoothing effect of the harmonic map flow. More precisely, it is tempting to attach a canonical map to any map between (stratified) manifolds with prescribed singularities. It turns out that $0$-homogeneous maps are the building blocks of such singularities and expanding solutions are likely to be the best candidates to do this job.

In this paper, we investigate the question of existence of expanding solutions coming out of $u_0$ in the case where there is no topological obstruction, i.e. under the assumption that $u_0$ is homotopic to a constant when restricted to $\mathbb{S}^{n-1}$. Our main result is the following

\begin{theo}\label{main-theo}
Let $n\geq 3$ and $m\geq 2$ be two integers and let $u_0:\mathbb{R}^n\rightarrow (N^{m-1},g)\subset\R^m$ be a Lipschitz $0$-homogeneous map such that its restriction to $\mathbb{S}^{n-1}$ is homotopic to a constant. 

Then there exists a weak expanding solution $u(\cdot,1)=:U(\cdot)$ of the harmonic map flow coming out of $u_0$ weakly which is regular off a closed singular set with at most finite $(n-2)$-dimensional Hausdorff measure. Moreover, there exists a radius $R=R(\|\nabla u_0\|_{L^2_{loc}},n,m)>0$ and a constant $C=C(\|\nabla u_0\|_{L^2_{loc}},n,m)>0$ such that $U$ is smooth outside $B(0,R)$ and,
\begin{eqnarray*}
&&|\nabla U|(x)\leq \frac{C}{|x|},\quad |x|\geq R,\\
&&\|\nabla u(t)\|_{L^2(B(x_0,1))}\leq C\left(n,m,\|\nabla u_0\|_{L^2_{loc}(\R^n)},t\right)\|\nabla u_0\|_{L^2(B(x_0,1))},\quad \forall x_0\in\R^n,\\
&&\|\partial_tu\|_{L^2((0,t),L^2_{loc}(\R^n))}\leq C(n,m,t)\|\nabla u_0\|_{L^2_{loc}(\R^n)},\\
\end{eqnarray*}
where $\lim_{t\rightarrow 0}C\left(n,m,\|\nabla u_0\|_{L^2_{loc}(\R^n)},t\right)=\lim_{t\rightarrow0}C(n,m,t)=1$.

In particular, $u(\cdot, t)$ tends to $u_0$ as $t$ goes to $0$ in the $H^1_{loc}(\R^n)$ sense and if $u_0$ is not harmonic then $u(\cdot,t)$ is not constant in time. Finally, one has the following convergence rate:
\begin{eqnarray}
|U(x)-u_0(x/|x|)|\leq C|x|^{-1},\quad |x|\geq R.\label{conv-rate}
\end{eqnarray}

\end{theo}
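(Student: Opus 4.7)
My plan is to construct the expander parabolically: smooth the initial datum, solve the Cauchy problem~(\ref{eq-HMP}) for the approximants, pass to a weak limit, and then use the scaling invariance~(\ref{scal-1})--(\ref{resc-cond}) to upgrade the limit to a self-similar solution, from which the elliptic $U$ is recovered via $U(x)=u(x,1)$.

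First I would smooth $u_0$. A $0$-homogeneous Lipschitz map is smooth off the origin, so the only obstruction to regularity is a possible singularity at $x=0$. This is precisely where the null-homotopy assumption enters: use the homotopy to a constant to fill in a small ball $B(0,\eta)$ with smooth $N$-valued data, obtaining a family of smooth maps $u_0^\eta\to u_0$ in $H^1_{loc}$ with $\|\nabla u_0^\eta\|_{L^2_{loc}}$ bounded uniformly in $\eta$. For each $u_0^\eta$ I would solve~(\ref{eq-HMP}) globally, e.g.\ via a Ginzburg--Landau penalization of the target constraint or by an exhaustion of $\R^n$ by Dirichlet problems on large balls, producing a global weak solution $u^\eta$.

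The technical core is then to obtain $\eta$-independent bounds matching those displayed in the theorem: testing~(\ref{eq-HMP}) against a cutoff squared times $(u^\eta-u_0^\eta)$ yields the localized $L^\infty_tH^1_x$ estimate; testing against $\partial_tu^\eta$ and integrating in time controls $\partial_tu^\eta$ in $L^2_{t,x,\,\mathrm{loc}}$; and Struwe's parabolic monotonicity formula together with his $\varepsilon$-regularity theorem yields smoothness wherever the rescaled energy is small, uniformly in $\eta$. Passing to a weak $H^1_{loc}$ limit gives a weak solution $u$ of~(\ref{eq-HMP}) starting from $u_0$. Since $u_0$ is invariant under the scaling~(\ref{scal-1}), the rescalings $u_\lambda(x,t):=u(\lambda x,\lambda^2 t)$ are also weak limits of the same scheme; arranging the whole approximation (parameter $\eta$, cutoffs, penalization) to be $\lambda$-equivariant would force $u_\lambda\equiv u$, and then $U(x):=u(x,1)$ solves the elliptic system~(\ref{eq-HMP-Stat}).

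Partial regularity of $U$ off a closed set of finite $(n-2)$-dimensional Hausdorff measure then follows from stationary/minimizing harmonic map theory (Schoen--Uhlenbeck, Lin), adapted to the drift Laplacian $\Delta_f$ via a weighted monotonicity identity based on $f$. For the asymptotics I would carry out a blow-down analysis: as $\lambda\to\infty$ the drift $\frac{r}{2}\partial_r$ in~(\ref{eq-HMP-Stat}) forces the rescalings $U(\lambda\cdot)$ to converge to the $0$-homogeneous profile $u_0(\cdot/|\cdot|)$, which combined with $\varepsilon$-regularity yields smoothness and $|\nabla U|\le C/|x|$ outside a large ball; the sharp convergence rate~(\ref{conv-rate}) would come from a linearized analysis of~(\ref{eq-HMP-Stat}) around $u_0(\cdot/|\cdot|)$, in which the drift acts as a first-order damping on radial perturbations. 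The hardest step, I expect, is the self-similarity of the limit $u$: since weak harmonic map flows are generally not unique (Rivi\`ere, Topping), $u_\lambda\equiv u$ cannot be obtained by a soft uniqueness argument and must be engineered into the approximation, or else one must characterize $u$ variationally in terms of a suitably renormalized weighted energy on $\R^n$ with prescribed asymptotics.
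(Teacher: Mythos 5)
Your plan has a genuine gap at exactly the step you flag as "hardest," and it is not a refinement issue but a structural one. Smoothing $u_0$ by filling in $B(0,\eta)$ destroys $0$-homogeneity, so each approximant $u_0^\eta$ has no scaling symmetry, each parabolic solution $u^\eta$ is not self-similar, and the weak limit $u$ inherits no scaling invariance from the scheme. Saying that one should arrange the approximation to be "$\lambda$-equivariant" does not resolve this: even a scale-equivariant scheme yields a $\lambda$-indexed \emph{family} of weak limits $u_\lambda$, and to conclude $u_\lambda\equiv u$ you would need a selection/uniqueness principle for weak solutions with large Lipschitz data, which does not exist (you cite Rivi\`ere and Topping yourself). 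There is no "renormalized weighted energy on $\R^n$ with prescribed asymptotics" in the paper that plays this variational role. In short, the route "solve parabolically, then argue self-similarity" is the wrong order of operations here.

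The paper avoids this entirely by never posing the parabolic Cauchy problem: it works directly with the \emph{elliptic} self-similar profile. Concretely, it formulates (\ref{Chen-Str-equ-stat}), a static drift-Laplacian equation for $V_K\in X$ (the correction to the caloric extension $U_0$ of $u_0$), so self-similarity is built in, not derived. Existence is obtained by Leray--Schauder degree theory, and here is where the null-homotopy assumption actually enters: not to fill in a ball, but to give a continuous path $(u_0^\sigma)_{\sigma\in[0,1]}$ of $0$-homogeneous data from $u_0$ to a constant map. The degree is computed at $\sigma=1$, where a small-data contraction argument gives a unique fixed point, and a priori estimates uniform in $\sigma$ (the weighted $C^0$ and $C^1$ bounds, the Bochner and monotonicity formulas, the $\varepsilon$-regularity theorem) propagate the degree back to $\sigma=0$. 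The pieces of your plan that do align with the paper are the Chen--Struwe-type penalization, the localized energy and $\partial_t u$ estimates, and the $\varepsilon$-regularity/monotonicity machinery used to get smoothness and decay at infinity; but these are deployed on the elliptic problem as a priori estimates for the degree argument, not as compactness tools for a parabolic limit. Your blow-down heuristic for the far-field asymptotics is also not what the paper does: the rate (\ref{conv-rate}) comes from a Shi-type interior gradient estimate on the elliptic equation (Claim \ref{claim-der}) combined with $\varepsilon$-regularity at spatial infinity, not a linearization around the boundary cone. If you want to salvage your outline, the essential change is to pose the fixed-point problem for the profile $U$ (equivalently $V=U-U_0$) rather than for the time-dependent flow, and to use the homotopy to a constant as the Leray--Schauder homotopy parameter rather than as a device to desingularize $u_0$ at the origin.
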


We remark that the regularity result of the theorem is reminiscent of and based on the fundamental work of Chen and Struwe \cite{Chen-Struwe}. We localise their approach to ensure the smoothness of the solution outside a closed ball since the local energy is decaying to $0$ at infinity. This lets us to establish a sharp convergence rate for Lipschitz maps. Theorem \ref{main-theo} and its proof provide the existence of a non constant in time (or equivalently non radial) expanding solution in case the initial map is not harmonic. Since the initial condition $u_0$ is allowed to have large local-in-space energy, it is likely that uniqueness would fail. In particular, the authors do not know if the solution produced by Theorem \ref{main-theo} coming out of a $0$-homogeneous harmonic map will stay harmonic.  

Let us make some comments about the proof of Theorem \ref{main-theo} before we describe its main steps. A direct perturbative approach is well-suited in case the target Riemannian manifold $(N,g)$ has non-positive sectional curvature as shown by the second author \cite{Der-rel-Ent-HMF} or if the $L^2_{loc}(\R^n)$ energy of $u_0$ is assumed to be arbitrarily small: see Section \ref{sec-def-theory-HMF} for a proof. One more instance where such a direct approach works well is by imposing further symmetry on the initial condition $u_0$ and the target manifold $N$ as initiated by Germain and Rupflin \cite{Ger-Rup}. To conclude, the nonlinearity of the target manifold and the potential formation of finite time singularities are the two main obstacles to a direct perturbative approach in general.  

To circumvent this issue, we follow Chen-Struwe's penalisation procedure \cite{Chen-Struwe} and we construct our expanding solution to the harmonic map flow as a limit of expanding solutions starting from the same initial condition $u_0$ of a so called homogeneous Chen-Struwe flow with parameter $K$, see Section \ref{Section-Fixed-point-formulation} for more definitions. Before stating the main result about this flow, we recall some definitions. 

Let $n\geq 3$ and let $u_0:\R^n\rightarrow (N,g)\subset\R^m$ be a $0$-homogeneous map. Let us notice that $(N,g)$ is not assumed to be a hypersurface of $\R^m$ at this stage.  

A map $u:\R^n\times(0,T)\rightarrow\R^m$ is a weak solution of the Homogeneous Chen-Struwe flow with initial condition $u_0$ if it satisfies:
\begin{equation}
\left\{\begin{aligned}
&\partial_tu=\Delta u-\frac{K}{t}\chi'\left( d^2_{N}(u)\right)\nabla\left( \frac{d^2_{N}}{2}\right)(u),\quad\mbox{on $\mathbb{R}^n\times\mathbb{R}_+$},&\\
&u|_{t=0}=u_0,\quad\text{in the weak sense},&\label{hom-chen-struwe-def-intro}
\end{aligned}
\right.
\end{equation}
where $\chi$ is a smooth, non-decreasing function such that $\chi(s)=s$ for $0\leq s\leq \delta_N^2$ and $\chi(s)=2\cdot \delta_N^2$ if $s\geq (2\cdot \delta_N)^2$ for some positive constant $\delta_N$ depending on the geometry of the embedding of $N$ in $\R^m$. As in \cite{Chen-Struwe}, such a function $\chi$ is designed to ignore the set of points of $\R^m\setminus N$ where the distance function $d_N$ is not smooth. We notice that (\ref{hom-chen-struwe-def-intro}) differs from the flow introduced by Chen and Struwe by a factor of time $t^{-1}$ in front of the parameter $K$: the main reason is to keep (\ref{hom-chen-struwe-def-intro}) invariant under parabolic rescalings as defined in (\ref{resc-cond}).
 
Moreover, the pointwise energy associated to a solution $u:\R^n\times\R_+\rightarrow \R^m$ of the Homogeneous Chen-Struwe flow (\ref{hom-chen-struwe-def-intro}) is defined by 
\begin{eqnarray}
e_K(u)(x,t):=\frac{1}{2}\left(|\nabla u|^2(x,t)+\frac{K}{t}\chi\left( d^2_{N}(u(x,t))\right)\right),\quad (x,t)\in\R^n\times\R_+.\label{ptwise-energy-K-intro}
\end{eqnarray}

For any expanding solution $u_K$ of the Homogeneous Chen-Struwe flow with parameter $K$ defined on $\R^n\times \R_+$, we denote its evaluation at time $t=1$ by $U_K(x):=u_K(x,1)$, for $x\in\R^n.$ \\

Formally speaking, the Homogeneous Chen-Struwe flow with parameter $K$ is the gradient flow of the (total) energy of a solution $u$ given by the integral of the pointwise energy $e_K$ introduced in (\ref{ptwise-energy-K-intro}). As $K$ goes to $+\infty$, the penalty term $$ K\int_{B(x_0,1)}\chi\left( d^2_{N}(u(x,t))\right)dx,\quad (x_0,t)\in\R^n\in\R_+,$$ forces the unconstrained solutions $u_K$ to take their values in $N$, at least heuristically.

The following theorem sums up the main properties of the  expanding solutions to this Homogeneous Chen-Struwe flow we are able to construct and make precise these heuristics:

\begin{theo}\label{Chen-Str-app}
Let $u_0:\mathbb{R}^n\rightarrow (N^{m-1},g)\subset \R^m$ be a $0$-homogeneous map in $C^3_{loc}(\R^n\setminus\{0\})$ for $n\geq 3$ such that its restriction to $\mathbb{S}^{n-1}$ is homotopically trivial. Then for any $K>0$, there exists a regular Chen-Struwe expanding solution $u_K$ coming out of $u_0$
\begin{equation}
\left\{\begin{aligned}
&-\Delta_fU_K+K\chi'\left( d^2_{N}(U_K)\right)\nabla\left( \frac{d^2_{N}}{2}\right)(U_K)=0,&\label{Chen-Str-equ}\\
&\lim_{t\rightarrow 0^+}u_K(\cdot,t)= u_0\quad\mbox{in the weak sense. }
\end{aligned}
\right.
\end{equation}

Moreover, there exists a radius $R=R(\|\nabla u_0\|_{L^2_{loc}},n,m)>0$ and a positive constant $C=C(\|\nabla u_0\|_{L^2_{loc}},n,m)$ such that,
\begin{equation}
\begin{split}
&e_K(u_K)(x,1):=\frac{|\nabla U_K(x)|^2}{2}+\frac{K}{2}\chi\left( d^2_{N}(U_K(x))\right)\leq \frac{C}{|x|^2},\quad |x|\geq R,\\
&\|e_K (u_K)(t)\|_{L^1(B(x_0,1))}\leq C\left(n,m,\|\nabla u_0\|_{L^2_{loc}(\R^n)},t\right)\|\nabla u_0\|^2_{L^2(B(x_0,1))},\quad\forall x_0\in\R^n,\\
&\|\partial_tu_K\|_{L^2((0,t),L^2_{loc}(\R^n))}\leq C(n,m,t)\|\nabla u_0\|_{L^2_{loc}(\R^n)},\label{ene-est-theo-Chen-Str-app}
\end{split}
\end{equation}
where $\lim_{t\rightarrow 0}C\left(n,m,\|\nabla u_0\|_{L^2_{loc}(\R^n)},t\right)=\lim_{t\rightarrow 0}C(n,m,t)=1$.

 Finally, $u_K(t)$ converges strongly to $u_0$ as $t$ goes to $0$ in $H^1_{loc}(\R^n)$.
\end{theo}
 
The proof of Theorem \ref{Chen-Str-app} is divided into four steps explained just before Section \ref{section-well-def-2}. Each step is proved in Sections \ref{section-well-def-2}, \ref{section-CS-well-posed}, \ref{sec-varespilon-reg-thm} and \ref{sec-a-priori-chen-struwe-expanders}.

  In order to prove Theorem \ref{Chen-Str-app}, we reformulate (\ref{Chen-Str-equ}) as a fixed point problem in the perspective of applying the Leray-Schauder Fixed Point Theorem as has been done in \cite{Jia-Sverak} in the context of the Navier-Stokes equation: the main reason why we assume the target Riemannian manifold $(N^{m-1},g)$ is isometrically embedded as a hypersurface of $\R^m$ is to make sense of this fixed point formulation.
 
 In order to guess the qualitative properties of the space of perturbations, we investigate the properties of the first approximations in Section \ref{sec-first-app}.\\

In the case of the target manifold being a round sphere another approximation of the harmonic map flow was introduced by Chen \cite{Chen-Har-Map}. In our setting, it is given by the homogeneous Ginzburg-Landau flow with parameter $K>0$
  \begin{equation}
\left\{\begin{aligned}
&\partial_tu=\Delta u+\frac{K}{t}(1-|u|^2)u,\quad\mbox{on $\mathbb{R}^n\times\mathbb{R}_+$},&\label{eq-GL-K} \\
&u|_{t=0}=u_0.&
\end{aligned}
\right.
\end{equation}
The reason why we introduce the factor $t^{-1}$ in front of the term $K(1-|u|^2)u$ is to make the Ginzburg-Landau flow invariant under the same scaling (\ref{scal-1}) and (\ref{resc-cond}) as the harmonic map flow. Despite its physical relevance, the homogeneous Ginzburg-Landau flow does not seem to give a precise estimate on the singular set of the limiting harmonic map as was noticed by Chen and Struwe. This is essentially due to the lack of a good Bochner formula which in turn is caused by the difficulty of controlling the vanishing set of an expanding solution a priori. 
Still, with the same methods one gets the following result:
\begin{theo}\label{Ginz-Land-app}
Let $u_0:\mathbb{R}^n\rightarrow \mathbb{S}^{m-1}\subset \R^m$ be a $0$-homogeneous map in $C^3_{loc}(\R^n\setminus\{0\})$, $n\geq 3$, such that its restriction to the sphere $\mathbb{S}^{n-1}$ is homotopically trivial. Then for any $K>0$, there exists a smooth Homogeneous Ginzburg-Landau expanding solution $u_K$ coming out of $u_0$:
\begin{equation}
\left\{\begin{aligned}
&\Delta_fU_K+K(1-|U_K|^2)U_K=0,\label{Ginz-Land-equ}&\\
&\lim_{t\rightarrow 0^+}u_K(\cdot,t)= u_0,\quad\mbox{in the weak sense.}&
\end{aligned}
\right.
\end{equation}

Moreover, if $$e_K(u_K)(x,t):=\frac{1}{2}\left(|\nabla u_K|(x,t)^2+\frac{K}{2t}(1-|u_K(x,t)|^2)^2\right),\quad(x,t)\in\R^n\times\R_+,$$ denotes the pointwise energy associated to the solution $u_K$ then:

\begin{eqnarray*}
&&\|e_K (u_K)(t)\|_{L^1(B(x_0,1))}\leq C\left(n,m,\|\nabla u_0\|_{L^2_{loc}(\R^n)},t\right)\|\nabla u_0\|^2_{L^2(B(x_0,1))},\quad\forall x_0\in\R^n,\label{unif-bds-I-GL}\\
&&\|\partial_tu_K\|_{L^2((0,t),L^2_{loc}(\R^n))}\leq C(n,m,t)\|\nabla u_0\|_{L^2_{loc}(\R^n)},\label{unif-bds-II-GL}
\end{eqnarray*}
where $\lim_{t\rightarrow 0}C\left(n,m,\|\nabla u_0\|_{L^2_{loc}(\R^n)},t\right)=\lim_{t\rightarrow 0}C(n,m,t)=1$.

In particular, $u_K(t)$ converges strongly to $u_0$ as $t$ goes to $0$ in $H^1_{loc}(\R^n)$.

\end{theo}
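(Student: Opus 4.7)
The strategy is to reduce the parabolic problem to an elliptic one by self-similarity: if $U_K$ is a smooth bounded solution of $\Delta_f U_K+K(1-|U_K|^2)U_K=0$ on $\R^n$ with $U_K(x)\to u_0(x/|x|)$ as $|x|\to\infty$, then $u_K(x,t):=U_K(x/\sqrt t)$ automatically solves \eqref{eq-GL-K}, and the estimates \eqref{unif-bds-I-GL}--\eqref{unif-bds-II-GL} become scale-invariant reformulations of integral bounds on $U_K$. To produce $U_K$, I would first use the homotopic triviality of $u_0$ to extend the cone datum across the origin to a smooth $\mathbb{S}^{m-1}$-valued map $U_\infty\in C^\infty(\R^n;\mathbb{S}^{m-1})$ that coincides with $u_0$ on $\R^n\setminus B(0,1)$, and then solve the Dirichlet problem on each ball by minimising
$$\mathcal{E}_{K,R}(U):=\int_{B(0,R)}\left(\tfrac12|\nabla U|^2+\tfrac K4(1-|U|^2)^2\right)e^f\,dx$$
over $U_\infty+H^1_0(B(0,R);\R^m)$. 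Since $e^f$ is bounded on $B(0,R)$ and $\mathcal{E}_{K,R}(U_\infty)<\infty$, the direct method produces a minimiser $U_R$. The maximum principle applied to the subequation satisfied by $|U_R|^2$ gives $|U_R|\le 1$, and standard semilinear elliptic regularity upgrades $U_R$ to a smooth classical solution of \eqref{Ginz-Land-equ} on $B(0,R)$ with $U_R|_{\partial B(0,R)}=u_0$.

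The heart of the argument is to derive bounds on $U_R$ that are uniform in $R$. I would establish a Ginzburg-Landau analogue of Struwe's parabolic monotonicity formula for the self-similar flow $u_R(x,t):=U_R(x/\sqrt t)$: the localised weighted energy
$$\Phi_{x_0}(u_R,t):=t\int_{\R^n} e_K(u_R)(x,t)\,G_{x_0}(x,t)\,\eta_{x_0}^2(x)\,dx,$$
where $G_{x_0}$ is the heat kernel based at $(x_0,0)$ and $\eta_{x_0}$ a spatial cutoff, is almost monotone in $t$ up to an error controlled by the local Dirichlet energy of $u_0$. Combining this with an upper bound at $t=0^+$ coming from the $0$-homogeneity and Lipschitz regularity of $u_0$ then yields \eqref{unif-bds-I-GL} uniformly in $R$, with $C\to 1$ as $t\to 0$. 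Multiplying the autonomous rescaled equation $\partial_s v_R=\Delta_f v_R+K(1-|v_R|^2)v_R$ for $v_R(y,s):=u_R(e^{s/2}y,e^s)$ by $\partial_s v_R$ with a spatial cutoff and integrating in $s$ delivers \eqref{unif-bds-II-GL}.

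Letting $R\to\infty$ via weak $H^1_{loc}$ compactness, Rellich strong $L^2_{loc}$ convergence (which handles the semilinear nonlinearity $(1-|U|^2)U$), and diagonal extraction produces a smooth weak solution $U_K$ of \eqref{Ginz-Land-equ} inheriting the same local bounds. A blow-down argument around sequences $|x_j|\to\infty$, based on the uniform local energy decay and the Lipschitz regularity of $u_0$, identifies the asymptotic profile of $U_K$ with the cone $x\mapsto u_0(x/|x|)$; strong $H^1_{loc}$ attainment of $u_0$ at $t=0$ then follows from dominated convergence and the pointwise control of $e_K(u_K)(t)$. The main obstacle is establishing the monotonicity formula with sharp enough constants that $C(\ldots,t)\to 1$ as $t\to 0$: the time-dependent penalisation $(K/t)(1-|u|^2)u$ generates an extra term in the Bochner-type computation that has to be absorbed by exploiting $|u_R|\le 1$ and a cutoff adapted to the parabolic scaling, while preserving the correct limiting behaviour at $t=0$.
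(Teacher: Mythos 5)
Your proposal takes a genuinely different route from the paper. The paper proves Theorem~\ref{Ginz-Land-app} ``with the same methods'' as Theorem~\ref{Chen-Str-app}, i.e.\ by Leray-Schauder degree theory: one fixes a first approximation $U_0^{\sigma}$ (caloric or barycentric extension of a homotopy $u_0^{\sigma}$ joining $u_0$ to a constant), writes $U_K = U_0^{\sigma}+V_K$ with $V_K$ in the weighted $C^1$ Banach space $X=\{V:\sup(1+|x|)^2|V|+(1+|x|)^3|\nabla V|<\infty\}$, shows the resulting map $F_K$ on $X$ is compact with degree one near $\sigma=1$, and closes the loop with uniform a priori estimates. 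You instead propose a variational direct method: minimise the weighted Ginzburg-Landau energy on balls $B(0,R)$ over the affine space $U_\infty+H^1_0$, extract a limit as $R\to\infty$. Both schemes use homotopic triviality, but differently -- the paper to build the path $u_0^{\sigma}$ towards a constant, you to fill $u_0$ smoothly across the origin. The variational approach is more elementary in structure and gives minimisers rather than just fixed points, which could be an asset.

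There is, however, a genuine gap in the limit $R\to\infty$: weak $H^1_{loc}$ compactness produces \emph{a} Ginzburg-Landau expander $U_K$, but it does not identify its asymptotic cone with $u_0$. Your ``blow-down argument'' is not enough here: the uniform local energy decay and $\varepsilon$-regularity (which you correctly want to establish via a parabolic monotonicity formula) only tell you that $U_K$ has \emph{some} asymptotic limit along rays, not that this limit equals $u_0$. To match the far-field with the Dirichlet data $U_R|_{\partial B(0,R)}=u_0$, you need a radial-derivative decay $|\partial_r U_R(x)|\lesssim |x|^{-2}$ that is uniform in $R$ \emph{up to} the boundary $\partial B(0,R)$, so you can integrate inward from $|x|=R$. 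The Shi-type interior estimate (the analogue of Claim~\ref{claim-der}) gives $|\partial_r U_R(x)|\lesssim |x|^{-2}$ only on $B(0,(1-\delta)R)$, and on the boundary annulus the best you get from $\varepsilon$-regularity alone is $|\nabla U_R|\lesssim |x|^{-1}$, whose radial integral over $[(1-\delta)R,R]$ contributes a fixed $\log\frac{1}{1-\delta}$ oscillation that does not vanish as $R\to\infty$. You would need an additional uniform boundary regularity argument (or a barrier argument for $W_R:=U_R-u_0$ with a weight $f^{-\alpha}$, which has its own difficulties because $W_R$ solves a genuinely nonlinear equation rather than the linear auxiliary problem (\ref{Chen-Str-equ-expander}) treated in Proposition~\ref{propa-priori-weighted-diri}). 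The paper sidesteps this entirely by building the decay $(1+|x|)^{-2}$ into the space $X$ from the outset, so the boundary behaviour at infinity is structurally enforced on every candidate $V$, not recovered a posteriori.

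A related, secondary gap: the sharp constant $C(\cdot,t)\to 1$ as $t\to 0$ in (\ref{unif-bds-I-GL})--(\ref{unif-bds-II-GL}) does not follow merely from a Struwe-type monotonicity formula with error term. In the paper it comes from the Gronwall argument of Theorem~\ref{theo-l2loc-Chen-Str}, which presupposes that $t\mapsto E_{K,x_0}(u_K(t))$ is continuous at $t=0$; this continuity is established separately in Section~\ref{section-$L^2_{loc}$ convergence-Che-Str} and again relies on the a priori far-field decay $V\in X$. In your scheme, that continuity would have to be proved from scratch, and it is precisely the boundary-matching issue above in disguise. Your strategy is salvageable, but the far-field identification must be addressed explicitly before the claimed estimates can be obtained.
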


We would like to relate our work to previous articles on this subject. To our knowledge, most of the literature concerns maps from $\R^n$ to an equator of a rotationally symmetric target manifold such as the works of Germain and Rupflin \cite{Ger-Rup}, Biernat and Bizon \cite{Bie-Biz} and the more recent work due to Germain, Ghoul and Miura \cite{Ger-Gho-Miu}. In particular, our setting includes theirs in the case the target is a sphere since a map from $\mathbb{S}^{n-1}$ with values in an equator is homotopic to a constant. Of course, since (\ref{eq-HMP}) reduces to an ODE in such a corotational setting, the above mentioned works obtain more quantitative results even if the question of regularity is not really addressed.

There are at least two other partial differential equations that motivate this work. Jia and \v{S}ver\'ak \cite{Jia-Sverak} proved the existence of smooth expanding solutions of the Navier-Stokes equation. In this case, the homogeneity is of degree $-1$. To prove Theorem \ref{main-theo}, we proceed similarly to their work by using the Leray-Schauder degree theory. For this, one needs a path of initial conditions $(u_0^{\sigma})_{0\leq \sigma\leq 1)}:\mathbb{S}^{n-1}\rightarrow N$ connecting the restriction $u_0^0$ of $u_0$ to $\mathbb{S}^{n-1}$ to a suitable map $u^1_0$. A suitable map $u^1_0$ means here that there is an obvious solution coming out of $u_0^1$ for which there is a uniqueness result for small solutions lying in a suitable function space. In the case of the Navier-Stokes equation, the path is given for free since it suffices to contract the initial vector field to zero. In our case, the path is given by assumption. There is also a deep analogy with the Ricci flow that exhibits the same scale invariance. In the setting of the Ricci flow, $u_0$ is replaced by a metric cone $C(M)$ over a closed Riemannian manifold $(M,g)$ endowed with its Euclidean cone metric $dr^2+r^2g$ and the topological assumption on $M$ similar to the triviality of the homotopy class of $u_0$ is that it is null cobordant. See \cite{Der-Asy-Com-Egs} and \cite{Der-Smo-Pos-Met-Con} in the case $(M,g)$ is a Riemmanian manifold with curvature operator larger than $1$ or \cite{Con-Der} in a more algebraic context. \\

Finally, let us describe the content of each sections.\\

In section \ref{sec-def-theory-HMF} we show a perturbation result for expanding solutions with small energy.

Section \ref{sec-hom-Che-Str} defines the notion of homogeneous Chen-Struwe flow and investigates the existence of smooth expanding solutions to this flow as stated in Theorem \ref{Chen-Str-app}.  Section \ref{sec-first-app} analyses carefully the properties of the first approximation and Section \ref{Section-Fixed-point-formulation} reduces the analysis to a fixed point problem. Sections \ref{section-well-def-2}  and \ref{section-CS-well-posed} prove this fixed point problem is well-posed in the context of Leray-Schauder degree theory. Before proceeding to a priori bounds on expanding solutions to the Homogeneous Chen-Struwe flow, Sections \ref{sec-boch-formula-chen-struwe} and \ref{Energy-inequ-section} establish a Bochner formula and a local entropy monotonicity formula that are crucial to prove an $\varepsilon$-regularity theorem in this setting: see Section  \ref{sec-eps-reg-theo}. Then Section \ref{sec-a-priori-chen-struwe} starts proving a priori $C^0$ bounds of such expanding solutions. Sections \ref{sec-a-priori-weighted-est} and \ref{sec-a-priori-C1-weighted-Che-Str} take care of bounding such expanding solutions at infinity a priori. Section \ref{sec-lip-end-proof} ends the proof of Theorem \ref{Chen-Str-app}. 


Finally, Section \ref{section-Taylor} investigates Taylor expansions at infinity of any expanding solutions of the harmonic map flow in terms of the jets of the initial condition at a formal level. This section is inspired by a similar expansion for asymptotically conical expanding solutions of the Ricci flow done by Lott and Wilson \cite{Lott-Wil}.\\


\textbf{Acknowledgments}: The authors wish to thank the anonymous referees for helpful suggestions that improve the exposition of this paper.

\section{Deformation theory of expanders (with small energy)}\label{sec-def-theory-HMF}

From now on, let $u:\mathbb{R}^n\rightarrow N$ be an expanding solution of the harmonic map flow, fixed once and for all. We consider the linearisation of equation (\ref{eq-HMP}) around $u$. It takes the following time-dependent form: if $u+h$ is an expanding solution then,
\begin{eqnarray*}
\partial_t(u+h)&=&\Delta(u+h)+A(u+h)(\nabla (u+h),\nabla (u+h))|,
\end{eqnarray*}
is equivalent to 
\begin{eqnarray*}
\partial_th&=&\Delta h+ A(u+h)(\nabla (u+h),\nabla (u+h))-A(u)(\nabla u,\nabla u)\\
&=:&\Delta h+R(u,\nabla u,h,\nabla h),\\
\end{eqnarray*}
where
\begin{eqnarray}
R(u,\nabla u,h,\nabla h)&:=&A(u+h)(\nabla (u+h),\nabla (u+h))-A(u)(\nabla u,\nabla u).\label{rema-term}
\end{eqnarray}

Our main result in this section is a uniqueness result for expanding solutions close to a constant map.

First, let us introduce the main function space for the initial conditions:
\begin{eqnarray*}
X_{0}:=\left\{u_0:\mathbb{R}^n\rightarrow N\subset \mathbb{R}^m\quad|\quad  \|u_0\|_{BMO} <+\infty\right\},
\end{eqnarray*}
where 
\begin{eqnarray*}
\| u_0\|_{BMO}:=\sup_{x\in\mathbb{R}^n,r>0}\fint_{B(x,r)}|u_0-\fint_{B(x,r)}u_0|(y)dy.
\end{eqnarray*}
 
 Secondly, the relevant function space $X$ of solutions is defined by:
 
 \begin{eqnarray*}
X:=&&\{u:\mathbb{R}^n\times\R_+\rightarrow \mathbb{R}^m \quad |\quad \|u\|_{L^{\infty}(\mathbb{R}^n\times\R_+)}+\sup_{(x,t)\in \mathbb{R}^n\times\R_+}\sqrt{t}|\nabla u|(x,t)\\
&&+\sup_{(x,s)\in \mathbb{R}^n\times\R_+}\hcancel{\|} \nabla u\|_{L^2(P(x,\sqrt{s}))}<+\infty \},
\end{eqnarray*}
where $P(x,r):=B(x,r)\times(0,r^2)$ and where
\begin{eqnarray*}
\hcancel{\|}h\|^p_{L^p(P(x,r))}:=\fint_{P(x,r)}r^2|h|^p(y,s)dyds,\quad x\in \mathbb{R}^n,\quad r>0, \quad p\geq 1,
\end{eqnarray*}
where $h$ is a tensor defined on $\R^n\times\R_+$. The number $\hcancel{\|}h\|^p_{L^p(P(x,r))}$ is the normalized $p$-norm of $h$ on the parabolic neighborhood $P(x,r)$.

Finally, as in \cite{Koc-Lam-Rou}, we introduce a somewhat intermediate function space $Y$:
\begin{eqnarray*}
Y:=\left\{R:\mathbb{R}^n\rightarrow \mathbb{R}^m\quad|\quad \sup_{t\in\R_+}t\|R(t)\|_{L^{\infty}(\mathbb{R}^n)}+\sup_{s\in\R_+}\nor R\|_{L^1(P(x,\sqrt{s}))}<+\infty\right\}
\end{eqnarray*}

We are now in a position to state the main result of this section:

\begin{theo}
There exists $\varepsilon>0$ such that if $u:\mathbb{R}^n\rightarrow N$ is an expanding solution of the harmonic map flow coming out of $u_0\in X_{0}$ with $\||\nabla u|^2\|_{Y}<\varepsilon$, then there is a neighbourhood $\mathcal{U}_0$ of $u_0$ in the $BMO$-topology such that for any $0$-homogeneous map $v_0\in \mathcal{U}_0$, there exists an expanding solution of the harmonic map flow $v:\mathbb{R}^n\rightarrow N$ coming out of $v_0$. Besides, uniqueness holds in a small ball with center $u$ with respect to the topology of $X$.
\end{theo}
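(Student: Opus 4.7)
The plan is a contraction mapping argument in the scale-critical Koch-Tataru / Koch-Lamm-Rousset framework. Writing $v = u + h$, the map $v$ is an expander coming out of $v_0$ if and only if $h$ solves
\[
\partial_t h - \Delta h = R(u,\nabla u, h, \nabla h), \qquad h|_{t=0} = v_0 - u_0,
\]
with $R$ as in \eqref{rema-term}. I would set up the problem as a fixed point for the Duhamel map
\[
\mathcal{T}(h) := e^{t\Delta}(v_0 - u_0) + \int_0^t e^{(t-s)\Delta}\, R(u,\nabla u, h, \nabla h)(s)\,ds
\]
on a small $X$-ball centred at $0$, with the radius of the ball and the BMO-size of $v_0 - u_0$ chosen small depending on $\varepsilon$. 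The linear ingredients are standard: by the Koch-Tataru theory underlying the very definitions of $X_0, X, Y$ (see \cite{Koc-Lam-Rou}), the free heat evolution $e^{t\Delta}$ is continuous $X_0 \to X$, so $\|e^{t\Delta}(v_0 - u_0)\|_X \lesssim \|v_0 - u_0\|_{BMO}$, and the Duhamel operator is bounded $Y \to X$.

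The nonlinear estimate is the crux. Expanding $R$ yields three types of terms:
\[
R = \bigl[A(u+h) - A(u)\bigr](\nabla u, \nabla u) + 2A(u+h)(\nabla u, \nabla h) + A(u+h)(\nabla h, \nabla h).
\]
For the cross term $\nabla u \cdot \nabla h$, the pointwise bound $t|\nabla u||\nabla h| \leq (\sqrt{t}|\nabla u|)(\sqrt{t}|\nabla h|) \leq \|u\|_X \|h\|_X$ controls the $t L^\infty$ part of the $Y$-norm, while Cauchy-Schwarz on the parabolic cylinders $P(x,\sqrt{s})$ controls the normalised $L^1$ part; the same reasoning gives $\|h\|_X^2$ for the purely quadratic piece $|\nabla h|^2$. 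The delicate linear-in-$h$ contribution $[A(u+h)-A(u)](\nabla u, \nabla u) \sim h \cdot |\nabla u|^2$ is bounded via
\[
\|h\cdot|\nabla u|^2\|_Y \lesssim \|h\|_{L^\infty}\cdot \||\nabla u|^2\|_Y,
\]
and it is exactly here that the smallness hypothesis $\||\nabla u|^2\|_Y < \varepsilon$ enters in order to make $\mathcal{T}$ a strict contraction; Lipschitz dependence of $A(\cdot)$ yields the corresponding estimates for differences $\mathcal{T}(h_1) - \mathcal{T}(h_2)$. The main obstacle in the whole scheme is precisely this linear-in-$h$ term: without a smallness assumption on $|\nabla u|^2$ in $Y$, no amount of shrinking the $X$-ball or the BMO-neighbourhood can close the estimate.

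Once a fixed point $h \in X$ is obtained (the constraint $u+h \in N$ being preserved by a standard tubular-neighbourhood argument that uses smallness of $h$ in $L^\infty$), the resulting $v = u + h$ solves the harmonic map flow with initial datum $v_0$. To promote $v$ to an expander, I would invoke the invariance of the equation and of the spaces $X_0, X$ under the parabolic scaling \eqref{scal-1}--\eqref{resc-cond}: since $v_0$ is $0$-homogeneous, the rescaled solution $v_\lambda(x,t) := v(\lambda x, \lambda^2 t)$ solves the same Cauchy problem and lies in the same small $X$-ball, hence coincides with $v$ by the uniqueness of the fixed point, so $v$ is self-similar. The uniqueness claim in the $X$-topology near $u$ is then a direct consequence of the contraction property itself.
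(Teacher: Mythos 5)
Your proposal follows the same Duhamel/contraction scheme in the Koch--Tataru framework of \cite{Wang-BMO} and \cite{Koc-Lam-Rou} as the paper: same operator $\mathcal{T}$, same space triple $X_0$, $X$, $Y$, and the same decomposition of $R$ in which the delicate linear-in-$h$ piece $h\cdot|\nabla u|^2$ is the one requiring the hypothesis $\||\nabla u|^2\|_Y<\varepsilon$. The one useful addition is your explicit scale-invariance argument (using the uniqueness of the fixed point and the $0$-homogeneity of $v_0$) to conclude that the solution $v$ is actually self-similar, a step the paper leaves implicit.
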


\begin{proof}
The proof follows closely the one in the paper \cite{Wang-BMO} which in turn is motivated by the paper \cite{Koc-Lam-Rou}.

First of all, let us fix some map $v_0\in BMO(\mathbb{R}^n,N)$  and define the map $T: X\rightarrow X$ as follows:
\begin{eqnarray*}
T(h):=k_t\ast (v_0-u_0)+\int_0^tk_{t-s}\ast R(u,\nabla u,h,\nabla h)(s)ds,
\end{eqnarray*}
where $R(u,\nabla u,h,\nabla h)$ is defined by (\ref{rema-term}) and where $k_t$ denotes the Euclidean heat kernel
\begin{eqnarray*}
k_t(x):=(4\pi t)^{-\frac{n}{2}}\exp{\left(-\frac{|x|^2}{4t}\right)},\quad t>0,\quad x\in\mathbb{R}^n.
\end{eqnarray*}

 \begin{claim}
 $T$ is well-defined and the following estimates holds:
 \begin{eqnarray*}
\|T(h)\|_{X}\leq c\left( \|v_0-u_0\|_{BMO}+\left[\||\nabla u|^2\|_{Y}+\|u\|_{X}\|h\|_{X}+\|h\|_{X}^2\right]\|h\|_{X}\right),
\end{eqnarray*}
for some uniform positive constant $c>0$. 
\end{claim}
Indeed, the estimate 
\[
\| k_t\ast (v_0-u_0)\|_{X}\le C \|v_0-u_0\|_{BMO}
\]
can be found at the beginning of section 2 in \cite{Wang-BMO} and it follows from Lemma $3.1$ in \cite{Wang-BMO} that
\[
\|\int_0^tk_{t-s}\ast R(u,\nabla u,h,\nabla h)(s)ds\|_{X} \le C \|R(u,\nabla u,h,\nabla h)\|_{Y}.
\]
Now a standard computation shows that 
\[
\|R(u,\nabla u,h,\nabla h)\|_{Y}\leq C\|u\|_{X}\|h\|_{X}^2+C\|h\|_{X}^3+C\||\nabla u|^2\|_{Y}\|h\|_{X}.
\]
Similarly one observes that 
\begin{claim}
 $T$ is a contraction on a sufficiently small ball around $u$ in $X$, i.e. there is some $q\in(0,1)$ such that 
 \begin{eqnarray*}
 \|T(h_1)-T(h_2)\|_{X}\leq q\|h_1-h_2\|_{X},
 \end{eqnarray*}
 for all $h_1$, $h_2$ in $B_{X}(u, \delta)$ if $\delta$ is small enough.
\end{claim}
Hence we obtain the desired solution $v=u+h$ from the Banach fixed point theorem. It was also shown in \cite{Wang-BMO}, Proof of Theorem $1.3$, that $v$ indeed maps into $N$.
\end{proof}

\section{Preliminaries}\label{sec-hom-Che-Str}
\subsection{Geometry of hypersurfaces of Euclidean space}
Let $(N^{m-1},g)$ be a closed Riemannian manifold which is isometrically embedded in Euclidean space $\R^m$. Let us denote the distance function to $N$ by $d_{N}$. Then there exists a tubular neighbourhood $T_{2\delta_N}(N):=\{y\in\R^m\,|\,d_N(y)<2\delta_N\}$ of $N$ such that the projection map $\Pi_N:T_{2\delta_N}(N)\rightarrow N$ is well-defined and smooth.
As in \cite{Chen-Struwe}, let $\chi$ be a smooth, non-decreasing function such that $\chi(s)=s$ for $0\leq s\leq \delta_N^2$ and $\chi(s)=2\cdot \delta_N^2$ if $s\geq (2\cdot \delta_N)^2$. 

We also recall the definition of the signed distance to $N$. Since $(N^{m-1},g)$ is closed and isometrically embedded in Euclidean space $\R^m$, there is a smooth function $\rho:\R^m\rightarrow \R$ such that 
\begin{equation*}
\begin{split}
&\{x \in\R^m\,|\, \rho(x)<0\}=:\Omega\quad \text{is an open bounded domain with smooth boundary $N$},\\
&\{x\in \R^m\,|\, \rho(x)=0\}=N,\quad\text{and $\nabla\rho(x)\neq 0,\quad x\in\partial\Omega=N$}.
\end{split}
\end{equation*}
Such a function $\rho$ is called a defining function for the domain $\Omega$ which is the "inside" of $N$, the set $\{x\in\R^m\,|\,\rho(x)>0 \}$ being the "outside" of $N$: see [Chapter $1$, Section $1.2$, \cite{Kra-Par}] for a systematic study. The signed distance of $N$ denoted by $\bar{d}_N$ is then defined by:
\begin{eqnarray}
\bar{d}_N(x):=\sgn(\rho(x))d_N(x),\quad x\in\R^m.
\end{eqnarray}
[Theorem $1.2.6$, \cite{Kra-Par}] ensures $\bar{d}_N$ is a smooth function on a tubular neighborhood $T_{2\delta_N}$ of $N$. As a final remark, we notice that $\bar{d}^2_N=d_N^2$.

Let us treat the case of a Euclidean sphere $N=\mathbb{S}^{m-1}\subset\R^m$. A defining function for the unit $m$-Euclidean ball $\mathbb{B}^m(0,1)$ centered at $0\in\R^m$ is $\rho(x)=|x|^2-1$ for $x\in\R^m$. The signed distance of $\mathbb{S}^{m-1}$ is then $\bar{d}_{\mathbb{S}^{m-1}}(x)=|x|-1$ and it is smooth on $\R^m\setminus \{0\}$.

 \subsection{Properties of the first approximation}\label{sec-first-app}

 Denote by $U_0(t)$ the caloric extension of the map $u_0$, i.e. 
 \begin{eqnarray*}
 U_0(x,t):=(k_t\ast u_0)(x),\quad (x,t)\in \R^n\times \R_+,
 \end{eqnarray*}
and denote by $U_0$ the map $U_0(\cdot,1)$. By construction, $U_0$ is $0$-homogeneous and hence
 \begin{eqnarray*}
U_0(x,t)=U_0\left(\frac{x}{\sqrt{t}},1\right)=U_0\left(\frac{x}{\sqrt{t}}\right),\quad \forall(x,t)\in \R^n\times \R_+.
 \end{eqnarray*}

\begin{lemma}\label{lemma-first-app-Chen-Str}
Let $u_0:\R^n\rightarrow(N,g)\subset\R^m$ be a Lipschitz $0$-homogenous map. Then the caloric extension $U_0$ of $u_0$ satisfies
\begin{eqnarray}
&&\|U_0\|_{L^{\infty}}\leq \|u_0\|_{L^{\infty}}, \\
&&\sup_{x\in\R^n}(1+|x|)|\nabla^kU_0|(x)\leq C(k,u_0),\quad \forall k\geq 1,\label{est-cal-1}\\
&&(1+|x|)d_N(U_0(x))\leq(1+|x|)|U_0(x)-u_0(x/|x|)|\leq C(u_0).\label{est-cal-2}
\end{eqnarray}
Moreover, if $u_0$ is in $C^2_{loc}(\R^n\setminus\{0\})$, one has the improved decay
\begin{eqnarray}
&&\sup_{x\in\R^n}(1+|x|^2)^{\frac{\min\{k,2\}}{2}}|\nabla^kU_0|(x)\leq C(u_0),\quad k \geq 1,\label{est-cal-3}\\
&&(1+|x|^2)d_N(U_0(x))\leq(1+|x|^2)|U_0(x)-u_0(x/|x|)|\leq C(u_0).\label{est-cal-4}
\end{eqnarray}
If $u_0$ is in $C^3_{loc}(\R^n\setminus\{0\})$, with $\bm{n\geq 4}$, one has 
\begin{eqnarray}
&&\sup_{x\in\R^n}(1+|x|^3)|\nabla^3U_0|(x)+\sup_{|x|\geq R(u_0)}(1+|x|^3)|\nabla(\bar{d}_N(U_0))|(x)\leq C(u_0),\label{est-cal-5}
\end{eqnarray}
for some positive radius $R(u_0)$.

Moreover, if $(u_0^{\sigma})_{\sigma\in[0,1]}$ is a path of $C^3$ maps such that $u_0^0=u_0$ and $u_0^1\equiv P\in N$ then the constants $(C(u_0^{\sigma}))_{\sigma\in[0,1]}$, $(R(u_0^{\sigma}))_{\sigma\in[0,1]}$ in (\ref{est-cal-2}), (\ref{est-cal-3}), (\ref{est-cal-4}) and (\ref{est-cal-5}) satisfy $$\lim_{\sigma\rightarrow 1}C(u_0^{\sigma})=0,\quad \sup_{\sigma\in[0,1]}R(u_0^{\sigma})<+\infty.$$ 
\end{lemma}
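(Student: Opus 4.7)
The plan is to exploit the heat-kernel representation $U_0=K_1*u_0$ together with the scaling bounds $|\nabla^k u_0(y)|\leq C/|y|^k$ (which follow from $0$-homogeneity plus $C^k_{loc}(\R^n\setminus\{0\})$ regularity), and to split every integral into a near region $\{|y-x|<|x|/2\}$, where Taylor expansions are effective and pointwise derivative bounds are of scale $|x|$, and a far region, where Gaussian decay of $K_1$ absorbs any polynomial factor. The $L^\infty$ bound follows immediately from $\int K_1=1$.

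For (\ref{est-cal-1}) and (\ref{est-cal-2}) in the Lipschitz case I would use the cancellation $\int\nabla^kK_1=0$ for $k\geq 1$ to write
\begin{equation*}
\nabla^k U_0(x)=\int_{\R^n}(\nabla^k K_1)(x-y)\bigl[u_0(y)-u_0(x/|x|)\bigr]\,dy;
\end{equation*}
on $|y|\geq|x|/2$, the $0$-homogeneity of $u_0$ together with the Lipschitz bound on $\mathbb{S}^{n-1}$ gives $|u_0(y)-u_0(x/|x|)|\leq C|y-x|/|x|$, and the finite first moment $\int|z||\nabla^kK_1(z)|\,dz<\infty$ yields the $O(1/|x|)$ decay; the complementary region is exponentially small since $|u_0|\leq M$ and $K_1(x-y)\lesssim e^{-|x|^2/16}$ there. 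Estimate (\ref{est-cal-2}) is the same computation without the extra derivative.

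The $C^3$ improvements (\ref{est-cal-3})--(\ref{est-cal-4}) use more regularity. For $k=2$ I would write $\nabla^2 U_0=K_1*\nabla^2 u_0$ with $|\nabla^2 u_0(y)|\leq C/|y|^2$: on $|y|\geq|x|/2$ the integrand is $\leq 4C/|x|^2$ and the kernel has unit mass, giving the required $O(1/|x|^2)$, while on the complementary region the Gaussian dominates the $|y|^{-2}$ singularity provided $n\geq 3$. For (\ref{est-cal-4}) I would Taylor expand
\begin{equation*}
u_0(y)=u_0(x)+\nabla u_0(x)(y-x)+\tfrac{1}{2}(y-x)^T\nabla^2 u_0(\xi)(y-x)
\end{equation*}
on $|y-x|<|x|/2$: the zeroth-order term cancels against $u_0(\hat x)=u_0(x)$ by $0$-homogeneity, the linear term integrates to zero against the even kernel $K_1$, and $|\nabla^2 u_0(\xi)|\leq C/|x|^2$ in that ball yields $|U_0(x)-u_0(\hat x)|=O(1/|x|^2)$; the tail is exponentially small.

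The delicate estimate is (\ref{est-cal-5}). At $x$ with $|x|$ large enough that $U_0(x)$ lies in the tubular neighborhood of $N$, I would write $\nabla(d_N(U_0))=(\nabla d_N)(U_0)\cdot\nabla U_0$. Since $(\nabla d_N)(U_0)$ is a unit normal to $N$ at $\Pi_N(U_0)$, its magnitude is controlled, up to $O(1/|x|^3)$ errors coming from the comparison of tangent spaces at $\Pi_N(U_0)$ and at $u_0(\hat x)$, by $|P^\perp_{u_0(\hat x)}\nabla U_0(x)|$, where $P^\perp_p$ denotes projection onto $(T_pN)^\perp$. The crucial observation is that $P^\perp_{u_0(\hat x)}\nabla u_0(x)=0$, since $\nabla u_0(x)\in T_{u_0(x)}N=T_{u_0(\hat x)}N$ by $0$-homogeneity. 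A Taylor expansion then gives
\begin{equation*}
P^\perp_{u_0(\hat x)}\nabla u_0(y)=P^\perp_{u_0(\hat x)}\nabla^2 u_0(x)(y-x)+O\bigl(|\nabla^3 u_0(\xi)|\,|y-x|^2\bigr);
\end{equation*}
the linear term vanishes against $K_1$ by symmetry, while on $|y-x|<|x|/2$ the quadratic remainder is of order $|y-x|^2/|x|^3$, contributing the claimed $O(1/|x|^3)$. The restriction $n\geq 4$ ensures that $|\nabla^3 u_0(y)|\sim|y|^{-3}$ is locally integrable near the origin, allowing one to differentiate $U_0=K_1*u_0$ three times inside the convolution. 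Finally, the $\sigma$-continuity $\lim_{\sigma\to 1}C(u_0^\sigma)=0$ follows by inspection: each $C(u_0^\sigma)$ is a continuous functional of $u_0^\sigma$ in the relevant $C^3(\mathbb{S}^{n-1})$ norm, and at $u_0^1\equiv P$ the differences $u_0^\sigma-u_0^\sigma(\hat x)$ and derivatives $\nabla^k u_0^\sigma$ vanish identically. The main obstacle is (\ref{est-cal-5}): recognizing the tangentiality $\nabla u_0(x)\in T_{u_0(\hat x)}N$ and exploiting it to extract one extra power of $|x|^{-1}$ beyond the naive bound $|\nabla U_0|=O(1/|x|)$.
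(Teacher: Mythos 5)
Your proof is correct but takes a genuinely different route from the paper's. The paper's key device for the $C^0$ decay of $U_0-u_0(\hat x)$ (estimates (\ref{est-cal-2}) and (\ref{est-cal-4})) and for (\ref{est-cal-5}) is the static form of the heat equation: since the caloric extension is $0$-homogeneous as a parabolic scale-invariant function, $\Delta_f U_0=0$, hence $\tfrac{r}{2}\partial_r U_0=-\Delta U_0$, and one integrates this identity along radial rays, feeding in the derivative decay $\nabla^2 U_0=O(|x|^{-2})$ (and $\nabla^3 U_0=O(|x|^{-3})$ for (\ref{est-cal-5})). You instead work directly under the convolution $U_0=K_1*u_0$, split the domain into a near ball $\{|y-x|<|x|/2\}$ and a Gaussian tail, Taylor-expand $u_0$ (resp. $\nabla u_0$) around $x$, and exploit the moment cancellation $\int K_1(z)\,z\,dz=0$ to kill the linear term. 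Both proofs ultimately need the same geometric input for (\ref{est-cal-5}) — namely that $\nabla u_0(x)\in T_{u_0(\hat x)}N$ so that the normal projection annihilates the leading term — but you make this observation explicit, whereas the paper folds it into an unelaborated ``analogous argument.'' What the paper's route buys is uniformity and a clean reduction to successive derivative bounds; what yours buys is a more elementary and self-contained estimate (no recourse to $\Delta_f U_0=0$) and an explicit exhibition of the cancellation mechanism.

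One small inconsistency in your write-up: you justify the restriction $n\ge 4$ in (\ref{est-cal-5}) by invoking local integrability of $\nabla^3 u_0\sim |y|^{-3}$ at the origin ``allowing one to differentiate $U_0=K_1*u_0$ three times inside the convolution.'' But your own argument never differentiates the convolution three times — it differentiates once ($\nabla U_0=K_1*\nabla u_0$), and then Taylor-expands $\nabla u_0$ to second order only locally near $x$, where $|\nabla^3 u_0(\xi)|=O(|x|^{-3})$ regardless of the behavior at the origin. As written, your near/far decomposition plus moment cancellation goes through for $n\ge 3$; the $n\ge 4$ hypothesis is tied to the paper's chosen route of bounding $\nabla^3 U_0=K_1*\nabla^3 u_0$ globally (and to their use of the barycentric approximation $U_0^b$ for $n=3$). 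You should either drop that parenthetical justification or note explicitly that your estimate does not require it.
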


\begin{proof}
The first bound on the $C^0$ norm of $U_0$ follows easily from the maximum principle applied to the heat equation or by using the explicit formula in terms of the Euclidean heat kernel. The decay (\ref{est-cal-1}) on the first derivatives of $U_0$ is proved by using the corresponding decay of the derivatives of the initial condition $u_0$. The bound \eqref{est-cal-2} uses the heat equation in its static form. Since $U_0(t)$ is $0$-homogeneous as a time dependent function then the map $U_0(\cdot,1)=U_0$ satisfies $\Delta_fU_0=0$, i.e.
\begin{eqnarray*}
\frac{r}{2}\partial_rU_0=-\Delta U_0=\textit{O}((1+r)^{-1}),
\end{eqnarray*}
thanks to the previous estimate (\ref{est-cal-1}) on the second derivatives of $U_0$. In particular, this implies that
$$U_0(x)=u_0(x/|x|)+\textit{O}((1+|x|)^{-1}),$$ as $x$ tends to $+\infty$.

Therefore, $d_N(U_0(x))\le |U_0(x)-u_0(x/|x|)|=\textit{O}((1+|x|)^{-1}).$\\

Now, if $u_0$ is in $C^2_{loc}(\R^n\setminus\{0\})$, the decays (\ref{est-cal-3}) on the first and the second derivatives of $U_0$ are proved by using the corresponding decay of the derivatives of the initial data $u_0$. Notice that if $k=0,1, 2$ then $\nabla^k u_0\in L^1_{loc}(\R^n)$, if $n\geq 3$. Let us prove (\ref{est-cal-3}) for the first derivative $\nabla^3U_0$ for instance. Since $u_0$ is not assumed to be in $C^3_{loc}(\R^n\setminus\{0\})$, one derivates twice the map $u_0$ and once the heat kernel $k_1$ at time $t=1$ in the integral representation formula of the caloric extension $U_0$ to get:
\begin{eqnarray*}
|\nabla^3 U_0|(x)&\leq& C_1\int_{\R^n}|x-y|e^{-\frac{|x-y|^2}{4}}\frac{dy}{|y|^2}\leq C_2\int_{\R^n}e^{-\frac{|x-y|^2}{8}}\frac{dy}{|y|^2}=C_2\int_{\R^n}e^{-\frac{|y|^2}{8}}\frac{dy}{|x-y|^2}\\
&\leq&C_2\left(\int_{|y|\leq 2^{-1}|x|}e^{-\frac{|y|^2}{8}}\frac{dy}{|x-y|^2}+\int_{2^{-1}|x|\leq|y|\leq 2|x|}e^{-\frac{|y|^2}{8}}\frac{dy}{|x-y|^2}\right)\\
&&+C_2\int_{|y|\geq 2|x|}e^{-\frac{|y|^2}{8}}\frac{dy}{|x-y|^2}.
\end{eqnarray*}
The first integral on the righthand side of the previous inequality can be estimated as follows for some $x\in\R^n\setminus\{0\}$:
\begin{eqnarray*}
\int_{|y|\leq 2^{-1}|x|}e^{-\frac{|y|^2}{8}}\frac{dy}{|x-y|^2}\leq\frac{4}{|x|^2} \int_{|y|\leq 2^{-1}|x|}e^{-\frac{|y|^2}{8}}dy\leq \frac{C}{|x|^2},
\end{eqnarray*}
by the triangular inequality $|x-y|\geq |x|-|y|\geq 2^{-1}|x|$ if $y\in B(0,2^{-1}|x|)$. 
 
 The second one is handled as follows:
 \begin{eqnarray*}
\int_{2^{-1}|x|\leq|y|\leq 2|x|}e^{-\frac{|y|^2}{8}}\frac{dy}{|x-y|^2}&\leq& e^{-\frac{|x|^2}{32}}\int_{2^{-1}|x|\leq|y|\leq 2|x|}\frac{dy}{|x-y|^2}\leq e^{-\frac{|x|^2}{32}}\int_{|x-y|\leq 3|x|}\frac{dy}{|x-y|^2}\\
&\leq&C|x|^{n-2}e^{-\frac{|x|^2}{32}},
\end{eqnarray*}
where $C$ is a positive constant uniform in $x\in\R^n$ since $|x|^{-2}\in L^1_{loc}(\R^n)$.

Finally, the third integral can be estimated similarly.

We proceed similarly to (\ref{est-cal-2}) to prove (\ref{est-cal-4}), since $U_0(t)$ is $0$-homogeneous as a time dependent function then the map $U_0$ satisfies $\Delta_fU_0=0$, i.e.
\begin{eqnarray}
\frac{r}{2}\partial_rU_0=-\Delta U_0=\textit{O}((1+r^2)^{-1}),\label{evo-eqn-U_0-static}
\end{eqnarray}
thanks to the previous estimate on the second derivatives of $U_0$. In particular, this implies that
$$U_0(x)=u_0(x/|x|)+\textit{O}((1+|x|^2)^{-1}),$$ as $x$ tends to $+\infty$ and therefore $\bar{d}_N(U_0(x))=\textit{O}((1+|x|^2)^{-1})$ as $x$ tends to $+\infty$.

It remains to prove (\ref{est-cal-5}). The first estimate on the third derivatives can be proved as we proceeded for (\ref{est-cal-3}). Let us handle the second term on the lefthand side of (\ref{est-cal-5}).

 First of all, since the function $\bar{d}_N$ is differentiable on the set $T_{2\delta_N}(N)$, estimate (\ref{est-cal-4}) ensures that $d_N(U_0)<2\delta_N$ as soon as $|x|\geq R(u_0)$. Now observe that:
\begin{eqnarray*}
\left|\nabla \bar{d}_N(U_0(x))-\nabla\bar{d}_N(u_0(x/|x|))\right|\leq C|U_0(x)-u_0(x/|x|)|\leq \frac{C(u_0)}{1+|x|^2},
\end{eqnarray*}
by (\ref{est-cal-4}).
Therefore we get the following intermediate estimate for $x\in\R^n\setminus\{0\}$:
\begin{eqnarray*}
|\nabla(\bar{d}_N(U_0))|(x)&=&|\nabla\bar{d}_N(U_0)(\nabla U_0)|(x)\\
&\leq&|\nabla\bar{d}_N(u_0)(\nabla U_0)|(x)+\frac{C(u_0)}{1+|x|^2}|\nabla U_0|\\
&\leq&|\nabla\bar{d}_N(u_0)(\nabla U_0)|(x)+\frac{C(u_0)}{1+|x|^3}\\
&\leq&|\nabla\bar{d}_N(u_0)(\nabla u_0)|(x)+C(u_0)|\nabla( U_0-u_0)|(x)+\frac{C(u_0)}{1+|x|^3}\\
&=&C(u_0)|\nabla( U_0-u_0)|(x)+\frac{C(u_0)}{1+|x|^3},
\end{eqnarray*}
where we use (\ref{est-cal-3}) with $k=1$ in the third line and $\bar{d}_N(u_0)=0$ in the last line. To conclude, it suffices to show that $\nabla(U_0-u_0)(x)=\textit{O}((1+|x|^3)^{-1}).$

Let us remark that the radial derivative of $U_0$ hence the radial derivative of $U_0-u_0$ is decaying as expected by (\ref{evo-eqn-U_0-static}). Now differentiate (\ref{evo-eqn-U_0-static}) once and observe that the hessian of $|x|^2$ is $2\delta$ where $\delta$ denotes Euclidean metric. One gets:
\begin{eqnarray*}
-\Delta\nabla U_0&=&\frac{r}{2}\partial_rU_0+\frac{1}{2}\nabla U_0\\
&=&\frac{1}{2}\partial_r\left(r\nabla U_0\right).
\end{eqnarray*}
Estimate (\ref{est-cal-5}) on the third derivatives then implies that $$\partial_r\left(r\nabla U_0\right)=\textit{O}((1+|x|^3)^{-1}).$$
Integrating the previous differential equality along a ray gives:
\begin{eqnarray*}
r\nabla U_0(r,\omega)-\nabla^{sph} u_0(\omega)=\textit{O}((1+r)^{-2}),\quad (r,\omega)\in\R_+\times\mathbb{S}^{n-1},
\end{eqnarray*}
where $\nabla^{sph} u_0$ denotes the spherical derivatives of $u_0$. This fact leads to the expected estimate.
\end{proof}

The previous Lemma \ref{lemma-first-app-Chen-Str} and the analysis to follow in the next sections show that if $n\geq 4$, the choice of the caloric extension of $u_0$ as a first approximation suffices. If $n=3$, we consider a barycentric approximation of $u_0$ as follows: if $\eta:\R^n\rightarrow[0,1]$ denotes any smooth function such that $\eta\equiv 1$ if $|x|\geq 2$ and $\eta\equiv 0$ if $|x|\leq 1$, we define 
\begin{eqnarray}
U_0^b:=(1-\eta)P+\eta u_0,\label{sec-bar-app}
\end{eqnarray}
where $P\in N$ is fixed.

The properties of $U^b_0$ can be summarized as follows:
\begin{lemma}\label{lemma-bar-app}
With the previous notations, assume that $u_0$ is in $C^3_{loc}(\R^n\setminus\{0\})$, then the barycentric approximation satisfies
\begin{eqnarray}
&&\|U^b_0\|_{L^{\infty}}\leq |P|+\|u_0\|_{L^{\infty}},\quad \forall x\in\R^n,\quad U^b_0=u_0,\quad |x|\geq 2,\label{est-bar-app-1}\\
&&\sup_{x\in\R^n}(1+|x|^{k})|\nabla^kU^b_0|(x)\leq C(k,u_0)<+\infty,\quad 0\leq k\leq 3,\label{est-bar-app-2}\\
&&\sup_{x\in\R^n}(1+|x|^{2})|\Delta_fU^b_0|\leq C(u_0)<+\infty,\label{est-bar-app-3}\\
&&\sup_{x\in\R^n}(1+|x|^2)d_N(U_0^b(x))\leq C(u_0),\label{est-bar-app-4}
\end{eqnarray}
where $|P|$ denotes the euclidean norm of $P\in\R^m$. In particular, $\bar{d}_N(U_0^b)(x)=0$ if $|x|\geq 2$.

Moreover, if $(u_0^{\sigma})_{\sigma\in[0,1]}$ is a path of $C^3$ maps such that $u_0^0=u_0$ and $u_0^1\equiv P\in N$ then the constants $(C(u_0^{\sigma}))_{\sigma\in[0,1]}$ in (\ref{est-bar-app-2}), (\ref{est-bar-app-3}) and (\ref{est-bar-app-4}) satisfy $\lim_{\sigma\rightarrow 1}C(u_0^{\sigma})=0.$ 

\end{lemma}

\begin{proof}
By definition of the map $U_0^b$, $U_0^b\equiv u_0$ outside $B(0,2)\subset\R^n$. Moreover, by the triangular inequality: $|U_0^b|(x)\leq (1-\eta(x))|P|+\eta(x)\|u_0\|_{L^{\infty}}$ Since $\eta$ takes its values into $[0,1]$, (\ref{est-bar-app-1}) follows. The estimate (\ref{est-bar-app-2}) comes from the fact that $\nabla \eta$ is compactly supported in $B(0,2)$ and the decay of the derivatives of $u_0$ outside the origin $0\in\R^n$. 

Now since $u_0$ is $0$-homogeneous, $\Delta_fU^b_0=\Delta_f u_0=\Delta u_0$ on $\R^n\setminus B(0,3)$ which decays quadratically. Therefore, (\ref{est-bar-app-3}) follows immediately. 

Similarly to the proof of (\ref{est-cal-4}), one observes that:
\begin{eqnarray*}
(1+|x|^2)d_N(U_0^b(x))&\leq& (1+|x|^2)|U_0^b(x)-u_0(x/|x|)|\\
&\leq&(1+|x|^2)(1-\eta(x))|P-u_0(x/|x|)|,
\end{eqnarray*}
for any $x\in \R^n$. In particular, if $|x|\geq 2$ then the righthand side of the previous inequality vanishes. Therefore,
\begin{eqnarray*}
(1+|x|^2)d_N(U_0^b(x))&\leq&5\sup_{|x|\leq 2}|P-u_0(x/|x|)|,
\end{eqnarray*}
which implies (\ref{est-bar-app-4}).
\end{proof}

Because of the similarities shared by Lemma \ref{lemma-first-app-Chen-Str} with Lemma \ref{lemma-bar-app}, we will only consider the ansatz with the caloric extension in the next sections. 

\subsection{Fixed point formulation}\label{Section-Fixed-point-formulation}

Motivated by Lemmata \ref{lemma-first-app-Chen-Str} and \ref{lemma-bar-app}, we introduce the Banach space
 \begin{eqnarray*}
X:=\left\{V\in C^1_{loc}(\R^n,\R^m)\quad|\quad \sup_{x\in\R^n}(1+|x|)^{2}|V(x)|+(1+|x|)^{3}|\nabla V(x)|<+\infty\right\}.
\end{eqnarray*}
In order to produce a solution to (\ref{Chen-Str-equ}), we look for a solution $u_K$ to the homogeneous Chen-Struwe equation of the form
\begin{eqnarray}
\partial_tu_K&=&\Delta u_K-\frac{K}{t}\chi'\left( d^2_{N}(u_K)\right)\nabla\left( \frac{d^2_{N}}{2}\right)(u_K),\quad t>0,\label{Chen-Str-equ-non-stat}\\
u_K(x,t)&:=&U_0\left(\frac{x}{\sqrt{t}}\right)+V_K\left(\frac{x}{\sqrt{t}}\right),
\end{eqnarray}
where $U_0:\R^n\rightarrow \R^m$ denotes the caloric extension of $u_0$ as defined in Section \ref{sec-first-app} and where $V_K\in X$. Notice that the time-dependent map $V_K(\cdot,t):=V_K(\cdot/\sqrt{t})$ will satisfy the bounds
\begin{eqnarray}
(\sqrt{t}+|x|)^{2}|V_K(x,t)|+(\sqrt{t}+|x|)^{3}|\nabla V_K(x,t)|\leq Ct,\quad \forall x\in\R^n,\quad \forall t>0.\label{time-bd-error}
\end{eqnarray}

As we want to use the Leray-Schauder Fixed Point Theorem in order to show Theorem \ref{Chen-Str-app}, we need to reformulate this problem as follows. Let $\sigma\in[0,1]$ be a parameter and denote by $(u_0^{\sigma})_{\sigma\in[0,1]}$ a path of $C^3$ maps such that $u_0^0=u_0$ and $u_0^1\equiv P\in N$. Note that this path is chosen inside the homotopy class of $[u_0]\in\pi_{n-1}(N)$. \\

Thus, solving (\ref{Chen-Str-equ}) amounts to solving the static Chen-Struwe equation
\begin{eqnarray}
&&\Delta_fV_K-K\chi'\left( d^2_{N}(U_0^{\sigma}+V_K)\right)\nabla\left( \frac{d^2_N}{2}\right)(U_0^{\sigma}+V_K)=0\label{Chen-Str-equ-stat},\quad V_K\in X.
\end{eqnarray}

If $V\in X$, $K>0$ and $\sigma\in[0,1]$, we denote formally by $F_K^{\sigma}(V)\in X$ the solution to the problem

\begin{eqnarray*}
\Delta_fF_K^{\sigma}(V)-K\chi'(d^2_{N}(U_0^{\sigma}))d_{U_0^{\sigma}}\bar{d}_N(F_K^{\sigma}(V))\nabla \bar{d}_N(U_0^{\sigma})&=&-K\chi'(d^2_{N}(U_0^{\sigma}))d_{U_0^{\sigma}}\bar{d}_N(V)\nabla \bar{d}_N(U_0^{\sigma})\\
&&+K\chi'\left( d^2_{N}(U_0^{\sigma}+V)\right)\nabla\left( \frac{d^2_{N}}{2}\right)(U_0^{\sigma}+V),\label{Chen-Str-equ-expander}
\end{eqnarray*}
where $d_{p}\bar{d}_N(v)=<\nabla \bar{d}_N(p),v>$ denotes the differential (when it makes sense) of the signed distance function $\bar{d}_N$ at the point $p\in\R^m$ evaluated at the vector $v\in T_p\R^m$.

\begin{rk}\label{rk-cpct-op}
The reason why we isolate the term $d_{U_0^{\sigma}}\bar{d}_N(F_K^{\sigma}(V))\nabla \bar{d}_N(U_0^{\sigma})$ from the right hand side is that the map $$V\in X\rightarrow d_{U_0^{\sigma}}\bar{d}_N(F_K^{\sigma}(V))\nabla \bar{d}_N(U_0^{\sigma})\in X,$$ is not a compact operator.
\end{rk}

We proceed in three steps:
\begin{enumerate}
\item The map $F_K:X\times[0,1]\rightarrow X$ is a well-defined compact continuous map: this is the content of Section \ref{section-well-def-2}.\\
\item The Leray-Schauder degree of $I-F_K^{\sigma}:B_{X}(0,\varepsilon)\rightarrow B_{X}(0,\varepsilon)$ is $1$ when $\sigma$ is close to $1$, for some positive $\varepsilon$: this is proved in Section \ref{section-CS-well-posed}.\\
\item (A priori estimates) There is a positive constant $M$ (uniform in $\sigma\in[0,1]$) such that if $V\in X$ is such that $F_K^{\sigma}(V)=V$ then $\|V\|_{X}\leq M$: these are the contents of Sections \ref{sec-varespilon-reg-thm} and \ref{sec-a-priori-chen-struwe-expanders}.
\end{enumerate}

\subsection{$F_K$ is a well-defined compact and continuous map}\label{section-well-def-2}
In this section, we prove that the map $F_K^{\sigma}:X\to X$ is compact and continuous.

Note that the map $F_K^{\sigma}$ can also be formally interpreted as 
\begin{eqnarray*}
F_K^{\sigma}(V)(x,t)=-K\int_0^t\int_{\R^n}\mathcal{K}^{\sigma}_{t-s}(x,y)\frac{1}{s}\chi'(d^2_{N}(U_0^{\sigma}))d_{U_0^{\sigma}}\bar{d}_N(V)\nabla \bar{d}_N(U_0^{\sigma})dyds\\
+K\int_0^t\int_{\R^n}\mathcal{K}^{\sigma}_{t-s}(x,y)\frac{1}{s}\chi'\left( d^2_{N}(U_0^{\sigma}+V)\right)\nabla\left( \frac{d^2_{N}}{2}\right)(U_0^{\sigma}+V))(y,s)dyds,
\end{eqnarray*}
where $V(x,t):=V(x/\sqrt{t})$ with $V\in X$ and where $\mathcal{K}^{\sigma}_{t}\in\mathcal{L}(X,X)$ denotes the solution of
\begin{eqnarray*}
\partial_t\mathcal{K}^{\sigma}_{t}&=&\Delta \mathcal{K}^{\sigma}_{t}-\frac{K}{t}\chi'(d^2_{N}(U_0^{\sigma}))d_{U_0^{\sigma}}\bar{d}_N(\mathcal{K}^{\sigma}_t)\nabla \bar{d}_N(U_0^{\sigma}),\\
\lim_{t\rightarrow 0^+}\mathcal{K}^{\sigma}_{t}&=&\delta_0.
\end{eqnarray*}
The issue here is to make sense of this solution and therefore, we prefer to work with the static equation only. To do so, given $V\in X$, we first solve the following Dirichlet problem 

\begin{equation}
\begin{split}
 \Delta_fW_R-K\chi'(d^2_{N}(U_0^{\sigma}))d_{U_0^{\sigma}}\bar{d}_N(W_R)\nabla \bar{d}_N(U_0^{\sigma})&=Q(U_0^{\sigma},V),\quad\text{ on } B(0,R)\subset \R^n,\\
 W_R&=0,\quad  \text{ on } \partial B(0,R),
 \end{split}
\end{equation}
where
\begin{equation}
\begin{split}
Q(U_0^{\sigma},V)&:=-K\chi'(d^2_{N}(U_0^{\sigma}))d_{U_0^{\sigma}}\bar{d}_N(V)\nabla \bar{d}_N(U_0^{\sigma})\label{def-Q-1}\\
&+K\chi'\left( d^2_{N}(U_0^{\sigma}+V)\right)\nabla\left( \frac{d^2_{N}}{2}\right)(U_0^{\sigma}+V).
\end{split}
\end{equation}
 One can prove that such a solution $W_R$ exists and is unique by the maximum principle. 
 
We start with a lemma analysing the behaviour of the righthand side $Q(U_0^{\sigma},V)$ for $V\in X$.
\begin{lemma}\label{est-Q-lemma}
Let $\sigma\in[0,1]$. Then
\begin{equation}
\begin{split}
\|Q(U_0^{\sigma},V)\|_X&\leq C(u_0^{\sigma},K)(1+\|V\|_X)+C(K)\|V\|_X^2,\quad V\in B_X(0,1),\label{est-Q-1}\\
\|Q(U_0^{\sigma},V_2)-Q(U_0^{\sigma},V_1)\|_X&\leq C(u_0^{\sigma},K)\|V_2-V_1\|_X+C(K)\|V_1+V_2\|_X\|V_2-V_1\|_X,  
\end{split}
\end{equation}
where $V_1,V_2\in B_X(0,1)$ and where the constant $C(u_0^{\sigma},K)$ satisfies $\lim_{\sigma\rightarrow 1}C(u_0^{\sigma},K)=0$ when $K$ is fixed.
\end{lemma} 

\begin{proof}
Since $K$ is fixed and the expected estimates are depending on $K$ a priori, we can assume $K=1$ to lighten the notations. 

A Taylor expansion of degree $2$ around $U_0^{\sigma}$ of the righthand side of (\ref{def-Q-1}) gives:
\begin{equation}
\begin{split}
\chi'\left( d^2_{N}(U_0^{\sigma}+V)\right)\nabla\left( \frac{d^2_{N}}{2}\right)(U_0^{\sigma}+V)&=\chi'(d_N^2(U_0^{\sigma}))\nabla\left(\frac{d_N^2}{2}\right)(U_0^{\sigma})\\
&+\chi''(d_N^2(U_0^{\sigma}))d_{U_0^{\sigma}}d_N^2(V)\nabla\left(\frac{d_N^2}{2}\right)(U_0^{\sigma})\\
&+\chi'(d_N^2(U_0^{\sigma}))\nabla^2_{U_0^{\sigma}}\left(\frac{d_N^2}{2}\right)(V)+V\ast V.\label{tay-exp-Q-data}
\end{split}
\end{equation}
where, if $A$ and $B$ are two tensors, $A\ast B$ denotes any contraction of linear combinations of the tensor product $A\otimes B$.
Therefore, by using that 
\begin{eqnarray*}
\chi'(d_N^2(U_0^{\sigma}))\nabla^2\left(\frac{d_N^2}{2}\right)(V)=\chi'(d_N^2(U_0^{\sigma}))\left(d_{U_0^{\sigma}}\bar{d}_N(V)\nabla\bar{d}_N(U_0^{\sigma})+\bar{d}_N(U_0^{\sigma})\nabla^2_{U_0^{\sigma}}\bar{d}_N(V)\right),
\end{eqnarray*}
one gets,
\begin{eqnarray}
Q(U_0^{\sigma},V)=\chi'(d_N^2(U_0^{\sigma}))\bar{d}_N(U_0^{\sigma})\nabla \bar{d}_N(U_0^{\sigma})+\bar{d}_N(U_0^{\sigma})\textit{O}(V)+V\ast V, \label{est-Q-inf}
\end{eqnarray}
 By using Lemma \ref{lemma-first-app-Chen-Str} together with (\ref{est-Q-inf}), one gets:
\begin{eqnarray*}
\sup_{x\in\R^n}(1+|x|^2)|Q(U_0^{\sigma},V)|(x)\leq C(u_0^{\sigma})\left(1+\sup_{x\in\R^n}(1+|x|^2)|V|(x)\right)+C\left(\sup_{x\in\R^n}(1+|x|^2)|V|(x)\right)^2,
\end{eqnarray*}
if $V\in B_X(0,1)$. By differentiating (\ref{est-Q-inf}) and by using Lemma \ref{lemma-first-app-Chen-Str} once more, one gets the first line of (\ref{est-Q-1}). The estimate on the second line of (\ref{est-Q-1}) can be proved similarly.
\end{proof}
In order to obtain a solution defined on all of $\R^n$, we first establish an a priori $C^0$ bound.

\begin{prop}\label{propa-priori-weighted-diri}
The solution $W_R$ defined above satisfies the following a priori weighted $C^0$ bound:
\begin{eqnarray}
\max_{B(0,R)}f|W_R|\leq C(n,m,K)\|Q(U_0^{\sigma},V)\|_{X}.\label{a-priori-weighted-diri}
\end{eqnarray}
In particular, $$\|W_R\|_{C^{2,\beta}(B(0,R_0))}\leq C(\beta,n,m,R_0,K)\|Q(U_0^{\sigma},V)\|_{X},\quad \forall R_0<R,\quad\beta\in(0,1).$$
\end{prop}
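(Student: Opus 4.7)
The plan is to reduce the vector-valued Dirichlet problem for $W_R$ to a scalar differential inequality via Kato's inequality, and then to defeat the slowest-decaying terms using an explicit radial barrier built from the potential $f$.

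First, I take the inner product of the equation for $W_R$ with $W_R/|W_R|$ and apply the standard Kato inequality $|\nabla W_R|^2 \geq |\nabla |W_R||^2$ (made rigorous by differentiating $\sqrt{|W_R|^2+\varepsilon^2}$ and letting $\varepsilon\downarrow 0$). This yields, in the weak sense on $B(0,R)$,
\[
\Delta_f|W_R| \;\geq\; \frac{\langle Q(U_0^{\sigma},V),W_R\rangle}{|W_R|} + K\chi'(d_N^2(U_0^{\sigma}))\frac{\langle \nabla d_N(U_0^{\sigma}),W_R\rangle^2}{|W_R|} \;\geq\; -|Q(U_0^{\sigma},V)|,
\]
the crucial point being that the penalisation term is nonnegative (thanks to $\chi'\geq 0$) and can be discarded.

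Second, I construct an explicit radial barrier. Using $\Delta f = n/2$ and $|\nabla f|^2 = f-n/2$, a direct computation gives
\[
\Delta_f\!\big(1/f\big) \;=\; -\frac{1}{f} + \frac{2}{f^2} - \frac{n}{f^3} \;=\; -\frac{(f-1)^2+(n-1)}{f^3}.
\]
The function $t\mapsto\big((t-1)^2+n-1\big)/t^2$ attains its global minimum $(n-1)/n$ at $t=n$, so $\Delta_f(1/f)\leq -(n-1)/(nf)$ throughout $\R^n$. A short calculation shows $f(x)\geq (1+|x|)^2/8$ for all $x$, and by the definition of $\|\cdot\|_X$ we have $|Q(U_0^{\sigma},V)|(x)\leq \|Q(U_0^{\sigma},V)\|_X\,(1+|x|)^{-2}\leq 8\,\|Q(U_0^{\sigma},V)\|_X/f(x)$. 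Consequently, the function $h:=A/f$, with $A := 8n(n-1)^{-1}\|Q(U_0^{\sigma},V)\|_X$, is a classical supersolution: $\Delta_f h\leq -|Q(U_0^{\sigma},V)|$ on $\R^n$.

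Combining the two steps, the scalar function $g := |W_R|-h$ is $\Delta_f$-subharmonic on $B(0,R)$ in the weak sense, while on $\partial B(0,R)$ one has $g=-h\leq 0$ by the Dirichlet condition. The weak maximum principle for the drift Laplacian then gives $g\leq 0$ throughout $B(0,R)$, which is exactly the weighted estimate $\max_{B(0,R)}f|W_R|\leq C(n)\|Q(U_0^{\sigma},V)\|_X$. The interior $C^{2,\beta}$ bound on $B(0,R_0)$ follows by inserting this $L^{\infty}$ control into standard interior Schauder estimates for the linear elliptic equation satisfied by $W_R$, whose coefficients are smooth with bounds depending only on $n,m,R_0,K$ and the regularity of $u_0^{\sigma}$ supplied by Lemma \ref{lemma-first-app-Chen-Str}.

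The delicate point is the calibration of the barrier: since $|Q|$ decays at the borderline rate $1/f$, the supersolution must decay at exactly the same rate, which is why the correct ansatz is $A/f$ and why the negative leading coefficient $-1/f$ in the expansion of $\Delta_f(1/f)$ is essential; any faster-decaying barrier of the form $Af^{-\alpha}$ with $\alpha>1$ would produce $\Delta_f(Af^{-\alpha})$ of size $\alpha A/f^{\alpha}$ at infinity, which cannot absorb $|Q|\sim 1/f$.
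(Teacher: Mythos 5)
Your argument is essentially the paper's: both reduce the vector equation to the scalar differential inequality $\Delta_f|W_R|\geq -|Q|$ (via Kato/the $\sqrt{|W_R|^2+\varepsilon^2}$ regularisation, discarding the nonnegative penalisation term since $\chi'\geq 0$), then compare against the barrier $A/f$ using $\Delta_f(f^{-1})\leq -c\,f^{-1}$ and the Dirichlet boundary data, and finish with interior Schauder for the $C^{2,\beta}$ bound. The only difference is cosmetic: you track the barrier constant explicitly (pinning $\inf g(f)=(n-1)/n$ at $f=n$ and giving $A$ in closed form), whereas the paper simply asserts $\inf(1-|x|^2/(2f^2))>0$, which is equivalent since $2f-n=|x|^2/2$ turns your expression $-1/f+2/f^2-n/f^3$ into the paper's $-f^{-1}(1-|x|^2/(2f^2))$.
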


\begin{proof}
Since $Q:=Q(U_0^{\sigma},V)$ is $C^1$, the last assertion on the derivatives of $W_R$ follows immediately by interior elliptic Schauder estimates in case the a priori $C^0$ bound holds. Therefore, it suffices to establish (\ref{a-priori-weighted-diri}). For this we note that
\begin{eqnarray*}
\Delta_f|W_R|^2&\geq& 2|\nabla W_R|^2+2K\chi'(d^2_{N}(U_0^{\sigma}))\left(d_{U_0^{\sigma}}\bar{d}_N(W_R)\right)^2-2|Q||W_R|\\
&\geq&2|\nabla W_R|^2-2|Q||W_R|.
\end{eqnarray*}
In particular, for $\varepsilon>0$ we consider the function $W_R^{\varepsilon}:=\sqrt{|W_R|^2+\varepsilon^2}$ and we get that $W_R^{\varepsilon}$ satisfies
\begin{eqnarray*}
\Delta_f|W_R^{\varepsilon}|\geq-|Q|\geq -\|Q\|_Xf^{-1},
\end{eqnarray*}
since $Q\in X$. Now observe that the function $f^{-1}$ is a good barrier function since
\begin{eqnarray*}
\Delta_ff^{-1}&=&-f^{-2}\Delta_ff+2|\nabla f|^2f^{-3}\\
&=&-f^{-1}\left(1-2f^{-2}\frac{|x|^2}{4}\right)\\
&\leq&-Cf^{-1},
\end{eqnarray*}
for some positive constant $C$. Indeed, one can check that for $n\geq 2$ we have
$$\inf_{x\in\R^n} \left(1-2\frac{|x|^2}{4f^2(x)}\right)>0.$$

Hence, for some sufficiently large constant $A$ depending linearly on $\|Q\|_{X}$ and which is independent of $\varepsilon$,
\begin{eqnarray*}
\Delta_f\left(|W_R^{\varepsilon}|-Af^{-1}\right)>0,\quad\text{on}\quad B(0,R).
\end{eqnarray*}
The maximum principle forces the function $|W_R^{\varepsilon}|-Af^{-1}$ to attain its maximum at the boundary $\partial B(0,R)$, hence
\begin{eqnarray*}
\max_{B(0,R)}|W_R^{\varepsilon}|-Af^{-1}&=&\max_{\partial B(0,R)}|W_R^{\varepsilon}|-Af^{-1}\\
&\leq&\varepsilon.
\end{eqnarray*}
The result follows by letting $\varepsilon$ go to $0$. 
\end{proof}

By Proposition \ref{propa-priori-weighted-diri}, one can extract a subsequence $(W_{R_k})_{k\geq 0}$ for a sequence of radii $(R_k)_{k\geq 0}$ going to $+\infty$, that converges in the $C^{2,\beta}_{loc}$ topology, for any $\beta\in(0,1)$, to a vector field $W:\R^n\rightarrow\R^m$ satisfying:
\begin{eqnarray}
 \Delta_fW-K\chi'(d^2_{N}(U_0^{\sigma}))d_{U_0^{\sigma}}\bar{d}_N(W)\nabla \bar{d}_N(U_0^{\sigma})&=&Q(U_0^{\sigma},V),\quad\text{ on } \R^n,\\
 \sup_{\R^n}f|W|&\leq& C(n,m,K)\|Q(U_0^{\sigma},V)\|_{X}.\label{a-priori-bd-F}
\end{eqnarray}
This solution $W$ is unique among regular solutions that converge to $0$ at infinity by the maximum principle. Therefore, the map $F_K^{\sigma}$ makes sense provided the gradient of $F_K^{\sigma}(V)$ decays like $f^{-3/2}$, i.e. $F_K^{\sigma}(V)\in X$. This is the content of the next proposition.

\begin{prop}\label{a-priori-c1-bd-sol-F}
The solution $F_K^{\sigma}(V)$ satisfies the weighted a priori $C^1$ bound
\begin{eqnarray}
\sup_{\R^n}f^{3/2}|\nabla F_K^{\sigma}(V)|\leq C(n,m,K)\|Q(U_0^{\sigma},V)\|_{X}.
\end{eqnarray}

\end{prop}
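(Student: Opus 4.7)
The plan is to combine the weighted $C^0$ bound from Proposition~\ref{propa-priori-weighted-diri} with a Bernstein-type argument applied to $|\nabla F_K^{\sigma}(V)|^2$, using $f^{-3}$ as a barrier, very much in the spirit of the barrier $f^{-1}$ used for $|W|$ in Proposition~\ref{propa-priori-weighted-diri}. Set $W := F_K^{\sigma}(V)$ and $P := \chi'(d_N^2(U_0^{\sigma}))\,\nabla d_N(U_0^{\sigma})\otimes\nabla d_N(U_0^{\sigma})\ge 0$, so that $W$ satisfies $\Delta_f W - KPW = Q$. Proposition~\ref{propa-priori-weighted-diri} provides $|W|\le C\|Q\|_X f^{-1}$ and our aim is to deduce $|\nabla W|\le C(n,m,K)\|Q\|_X f^{-3/2}$.

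Inside any fixed ball $B(0,R_0)$ the desired bound follows from standard interior Schauder estimates together with the weighted $C^0$ bound, so it is enough to establish it outside a large ball $\{|x|\ge R_0\}$ with $R_0=R_0(n,m,K)$. There I would apply the Bochner identity for $\Delta_f$ to $|\nabla W|^2$. Differentiating the equation $\Delta_f W = Q + KPW$, the coupling produces a non-negative term $K\langle \nabla W, P\nabla W\rangle\ge 0$ because $P$ is symmetric positive semidefinite, while the remaining cross terms are dispatched by Cauchy--Schwarz using the decays $|\nabla Q|\le \|Q\|_X f^{-3/2}$ (from $Q\in X$), $|W|\le C\|Q\|_X f^{-1}$ (from Proposition~\ref{propa-priori-weighted-diri}), and $|\nabla U_0^{\sigma}|\le C f^{-1/2}$ from Lemma~\ref{lemma-first-app-Chen-Str} (which in turn gives $|\nabla P|\le Cf^{-1/2}$). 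After absorption, all error terms are pointwise controlled by $C(n,m,K)\|Q\|_X^2 f^{-3}$.

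Using the identity $\Delta_f f^{\alpha} = \alpha(\alpha-1)f^{\alpha-2}(f-n/2)+\alpha f^{\alpha}$ with $\alpha=-3$, one has $\Delta_f f^{-3} = -3f^{-3}+O(f^{-4})$. Setting $\Phi := |\nabla W|^2 - Af^{-3}$ with $A=C(n,m,K)\|Q\|_X^2$ chosen large, and absorbing the remaining lower order contributions outside a sufficiently large ball $B(0,R_0)$, one obtains a differential inequality of the form $(\Delta_f - c)\Phi\ge 0$ on $\{|x|\ge R_0\}$ with $c>0$. The classical maximum principle for operators with strictly negative zeroth order coefficient then applies on every annulus $B(0,R)\setminus\overline{B(0,R_0)}$: the maximum of $\Phi$ is attained on the boundary. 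On the inner sphere $\{|x|=R_0\}$ one enlarges $A$ if necessary so that $\Phi\le 0$, using interior Schauder on $B(0,2R_0)$ to bound $|\nabla W|$ by $C(n,m,K,R_0)\|Q\|_X$; on the outer sphere $\{|x|=R\}$ a preliminary rescaled Schauder estimate gives $|\nabla W|\le C\|Q\|_X/|x|$ and hence $\Phi\to 0$ as $R\to\infty$. Letting $R\to\infty$ yields $\Phi\le 0$ on $\{|x|\ge R_0\}$, which is the claimed inequality.

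The main obstacle is the \emph{unfavourable} sign of the Bakry--Emery contribution $-\tfrac{1}{2}|\nabla W|^2$ that appears in the Bochner identity for the forward drift $\Delta_f = \Delta + \nabla f\cdot\nabla$ (since on Gaussian space $\Ric-\Hess f = -\tfrac{1}{2}\Id$). Unlike the $C^0$ estimate where $\Delta_f f^{-1}\le -cf^{-1}$ is transparent, here one has to fight the wrong-sign term, which is precisely what forces the introduction of the preliminary rescaled interior Schauder bound $|\nabla W|\le C\|Q\|_X/|x|$: it is used to control the unwanted $-\tfrac{1}{2}|\nabla W|^2$ in terms of $\|Q\|_X^2 f^{-1}$, so that together with the positive gain from $-A\Delta_f f^{-3}$ and the semidefiniteness of $KP$, the annular maximum principle actually closes with the sharp power $f^{-3/2}$. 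The hypothesis $n\ge 3$ enters critically through the decay of $\nabla U_0^{\sigma}$ in Lemma~\ref{lemma-first-app-Chen-Str}.
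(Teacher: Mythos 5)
Your outline correctly identifies the main obstacle — the wrong-sign Bakry--Emery term — and the auxiliary ingredients (preliminary Schauder bound, sign of the penalisation, decay of $Q$ and $U_0^\sigma$) match the paper's. However, the key step where you try to overcome that obstacle does not close, and this is a genuine gap rather than a detail.

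You set $\Phi := |\nabla W|^2 - Af^{-3}$ and aim for $(\Delta_f - c)\Phi \ge 0$. From the Bochner identity one has
\begin{equation*}
\Delta_f |\nabla W|^2 \;\ge\; 2|\nabla^2 W|^2 \;-\;|\nabla W|^2\;+\;(\text{nonnegative }KP\text{-term})\;-\;C\|Q\|_X\,f^{-3/2}|\nabla W|,
\end{equation*}
and the barrier contributes $-A\Delta_f(f^{-3}) = 3A f^{-3} + O(f^{-4})$. The bad term $-|\nabla W|^2$ decays at best like $f^{-2}$ (from the rescaled Schauder estimate $|\nabla W| \lesssim \|Q\|_X f^{-1}$; your quoted $|\nabla W|\lesssim f^{-1/2}$ is actually weaker than what Schauder gives), whereas $3Af^{-3}$ decays strictly faster. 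So $3Af^{-3}$ cannot dominate $-|\nabla W|^2$ outside any ball, no matter how large $A$ or $R_0$ is chosen. Replacing the preliminary estimate $|\nabla W|^2\lesssim f^{-1}$ into the bad term gives $-|\nabla W|^2\ge -C\|Q\|_X^2 f^{-1}$, but $f^{-1}$ still dominates $f^{-3}$ at infinity, so the inequality $(\Delta_f - c)\Phi\ge 0$ is simply false on $\{|x|\ge R_0\}$; the positive semi-definiteness of $KP$ and the gain from $-A\Delta_f f^{-3}$ are both too weak to help. In short, an additive barrier of faster decay can never absorb a wrong-sign term of slower decay.

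The paper's proof resolves this precisely where yours stalls: rather than subtracting a barrier, it first \emph{multiplies} by $f^2$ and studies $f^2|\nabla F(V)|^2$. Since $\Delta_f(f^2)=2f\Delta_f f+2|\nabla f|^2=2f^2+O(f)$, the product rule converts the bad $-f^2|\nabla F(V)|^2$ into a \emph{favourable} $+f^2|\nabla F(V)|^2$ (after absorbing the cross term $8f^{3/2}|\nabla^2 F(V)||\nabla F(V)|$ by Young's inequality into $2f^2|\nabla^2 F(V)|^2$), and the Schauder bound $f|\nabla F(V)|\lesssim \|Q\|_X$ turns the forcing into a term of genuinely lower order. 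Only then is the (now benign) maximum-principle argument run, with barriers of the form $k^{-1}\ln f + A f^{-1}$ and $k\to\infty$ at the end. This multiplicative trick is not cosmetic: it is the mechanism that flips the sign of the Bakry--Emery contribution, and without it your annular argument cannot reach the sharp power $f^{-3/2}$.
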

\begin{proof}
Strictly speaking, the solution $F_K^{\sigma}(V)=:F(V)$ is in $C^{2,\beta}_{loc}$ only. Since the bound we want to establish only involves the first derivatives of $F(V)$ and of $Q(U_0^{\sigma},V)$, we can assume $V$ and hence $F(V)$ and $Q(U_0^{\sigma},V)$ are smooth. Moreover, by interior parabolic Schauder estimates applied to the corresponding time-dependent solution $F(V)(x,t):=F(V)(x/\sqrt{t})$ defined on $\R^n\times \R_+$, one gets the bounds
\begin{eqnarray}
\sup_{x\in\R^n}f(x)\|F(V)\|_{C^{2,\beta}(B(x,1))}\leq C(n,m,\beta,K)\|Q(U_0^{\sigma},V)\|_{X}.\label{rough-bds-F-sol}
\end{eqnarray}

We compute the evolution equation (in disguise) of the gradient $\nabla F(V)$:
\begin{eqnarray*}
\Delta_f\nabla F(V)&=&\nabla(\Delta_fF(V))-\frac{1}{2}\nabla F(V)\\
&=&-\frac{1}{2}\nabla F(V)+K\nabla\left(\chi'(d^2_{N}(U_0^{\sigma}))d_{U_0^{\sigma}}\bar{d}_N(F(V))\nabla \bar{d}_N(U_0^{\sigma})\right)+\nabla \left(Q(U_0^{\sigma},V)\right),
\end{eqnarray*}
which implies, by (\ref{a-priori-bd-F}) and the fact that $Q(U_0^{\sigma},V)\in X$,
\begin{equation}
\begin{split}
\Delta_f|\nabla F(V)|^2&\geq 2|\nabla^2F(V)|^2 -|\nabla F(V)|^2+2K\chi'(d^2_{N}(U_0^{\sigma}))\left[d_{U_0^{\sigma}}\bar{d}_N(\nabla F(V))\right]^2\\
&-C\|Q(U_0^{\sigma},V)\|_{X}f^{-3/2}|\nabla F(V)|\\
&\geq2|\nabla^2F(V)|^2 -|\nabla F(V)|^2-C\|Q(U_0^{\sigma},V)\|_{X}f^{-3/2}|\nabla F(V)|.\label{evo-equ-F-V-grad}
\end{split}
\end{equation}
Indeed,
\begin{eqnarray*}
&&\left\langle\nabla\left(\chi'(d^2_{N}(U_0^{\sigma}))d_{U_0^{\sigma}}\bar{d}_N(F(V))\nabla \bar{d}_N(U_0^{\sigma})\right),\nabla F(V)\right\rangle=\\
&&\chi''(d^2_{N}(U_0^{\sigma}))\langle\nabla\bar{d}_N(U_0^{\sigma}),F(V)\rangle\cdot \langle\nabla\bar{d}_N(U_0^{\sigma}),\nabla_{\nabla(d_N^2(U_0^{\sigma}))} F(V)\rangle\\
&&+\chi'(d^2_{N}(U_0^{\sigma}))\left(\nabla U_0^{\sigma}\ast F(V)+\left[d_{U_0^{\sigma}}\bar{d}_N(F(V))\right]^2\right).
\end{eqnarray*}
Therefore, (\ref{a-priori-bd-F}) and [(\ref{est-cal-5}), Lemma \ref{lemma-first-app-Chen-Str}] implies the estimate (\ref{evo-equ-F-V-grad}).

Now, recall that $\Delta_ff^2=2f^2+2|\nabla f|^2\geq 2f^2$ and multiply the differential inequality (\ref{evo-equ-F-V-grad}) by $f^{2}$ to absorb the term $-|\nabla F(V)|^2$ as follows
\begin{eqnarray*}
\Delta_f\left(f^{2}|\nabla F(V)|^2\right)&\geq&2f^{2}|\nabla^2F(V)|^2-8f^{3/2}|\nabla^2 F(V)||\nabla F(V)|\\
&&+ f^{2}|\nabla F(V)|^2-C(K,\|Q(U_0^{\sigma},V)\|_X)f^{-1/2}\left(f|\nabla F(V)|\right)\\
&\geq&\left(1-Cf^{-1}\right)f^{2}|\nabla F(V)|^2-C\|Q(U_0^{\sigma},V)\|_Xf^{-1}\\
&\geq&f^{2}|\nabla F(V)|^2-C\|Q(U_0^{\sigma},V)\|_Xf^{-1},
\end{eqnarray*}
where we used Young inequality together with (\ref{rough-bds-F-sol}) to get the last inequality and $C$ denotes a positive constant independent of $V$ and $F(V)$ that may vary from line to line.\\


Observe that $\Delta_f\ln f=1-|\nabla\ln f|^2\leq 1$ on $\R^n$.\\

By considering a function of the form $k^{-1}\ln f$ where $k$ is a positive integer, one gets:
\begin{eqnarray*}
\Delta_f\left(f^{2}|\nabla F(V)|^2-k^{-1}\ln f-Af^{-1}\right)&\geq&f^{2}|\nabla F(V)|^2-C\|Q(U_0^{\sigma},V)\|_Xf^{-1}-k^{-1}+ACf^{-1}\\
&\geq&f^2|\nabla F(V)|^2-k^{-1}\ln f-Af^{-1}-k^{-1},
\end{eqnarray*}
for any positive constant $A$ larger than $C\|Q(U_0^{\sigma},V)\|_X$, for some positive constant $C$ uniform in $\sigma\in[0,1]$.
Now, as we know that the function $f^{2}|\nabla F(V)|^2$ is bounded, the function $f^{2}|\nabla F(V)|^2-k^{-1}\ln f-Af^{-1}$ goes to $-\infty$ as $x$ goes to $+\infty$. Therefore, it attains its maximum. The maximum principle applied to the previous differential inequality implies
$$f^{2}|\nabla F(V)|^2-k^{-1}\ln f-Af^{-1}\leq k^{-1},\quad\text{on $\R^n$.}$$
As $A$ is independent of $k$, we can let $k$ go to $+\infty$ and get the expected result.
\end{proof}

We now claim that the map $F_K$ is a compact operator, the proof of its continuity being analogous:

\begin{prop}\label{map-cont-cpct}
The map $F_K: X\times[0,1]\rightarrow X$ is a continuous and compact map.
\end{prop}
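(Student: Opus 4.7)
The plan is to combine the a priori weighted estimates of Propositions \ref{propa-priori-weighted-diri} and \ref{a-priori-c1-bd-sol-F} with interior Schauder estimates and a tail-control argument to obtain compactness, and then to derive continuity from the linearity of the defining equation together with the Lipschitz estimate of Lemma \ref{est-Q-lemma}. Since $f(x)\sim |x|^2/4$ at infinity, the bounds
$\sup_{\R^n}f|F_K^\sigma(V)|+\sup_{\R^n}f^{3/2}|\nabla F_K^\sigma(V)|\leq C\|Q(U_0^\sigma,V)\|_X$
translate directly into an estimate on $\|F_K^\sigma(V)\|_X$, and Lemma \ref{est-Q-lemma} bounds $\|Q(U_0^\sigma,V)\|_X$ in terms of $\|V\|_X$; so the image of any bounded subset of $X \times [0,1]$ is already bounded in $X$.

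For compactness, given a bounded sequence $(V_n,\sigma_n)\subset X\times[0,1]$, the interior Schauder bounds contained in Proposition \ref{propa-priori-weighted-diri} furnish uniform $C^{2,\beta}(B(0,R_0))$ estimates on $F_K^{\sigma_n}(V_n)$ for every $R_0>0$. A diagonal Arzel\`a--Ascoli extraction then produces a subsequence converging in $C^{2,\beta'}_{loc}$ for every $\beta'<\beta$ to some $W\in X$. On each fixed ball $B(0,R)$ the weights $(1+|x|)^2$ and $(1+|x|)^3$ are bounded, so $C^{1}_{loc}$ convergence automatically upgrades to convergence in the portion of the $X$-norm computed on $B(0,R)$. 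The uniform pointwise decay provided by the weighted a priori estimates makes the contribution of $\R^n\setminus B(0,R)$ to the $X$-norm arbitrarily small when $R$ is large, uniformly in $n$. Splitting the $X$-norm across $B(0,R)$ and its complement and combining the two ingredients yields convergence of $F_K^{\sigma_n}(V_n)$ to $W$ in $X$.

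For continuity, the approach is to exploit that the defining equation for $F_K^\sigma(V)$ is linear in the unknown. The difference $F_K^\sigma(V_1)-F_K^\sigma(V_2)$ solves the same linear elliptic system with right-hand side $Q(U_0^\sigma,V_1)-Q(U_0^\sigma,V_2)$, so applying Propositions \ref{propa-priori-weighted-diri} and \ref{a-priori-c1-bd-sol-F} together with the Lipschitz estimate (\ref{est-Q-2}) gives
$$\|F_K^\sigma(V_1)-F_K^\sigma(V_2)\|_X\leq C(u_0^\sigma,\|V_1\|_X+\|V_2\|_X)\,\|V_1-V_2\|_X,$$
which is local Lipschitz continuity in $V$ at fixed $\sigma$. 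Joint continuity is then obtained via the compactness step: along any sequence $(V_n,\sigma_n)\to(V,\sigma)$, an extracted subsequence yields an $X$-limit $W$ of $F_K^{\sigma_n}(V_n)$; since $U_0^{\sigma_n}\to U_0^\sigma$ in $C^{2}_{loc}$ by continuous dependence on the path $(u_0^\sigma)$, one passes to the limit in the coefficients and the right-hand side and concludes that $W$ solves the equation defining $F_K^\sigma(V)$; the uniqueness of decaying solutions furnished by the maximum principle argument used in Proposition \ref{propa-priori-weighted-diri} identifies $W=F_K^\sigma(V)$, so the whole sequence converges.

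The main obstacle is matching the sharp decay rates required by the norm of $X$: the space imposes pointwise $(1+|x|)^2$ and $(1+|x|)^3$ weights on $V$ and $\nabla V$, which are not automatic from elliptic regularity alone and are precisely what the barrier argument of Proposition \ref{a-priori-c1-bd-sol-F} secures. Once these weighted estimates are available, the tail-control step becomes routine and the whole scheme reduces to Arzel\`a--Ascoli plus linear uniqueness.
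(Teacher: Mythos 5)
The compactness step contains a genuine gap. You claim that the weighted a priori estimates of Propositions \ref{propa-priori-weighted-diri} and \ref{a-priori-c1-bd-sol-F} make ``the contribution of $\R^n\setminus B(0,R)$ to the $X$-norm arbitrarily small when $R$ is large, uniformly in $n$.'' But those propositions give exactly the decay rates that \emph{define} the $X$-norm: $|F_K^\sigma(V)|\lesssim f^{-1}\sim(1+|x|)^{-2}$ and $|\nabla F_K^\sigma(V)|\lesssim f^{-3/2}\sim(1+|x|)^{-3}$. Consequently the weighted tail quantity $\sup_{|x|\geq R}(1+|x|)^2|F_K^{\sigma_n}(V_n)-W|+(1+|x|)^3|\nabla(F_K^{\sigma_n}(V_n)-W)|$ is merely \emph{bounded} as $R\to\infty$, uniformly in $n$; it does not tend to zero. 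Local $C^{2,\beta}$ convergence plus a uniform $X$-bound does not, by itself, yield $X$-norm convergence, and this is precisely the difficulty that Remark \ref{rk-cpct-op} in the paper flags.

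What is missing is the extra decay that the paper extracts from the structure of the nonlinearity. The correct maneuver is to set $G(V):=F_K^\sigma(V)-F_K^\sigma(0)$ and observe that $G(V)$ solves the same linear equation with right-hand side $Q(U_0^\sigma,V)-Q(U_0^\sigma,0)$. One then shows, via Taylor's theorem around $V=0$ and the fact that $D_2Q(U_0,0)(V)-K\chi'(d_N^2(U_0^\sigma))d_N(U_0)\nabla^2 d_N(U_0)(V)$ has compact support, that this right-hand side decays like $f^{-2}$ (not merely $f^{-1}$). Repeating the barrier argument with $f^{-2}$ in place of $f^{-1}$ yields
\begin{eqnarray*}
\sup_{\R^n}f^{2+i/2}\,|\nabla^i G(V)|\leq C(K,n,m,\|V\|_X),\quad i=0,1,
\end{eqnarray*}
i.e.\ a rate \emph{strictly better} than the one the $X$-norm measures. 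It is this surplus decay of $G(V_i)$, uniform in $i$, that makes the tail of the $X$-norm small; since $F_K^\sigma(0)$ is fixed and cancels in differences, the local $C^{2,\beta}$ convergence then upgrades to convergence in $X$. Your continuity argument (and the local Lipschitz estimate in $V$) is fine, but the compactness proof cannot close without this additional decay estimate for $G(V)$.
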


\begin{proof}
We only prove the compactness of the map $F_K^{\sigma}$ where $\sigma\in[0,1]$. Let $(V_i)_{i\geq 0}$ be a bounded sequence in $X$. According to (\ref{rough-bds-F-sol}), the sequence $(F_K^{\sigma}(V_i))_i$ subconverges in the $C^{2,\beta}_{loc}$ topology to a map that belongs to $X$. In order to prove that the convergence holds in $X$, it is sufficient to prove that for any $V\in X$ and $i=0,1$ the following estimates hold:
\begin{eqnarray*}
\sup_{\R^n}f^{2+i/2}|\nabla^i(F_K^{\sigma}(V)-F_K^{\sigma}(0))|\leq C(K,n,m,\|V\|_{X}).
\end{eqnarray*}

In the proof of these estimates, for the sake of clarity, we omit the dependence of $F_K^{\sigma}$ and $U_0^{\sigma}$ on $K$ and $\sigma$.

Recall that $F_K^{\sigma}(0)=:F(0)$ is the unique solution in $X$ of: 
\begin{eqnarray*}
\Delta_fF(0)-K\chi'(d^2_{N}(U_0))d_{U_0}\bar{d}_N(F(0))\nabla \bar{d}_N(U_0)&=&Q(U_0,0)\\
&=&K\chi'(d^2_{N}(U_0))\nabla\left(\frac{d_N^2}{2}\right)(U_0).
\end{eqnarray*}

Therefore, $G(V):=F(V)-F(0)\in X$ is a solution of
\begin{eqnarray*}
\Delta_fG(V)-K\chi'(d^2_{N}(U_0^{\sigma}))d_{U_0^{\sigma}}\bar{d}_N(G(V))\nabla \bar{d}_N(U_0^{\sigma})&=&Q(U_0,V)-Q(U_0,0).
\end{eqnarray*}
Now, a Taylor expansion of degree $2$ as in (\ref{tay-exp-Q-data}), using the very definition (\ref{def-Q-1}) of $Q(U_0,V)$, shows that the differential of $Q$ with respect to the variable $V$ satisfies:
\begin{equation}
\begin{split}
D_2Q(U_0,0)(V)&=K\chi'(d^2_{N}(U_0^{\sigma}))\bar{d}_N(U_0)\nabla^2\bar{d}_N(U_0)(V)\\
&+K\chi''(d_N^2(U_0^{\sigma}))d_{U_0^{\sigma}}d_N^2(V)\nabla\left(\frac{d_N^2}{2}\right)(U_0^{\sigma}).\label{first-der-Q-sec-var}
\end{split}
\end{equation}
Notice that the second term on the righthand side of (\ref{first-der-Q-sec-var}) is compactly supported in a ball of $\R^n$ whose radius is independent of $\sigma\in[0,1]$ by the definition of the function $\chi$ and Lemma \ref{lemma-first-app-Chen-Str}.

In particular, by Taylor's theorem together with the estimates of Lemma \ref{lemma-first-app-Chen-Str}, 
\begin{eqnarray*}
\sup_{\R^n}f^{2+i/2}|\nabla^i(Q(U_0,V)-Q(U_0,0))|\leq C(K,n,m,\|V\|_{X}),\quad i=0,1.
\end{eqnarray*}
As in the proof of Proposition \ref{propa-priori-weighted-diri}, one can use a barrier function of the form $f^{-2}$ in order to prove that $G(V)$ decays like $f^{-2}$ uniformly with respect to $\sigma$ and $V$ in a fixed ball in $X$. Similarly to the proof of Proposition \ref{a-priori-c1-bd-sol-F}, one proves the expected decay on the gradient of $G(V)$ at infinity.

\end{proof}

\subsection{Well-posedness of the homogeneous Chen-Struwe equation for small initial data}\label{section-CS-well-posed}
In this subsection, we assume that $\sigma$ is close to $1$ and hence $U_0^\sigma$ is close to a constant map in the sense that 
\[
\|U_0^{\sigma}-P\|_{L^\infty}+\sup_{x\in \R^n}(1+|x|)|\nabla U_0^{\sigma}(x)|\le C(u_0^\sigma),
\]
where $P\in \mathbb{S}^{m-1}$ and where $\lim_{\sigma\rightarrow 1}C(u_0^{\sigma})=0$. We show that for every $\varepsilon$ small enough the map $I-F^\sigma_K:B_{X}(0,\varepsilon)\to B_{X}(0,\varepsilon)$ has Leray-Schauder degree one at the origin. In other words, we show that $F^\sigma_K$ has a unique fixed point in $B_{X}(0,\varepsilon)$ if $\varepsilon$ and $1-\sigma$ are sufficiently small.

In order to see this, we note that Lemma \ref{est-Q-lemma} together with Propositions \ref{propa-priori-weighted-diri} and \ref{a-priori-c1-bd-sol-F} shows that for all $V\in B_{X}(0,\varepsilon)$, we have
\[
\|F^\sigma_K(V)\|_{X} \le C(u_0^{\sigma})+C\varepsilon^2
\]
and therefore we have indeed that $F^\sigma_K$ maps $B_{X}(0,\varepsilon)$ into itself, provided that $1-\sigma$ and $\varepsilon$ are chosen small enough.

In order to show that $F^\sigma_K$ is also a contraction on $B_{X}(0,\varepsilon)$, we invoke Lemma \ref{est-Q-lemma} together with Propositions \ref{propa-priori-weighted-diri} and \ref{a-priori-c1-bd-sol-F} again to prove that
\[
\|F^\sigma_K(V_1)-F^\sigma_K(V_2)\|_{X}\le C\varepsilon \|V_1-V_2\|_{X}
\]
and hence this shows the contraction property if we choose $\varepsilon$ small enough.
Altogether, this implies the desired result about the Leray-Schauder degree.

\section{An $\varepsilon$-regularity theorem for Chen-Struwe expanding solutions}\label{sec-varespilon-reg-thm}
We emphasize on the fact that this section does not require the target manifold $(N,g)$ to be embedded as a hypersurface of Euclidean space.

\subsection{A Bochner formula}\label{sec-boch-formula-chen-struwe}

It is a straightforward adaptation from \cite{Chen-Struwe} to get the following crucial Bochner formula:

\begin{prop}[Bochner formula]\label{Boch-Form-Chen-Str}
Let $u:\R^n\times(0,T)\rightarrow\R^m$ be a smooth solution to the Homogeneous Chen-Struwe flow:
\begin{eqnarray}
\partial_tu-\Delta u+\frac{K}{t}\chi'\left( d^2_{N}(u)\right)\nabla\left( \frac{d^2_{N}}{2}\right)(u)=0.\label{Chen-Str-equ-evo}
\end{eqnarray}
Then, the pointwise energy $e_K(u):=\frac{1}{2}\left(|\nabla u|^2+\frac{K}{t}\chi\left( d^2_{N}(u)\right)\right)$ satisfies the inequality 
\begin{eqnarray*}
(\partial_t-\Delta)e_K(u)\leq Ce_K(u)^2,
\end{eqnarray*}
on $\R^n \times (0,T)$, for some positive constant $C$ independent of $K$.
\end{prop} 
\begin{proof}
We first remark that if $d_{N}(u)\leq 2\cdot \delta_N$ then, $$\left|\nabla\left( \frac{d^2_{N}}{2}\right)(u)\right|^2=d^2_{N}(u).$$
Now,
\begin{eqnarray*}
\frac{1}{2}(\partial_t-\Delta)\chi\left( d^2_{N}(u)\right)&=&-\frac{K}{t}\chi'^2d^2_{N}(u)-\nabla\left(\chi'\nabla\left( \frac{d^2_{N}}{2}\right)(u)\right)\cdot\nabla u,\\
\frac{1}{2}(\partial_t-\Delta)|\nabla u|^2&=&-\frac{1}{t}\nabla\left(K\chi'\nabla\left( \frac{d^2_{N}}{2}\right)(u)\right)\cdot\nabla u-|\nabla^2u|^2.
\end{eqnarray*}
These computations lead to the estimate
\begin{eqnarray*}
(\partial_t-\Delta)e_K(u)+|\nabla^2u|^2+\frac{K^2}{t^2}\chi'^2d^2_{N}(u)&=&-\frac{2}{t}\nabla\left(K\chi'\nabla\left( \frac{d^2_{N}}{2}\right)(u)\right)\cdot\nabla u\\
&&-\frac{K}{t^2}\chi\left( d^2_{N}(u)\right)\\
&\leq&-\frac{2}{t}\nabla\left(K\chi'\nabla\left( \frac{d^2_{N}}{2}\right)(u)\right)\cdot\nabla u,
\end{eqnarray*}
which implies the expected result if $d_{N}(u)>2\cdot \delta_N$. If $d_{N}(u)\leq2\cdot \delta_N$, by using the fact that $\chi'$ is nonnegative we obtain
\begin{eqnarray*}
(\partial_t-\Delta)e_K(u)+|\nabla^2u|^2+\frac{K^2}{t^2}\chi'^2d^2_{N}(u)&\leq&\frac{1}{2t^2}K^2d^2_{N}(u)+c|\nabla u|^4,
\end{eqnarray*}
for some uniform positive constant $c$ independent of $K>0$.

Therefore, in all cases, this gives the expected estimate.

\end{proof}

\subsection{An energy inequality and a local entropy monotonicity formula}\label{Energy-inequ-section}
 
 We define the $L^2_{loc}$ norm at scale $R>0$ of a map $u:\R^n\rightarrow\R^m$ in $H^1_{loc}(\R^n,\R^m)$ as follows:
 \begin{eqnarray*}
\|\nabla u\|^2_{L^2_{loc,R}}:=\sup_{x_0\in \mathbb{R}^n}\fint_{B(x_0,R)}|\nabla u|^2(y)dy.
\end{eqnarray*}
It follows easily that
\begin{eqnarray}
c_n^{-1}\left(\frac{R_1}{R_2}\right)^n\|\nabla u\|^2_{L^2_{loc,R_1}}\leq\|\nabla u\|^2_{L^2_{loc,R_2}}\leq c_n \left(\frac{R_2}{R_1}\right)^n\|\nabla u\|^2_{L^2_{loc,R_1}},\quad 0<R_1\leq R_2.\label{rk-L-loc}
\end{eqnarray}
Moreover, we use the shorthand notation $\|\nabla u\|^2_{L^2_{loc}}$ for $\|\nabla u\|^2_{L^2_{loc,1}}$.

Finally, we define the rescaled energy with parameter $K>0$ for a solution $u$ to the Homogeneous Chen-Struwe flow with parameter $K>0$ by
\begin{eqnarray*}
E_{K,x_0}(u(t))&:=&\fint_{B(x_0,1)}\left(\frac{|\nabla u|^2}{2}+\frac{K}{2t}\chi\left( d^2_{N}(u)\right)\right)dx,\quad t>0,\\
E_{K,loc}(u(t))&:=&\sup_{x_0\in\R^n} E_{K,x_0}(u(t)),\quad t>0.
\end{eqnarray*}

 \begin{theo}\label{theo-l2loc-Chen-Str}
 Let $u_0:\mathbb{R}^n\rightarrow (N,g)\subset\R^m$ be in $H^1_{loc}(\R^n,\R^m)$. Let $(u(t))_{t>0}$ be a smooth solution to the Homogeneous Chen-Struwe flow with parameter $K>0$ coming out of $u_0$ such that $(E_{K,x_0}(u(t)))_{t>0}$ is continuous at $t=0$ for every $x_0\in\R^n$. Then, 
 \begin{eqnarray}
 E_{K,x_0}(u(t))&\leq& \left(1+C\left(n,m,\|\nabla u_0\|_{L^2_{loc}},t\right)\right)\|\nabla u_0\|^2_{L^2(B(x_0,1))},\quad\forall x_0\in\R^n,\label{est-a-priori-Chen-Str}\\
E_{K,loc}(u(t))&\leq& \left(1+c_n \left(e^{c_nt}-1\right)\right)\|\nabla u_0\|_{L^2_{loc}}^2, \quad t>0,\label{est-a-priori-Chen-Str-bis}
\end{eqnarray}
where $\lim_{t\rightarrow 0}C\left(n,m,\|\nabla u_0\|_{L^2_{loc}},t\right)=0$.

Moreover, the following estimate holds (it is uniform in $K$)
\begin{eqnarray*}
\sup_{x_0\in\R^n}\int_{B(x_0,1)\times(0,t)}\left(|\partial_su|^2+\frac{K}{2s^2}\chi\left( d^2_{N}(u)\right)\right)dxds\leq \left(1+c_n \left(e^{c_nt}-1\right)\right)\|\nabla u_0\|_{L^2_{loc}}^2, \quad t>0.
\end{eqnarray*}

In particular, if $u_0$ is $0-homogeneous$ and if  $u$ is an expanding solution coming out of $u_0$ smoothly, then:
 \begin{eqnarray}
E_{K,loc}(u)\leq \left(1+c_n\left(e^{c_n}-1\right)\right)\|\nabla u_0\|^2_{L^2_{loc}}.\label{rk-L-loc-Chen-Str}
\end{eqnarray}
 
 \end{theo}
 \begin{proof}
 We proceed analogously to what is done to establish an energy estimate in this setting. We multiply the Homogeneous Chen-Struwe flow equation by $\phi_{x_0}^2\partial_tu$ where $\phi_{x_0}:\mathbb{R}^n\rightarrow \mathbb{R}_+$ is a smooth function with compact support in $B(x_0,2)$ which equals $1$ on $B(x_0,1)$ and whose gradient is less than $c$, and then we integrate by parts to get
  \begin{eqnarray*}
\int_{\mathbb{R}^n}|\partial_tu|^2\phi_{x_0}^2dx&=&\int_{\R^n}<\Delta u,\partial_t u>\phi_{x_0}^2dx-\frac{K}{2t}\int_{\R^n}\partial_t\left(\chi\left( d^2_{N}(u)\right)\right)\phi_{x_0}^2dx\\
&=&-\partial_t\int_{\R^n}e_K(u)\phi_{x_0}^2dx+2<\nabla_{\nabla \phi_{x_0}} u,\phi_{x_0}\partial_tu>_{L^2}\\
&&-\frac{K}{2t^2}\int_{\R^n}\chi\left( d^2_{N}(u)\right)\phi_{x_0}^2dx\\
&\leq&-\partial_t\int_{\R^n}e_K(u)\phi_{x_0}^2dx+\frac{1}{2}\|\phi_{x_0}\partial_tu\|^2_{L^2}\\
&&+2\|\nabla_{\nabla \phi_{x_0}} u\|^2_{L^2}-\frac{K}{2t^2}\int_{\R^n}\chi\left( d^2_{N}(u)\right)\phi_{x_0}^2dx.\\
\end{eqnarray*}

 By integrating with respect to time we then obtain
\begin{eqnarray*}
\frac{1}{2}\int_{\mathbb{R}^n\times(0,t)}\left(|\partial_su|^2+\frac{K}{s^2}\chi\left( d^2_{N}(u)\right)\right)\phi_{x_0}^2dxds+E_{K,x_0}(u(t))&\leq& \frac{1}{2}\int_{\mathbb{R}^n}|\nabla u_0|^2\phi_{x_0}^2dx+\\
&&+2c^2\int_0^t\|\nabla u(s)\|^2_{L^2_{loc,2}}ds,
\end{eqnarray*}
where we used Young's inequality together with the $L^2_{loc}$ continuity of $(u(t))_{t>0}$ at $t=0$. Therefore, by remark (\ref{rk-L-loc}), one gets in particular:
\begin{eqnarray*}
\fint_{B(x_0,1)}|\nabla u(t)|^2dx\leq \|\nabla u_0\|^2_{L^2_{loc,1}}+c_n\int_0^t\|\nabla u(s)\|^2_{L^2_{loc,1}}dt,
\end{eqnarray*}
which implies:
\begin{eqnarray*}
\|\nabla u(t)\|^2_{L^2_{loc}}\leq \|\nabla u_0\|^2_{L^2_{loc}}+c_n\int_0^t\|\nabla u(s)\|^2_{L^2_{loc}}ds,\end{eqnarray*}
where $c_n$ is a positive constant that can vary from line to line depending on the dimension $n$ only.

The result now follows from Gronwall's inequality.
\end{proof}

  In the following we let $\chi_R:\mathbb{R}^n\rightarrow [0,1]$ be a cut-off function such that $\chi_R\equiv 1$ outside $B(0,R)$, $\chi_R\equiv 0$ in $B(0,R/2)$ and whose gradient satisfies $|\nabla\chi_R|=\textit{O}(R^{-1}).$ Then one has the following localised version of Theorem \ref{theo-l2loc-Chen-Str} at infinity.
 
 \begin{prop}\label{theo-l2loc-at-infty-a-priori-Chen-Str} 
  Let $u_0:\mathbb{R}^n\rightarrow N$ be in $H^1_{loc}(\R^n,\R^m)$. Let $(u(t))_{t>0}$ be a smooth solution to the Homogeneous Chen-Struwe flow coming out of $u_0$ such that $(E_{K,x_0}(u(t))_{t>0}$ is continuous at $t=0$ for every $x_0\in\R^n$. Then, if $R>0$,
 \begin{eqnarray*}
\|(\nabla u(t))\chi_R\|_{L^2_{loc}}\leq C(n,T)\left(\|(\nabla u_0)\chi_R\|_{L^2_{loc}}+\frac{\|\nabla u_0\|_{L^2_{loc}}}{R}\right), \quad 0<t\leq T.
\end{eqnarray*}
In particular, if $u_0$ is $0-homogeneous$ and if  $u$ is an expanding solution coming out of $u_0$ smoothly, then we have
 \begin{eqnarray}
\|(\nabla u)\chi_R\|_{L^2_{loc}}\leq C(n)\left(\|(\nabla u_0)\chi_R\|_{L^2_{loc}}+\frac{\|\nabla u_0\|_{L^2_{loc}}}{R}\right).\label{rk-L-loc-at-infty-Chen-Str}
\end{eqnarray}

 \end{prop}
 
 \begin{proof}
 The proof follows along the lines of the proof of Theorem \ref{theo-l2loc-Chen-Str}. Let $\phi_{x_0}$ be the cut-off function defined as previously and let us multiply the Homogeneous Chen-Struwe flow equation by $\phi_{x_0}^2\chi_R^2\partial_tu$. Then we integrate by parts in space and we integrate with respect to time. We get 

\begin{eqnarray*}
\|(\nabla u(t))\chi_R\|_{L^2_{loc}}^2&\leq& \|(\nabla u_0)\chi_R\|_{L^2_{loc}}^2+c_n\int_0^t\|(\nabla u(s))\chi_R\|_{L^2_{loc}}^2ds+\frac{c_n}{R^2}\int_0^t\|\nabla u(s)\|_{L^2_{loc}}^2ds\\
&\leq&\|(\nabla u_0)\chi_R\|_{L^2_{loc}}^2+c_n\int_0^t\|(\nabla u(s))\chi_R\|_{L^2_{loc}}^2ds+\frac{c_n}{R^2} \left(e^{c_nt}-1\right)\|\nabla u_0\|_{L^2_{loc}}^2,
\end{eqnarray*}
where we used the estimate (\ref{est-a-priori-Chen-Str-bis}) in the last line. A straightforward application of the Gronwall inequality leads to the expected result.

 \end{proof}
 In order to get a local entropy monotonicity formula, we need to localise the arguments in \cite{Chen-Struwe} since in our case the energy is infinite. For this purpose, let $z_0:=(x_0,t_0)\in\R^n\times \R_+$ and let $\phi_{x_0}:\mathbb{R}^n\rightarrow \mathbb{R}_+$ be a smooth function with compact support in $B(x_0,2)$ which equals $1$ on $B(x_0,1)$ and whose gradient is less than $c$. For $R\in(0,2^{-1}\cdot\sqrt{t_0})$, define as in [Chap. $7$, \cite{Lin-Wang}]:
 \begin{eqnarray*}
\Phi(u,z_0,R)&:=&\left.R^2\int_{\R^n}e_K(u)G_{z_0}\phi_{x_0}^2dx\right|_{t_0-R^2},\\
\Psi(u,z_0,R)&:=&\int_{t_0-4R^2}^{t_0-R^2}e_K(u)G_{z_0}\phi_{x_0}^2dxdt,
\end{eqnarray*}
where 
\begin{eqnarray*}
G_{z_0}(x,t):=\frac{1}{(4\pi|t-t_0|)^{\frac{n}{2}}}\exp\left(-\frac{|x-x_0|^2}{4|t-t_0|}\right),\quad x\in\R^n,\quad t<t_0,
\end{eqnarray*}
denotes the backward heat kernel on $\R^n$. With the notations from the previous sections: $G_{z_0}(x,t)=k_{t_0-t}(x,x_0).$
We start with a Pohozaev identity like:

\begin{prop}[Pohozaev identity]\label{Poho-Chen-Str}
Let $u:\R^n\times(0,T)\rightarrow \R^m$ be a smooth solution to the Homogeneous Chen-Struwe flow (with parameter $K>0$). Then, for any $C^1$ vector field $\zeta:\R^n\times(0,T)\rightarrow \R^m$ compactly supported in space,
\begin{eqnarray*}
<\partial_tu,\nabla_{\zeta}u>_{L^2(\R^n\times[t_1,t_2])}&=&<e_K(u),\div\zeta>_{L^2(\R^n\times[t_1,t_2])}\\
&-& \frac{1}{2}<\mathcal{L}_{\zeta}(\eucl),\nabla u\otimes\nabla u>_{L^2(\R^n\times[t_1,t_2])},\\
\end{eqnarray*}
where $\mathcal{L}_{\zeta}(\eucl)$ denotes the Lie derivative of the Euclidean metric along the vector field $\zeta$:
\begin{eqnarray*}
\frac{1}{2}<\mathcal{L}_{\zeta}(\eucl),\nabla u\otimes\nabla u>&:=&\nabla_i\zeta_j\nabla_iu_k\nabla_ju_k.
\end{eqnarray*}

 And, for any $C^1$ function $\theta:\R^n\times(0,T)\rightarrow \R$ compactly supported in space, and $0<t_1<t_2<T$,

\begin{eqnarray*}
&&\int_{L^2(\R^n\times[t_1,t_2])}\left(|\partial_tu|^2+\frac{K}{2t^2}\chi\left( d^2_{N}(u)\right)\right)\theta dxdt+\left[\int_{\R^n}e_K(u)\theta dx\right]_{t_1}^{t_2}=\\
&&\int_{\R^n\times[t_1,t_2]}e_K(u)\partial_t\theta-<\nabla_{\nabla \theta}u,\partial_tu> dxdt.
\end{eqnarray*}

\end{prop}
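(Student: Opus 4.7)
The plan is to test the Homogeneous Chen-Struwe equation against $\nabla_\zeta u$ (resp.\ $\theta\,\partial_t u$) and integrate by parts, as in the standard energy/Pohozaev identities for the harmonic map heat flow. The compact spatial support of $\zeta$ and $\theta$ kills all spatial boundary terms, and smoothness of $u$ on $\R^n\times(0,T)$ justifies every manipulation; the key algebraic point, common to both identities, is that the penalty term is a total derivative of $\tfrac{K}{2t}\chi(d_N^2(u))$ in the appropriate direction.

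For the first identity, I would multiply \eqref{Chen-Str-equ-evo} by $\nabla_\zeta u=\zeta_j\partial_j u$ and integrate over $\R^n\times[t_1,t_2]$. The parabolic term produces the left-hand side $\langle\partial_tu,\nabla_\zeta u\rangle_{L^2}$. For the Dirichlet piece, two integrations by parts in space yield
\[
\int_{\R^n}\Delta u\cdot\nabla_\zeta u\,dx \;=\; \int_{\R^n}\div\zeta\,\frac{|\nabla u|^2}{2}\,dx \;-\; \int_{\R^n}\nabla_i\zeta_j\,\nabla_i u_k\,\nabla_j u_k\,dx.
\]
For the penalty piece, the chain-rule identity
\[
\chi'(d_N^2(u))\,\nabla\!\left(\tfrac{d_N^2}{2}\right)(u)\cdot\partial_j u \;=\; \tfrac{1}{2}\,\partial_j\!\bigl(\chi(d_N^2(u))\bigr)
\]
rewrites the term as a spatial divergence, so integrating by parts against $\zeta$ produces $-\int_{\R^n}\div\zeta\cdot\tfrac{K}{2t}\chi(d_N^2(u))\,dx$. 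Symmetrising $i\leftrightarrow j$ turns the remaining $\nabla_i\zeta_j\,\nabla_i u_k\,\nabla_j u_k$ into $\tfrac12\,\mathcal L_\zeta(\eucl)_{ij}\,\nabla_i u_k\,\nabla_j u_k$, and collecting the two divergence terms into $e_K(u)\,\div\zeta$ yields the first identity after integrating in $t$.

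For the second identity, I would multiply \eqref{Chen-Str-equ-evo} by $\theta\,\partial_t u$ and integrate over $\R^n\times[t_1,t_2]$. The Dirichlet term gives, by spatial integration by parts,
\[
\int_{\R^n}\theta\,\Delta u\cdot\partial_t u\,dx \;=\; -\frac{d}{dt}\!\int_{\R^n}\!\theta\,\frac{|\nabla u|^2}{2}\,dx \;+\; \int_{\R^n}\!\partial_t\theta\,\frac{|\nabla u|^2}{2}\,dx \;-\; \int_{\R^n}\!\partial_t u\cdot\nabla_{\nabla\theta}u\,dx.
\]
For the penalty term, the chain rule $\partial_t\chi(d_N^2(u))=2\chi'(d_N^2(u))\,\nabla(d_N^2/2)(u)\cdot\partial_t u$, together with the product rule applied to $\tfrac{K\theta}{2t}\chi(d_N^2(u))$, produces a total $t$-derivative, an extra $\tfrac{K\theta}{2t^2}\chi(d_N^2(u))$ coming from $\partial_t(1/t)$, and a $\tfrac{K\partial_t\theta}{2t}\chi(d_N^2(u))$ contribution. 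Grouping all total $t$-derivatives into $\tfrac{d}{dt}\!\int\!\theta\,e_K(u)\,dx$ and all $\partial_t\theta$ contributions into $e_K(u)\,\partial_t\theta$, integrating from $t_1$ to $t_2$, and moving the positive $\tfrac{K\theta}{2t^2}\chi(d_N^2(u))$ term to the left, reproduces exactly the stated identity.

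There is no genuine analytic obstacle: the computation is essentially bookkeeping, with the only subtle point being to recognise the penalty term as an exact divergence (in space or time) of $\tfrac{K}{2t}\chi(d_N^2(u))$, which is precisely what lets the penalty contribution combine with $\tfrac{|\nabla u|^2}{2}$ to form the density $e_K(u)$ on the right-hand side in both statements.
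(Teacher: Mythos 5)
Your proof is correct and follows essentially the same route as the paper: test the flow equation against $\nabla_\zeta u$ and $\theta\,\partial_t u$, integrate by parts in space (resp.\ time), and use the chain-rule identity $\chi'(d_N^2(u))\,\nabla(d_N^2/2)(u)\cdot\partial_j u=\tfrac12\partial_j\chi(d_N^2(u))$ to absorb the penalty term into the density $e_K(u)$. The paper packages the two test directions as the first variation of $u_\tau(x,t)=u(x+\tau\zeta,t+\tau\theta)$, but the resulting computation is the one you carry out.
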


\begin{proof}
Define $u_{\tau}(x,t):=u(x+\tau \zeta(x,t),t+\tau\theta(x,t))$ where $\zeta(\cdot,t)$ is a smooth vector field compactly supported in space for $\tau$ small and $\theta(\cdot,t)$ is a smooth function compactly supported in space. Then,
$\partial_{\tau}u_{\tau}|_{\tau=0}=\nabla _{\zeta}u+\theta\partial_tu.$
Then, on one hand:
\begin{eqnarray*}
\int_{\R^n\times[t_1,t_2]}<\partial_tu,\nabla_{\zeta}u>dxdt&=&\int_{\R^n\times[t_1,t_2]}\left<\Delta u,\nabla_{\zeta}u\right>dxdt\\
&&-\int_{\R^n\times[t_1,t_2]}\left<\frac{K}{t}\chi'\left( d^2_{N}(u)\right)\nabla\left( \frac{d^2_{N}}{2}\right)(u),\nabla_{\zeta}u\right>dxdt.
\end{eqnarray*}
Now, by integrating by parts:
\begin{eqnarray*}
\int_{\R^n\times[t_1,t_2]}\left<\Delta u,\nabla_{\zeta}u\right>dxdt&=&\frac{1}{2}\int_{\R^n\times[t_1,t_2]}|\nabla u|^2\div\zeta-\mathcal{L}_{\zeta}(\nabla u,\nabla u)dxdt,
\end{eqnarray*}
and, similarly,
\begin{eqnarray*}
-\int_{\R^n\times[t_1,t_2]}\left<\frac{K}{t}\chi'\left( d^2_{N}(u)\right)\nabla\left( \frac{d^2_{N}}{2}\right)(u),\nabla_{\zeta}u\right>dxdt&=&\frac{1}{2}\int_{\R^n\times[t_1,t_2]} \frac{K}{t}\chi\left( d^2_{N}(u)\right)\div\zeta dxdt.
\end{eqnarray*}
Therefore, 
\begin{eqnarray*}
<\partial_tu,\nabla_{\zeta}u>_{L^2(\R^n\times[t_1,t_2])}=<e_K(u),\div\zeta>_{L^2(\R^n\times[t_1,t_2])}-\frac{1}{2}<\mathcal{L}_{\zeta}(\eucl),\nabla u\otimes\nabla u>_{L^2(\R^n\times[t_1,t_2])}.
\end{eqnarray*}

On the other hand:
\begin{eqnarray*}
\int_{\R^n\times[t_1,t_2]}<\partial_tu,\theta\partial_tu>dxdt&=&\int_{\R^n\times[t_1,t_2]}\left<\Delta u,\theta\partial_tu\right>dxdt\\
&&-\int_{\R^n\times[t_1,t_2]}\left<\frac{K}{t}\chi'\left( d^2_{N}(u)\right)\nabla\left( \frac{d^2_{N}}{2}\right)(u),\theta\partial_tu\right>dxdt\\
&=&-\int_{\R^n\times[t_1,t_2]}\partial_te_K(u)\theta dxdt-\int_{\R^n\times[t_1,t_2]}<\nabla_{\nabla \theta}u,\partial_tu>dxdt\\
&&-\int_{\R^n\times[t_1,t_2]}\frac{K}{2t^2}\chi\left( d^2_{N}(u)\right)\theta dxdt\\
&=&-\left[\int_{\R^n}e_K(u)\theta dx\right]_{t_1}^{t_2}-\int_{\R^n\times[t_1,t_2]}<\nabla_{\nabla \theta}u,\partial_tu>dxdt\\
&&+\int_{\R^n\times[t_1,t_2]}e_K(u)\partial_t\theta dxdt-\int_{\R^n\times[t_1,t_2]}\frac{K}{2t^2}\chi\left( d^2_{N}(u)\right)\theta dxdt.
\end{eqnarray*}

\end{proof}

We are now in a position to prove a local entropy monotonicity formula.
 \begin{prop}[A local entropy monotonicity formula]\label{Mono-For-Chen-Str}
 Let $u_0:\mathbb{R}^n\rightarrow (N,g)\subset \R^m$ be in $H^1_{loc}(\R^n,\R^m)$. Let $(u(t))_{t>0}$ be a smooth solution to the Homogeneous Chen-Struwe flow coming out of $u_0$ such that $(E_{K,x_0}(u(t))_{t>0}$ is continuous at $t=0$ for every $x_0\in\R^n$. Then, for any $z_0=(x_0,t_0)\in\R^n\times \R_+$ and $0<R\leq R_0<\sqrt{t_0}\leq 1$,
 \begin{eqnarray*}
&&\Phi(u,z_0,R)\leq \Phi(u,z_0,R_0)+c(R_0-R)\|\nabla u_0\|^2_{L^2(B(x_0,2))},\\
&&\Psi(u,z_0,R)\leq \Psi(u,z_0,R_0)+c(R_0-R)\|\nabla u_0\|^2_{L^2(B(x_0,2))}.
\end{eqnarray*}

 \end{prop}
 \begin{proof}
 Choose $\zeta(x,t):=G_{z_0}(x,t)\phi_{x_0}^2(x)\cdot (x-x_0)$ for $0<t<t_0$ and $x\in\R^n$. Then,
 \begin{eqnarray*}
\div \zeta&=&\phi^2_{x_0}\div(G_{z_0}\cdot (x-x_0))+<\nabla \phi_{x_0}^2,G_{z_0}\cdot (x-x_0)>\\
&=&\phi^2_{x_0}\left(-\frac{|x-x_0|^2}{2|t-t_0|}+n\right)G_{z_0}+<\nabla \phi_{x_0}^2,G_{z_0}\cdot (x-x_0)>,\\
\nabla_i(G_{z_0}(x-x_0))_j&=&\left(-\frac{(x-x_0)_i(x-x_0)_j}{2|t-t_0|}+\delta_{ij}\right)G_{z_0},\\
\frac{1}{2}<\mathcal{L}_{\zeta}(\eucl),\nabla u\otimes\nabla u>&:=&\left(-\frac{|\nabla_{x-x_0}u|^2}{2|t-t_0|}+|\nabla u|^2\right)G_{z_0}\phi^2_{x_0}+<\nabla_{\nabla \phi^2_{x_0}}u,\nabla_{x-x_0}u>G_{z_0}.
\end{eqnarray*}
Therefore, by applying Proposition \ref{Poho-Chen-Str}, one gets
\begin{eqnarray*}
&&\frac{1}{2}\int_{\R^n\times[t_1,t_2]}\left[<\partial_tu,\nabla_{x-x_0}u>-\left(-\frac{|x-x_0|^2}{2|t-t_0|}+n\right)e_K(u)-\frac{|\nabla_{x-x_0}u|^2}{2|t-t_0|}+|\nabla u|^2\right]G_{z_0}\phi^2_{x_0}dxdt=\\
&&\frac{1}{2}\int_{\R^n\times[t_1,t_2]}\left(<\nabla \phi_{x_0}^2,x-x_0>e_K(u)-<\nabla_{\nabla \phi^2_{x_0}}u,\nabla_{x-x_0}u>\right)G_{z_0}dxdt.
\end{eqnarray*}
Now, by using Proposition \ref{Poho-Chen-Str} with $\theta(x,t):=(t_0-t)G_{z_0}\phi_{x_0}^2$ for $0<t<t_0$ and $x\in\R^n$, one obtains
\begin{eqnarray*}
&&\partial_t\theta=\left(\frac{n-2}{2}-\frac{|x-x_0|^2}{4|t-t_0|}\right)G_{z_0}\phi^2_{x_0},\\
&&\nabla \theta=-\frac{x-x_0}{2}G_{z_0}\phi_{x_0}^2+(t_0-t)G_{z_0}\nabla\phi_{x_0}^2,\\
&&\int_{\R^n\times[t_1,t_2]}|\partial_tu|^2(t_0-t)G_{z_0}\phi_{x_0}^2dxdt+\left[(t_0-t)\int_{\R^n}e_K(u)G_{z_0}\phi_{x_0}^2dx\right]_{t_1}^{t_2}\leq\\
&&\int_{\R^n\times[t_1,t_2]}\left\{e_K(u)\left(\frac{n-2}{2}-\frac{|x-x_0|^2}{4|t-t_0|}\right)+\frac{1}{2}<\partial_tu,\nabla_{x-x_0}u>\right\}G_{z_0}\phi^2_{x_0}dxdt\\
&&-\int_{\R^n\times[t_1,t_2]}(t_0-t)<\partial_tu,\nabla_{\nabla \phi_{x_0}^2}u>G_{z_0}dxdt.
\end{eqnarray*}
Subtracting the two previous identities yields
\begin{eqnarray*}
&&\int_{\R^n\times[t_1,t_2]}(t_0-t)\left|\partial_tu-\nabla_{\frac{x-x_0}{2(t_0-t)}}u\right|^2G_{z_0}\phi^2_{x_0}dxdt+\left[(t_0-t)\int_{\R^n}e_K(u)G_{z_0}\phi_{x_0}^2dx\right]_{t_1}^{t_2}\leq\\
&&-\int_{\R^n\times[t_1,t_2]}\frac{K}{2t}\chi\left( d^2_{N}(u)\right)G_{z_0}\phi_{x_0}^2dxdt-\int_{\R^n\times[t_1,t_2]}(t_0-t)<\partial_tu,\nabla_{\nabla \phi_{x_0}^2}u>G_{z_0}dxdt\\
&&-\frac{1}{2}\int_{\R^n\times[t_1,t_2]}\left(<\nabla \phi_{x_0}^2,x-x_0>e_K(u)-<\nabla_{\nabla \phi^2_{x_0}}u,\nabla_{x-x_0}u>\right)G_{z_0}dxdt\\
&\leq&-\frac{1}{2}\int_{\R^n\times[t_1,t_2]}\left((t_0-t)<\nabla_{\nabla \phi^2_{x_0}}u,\partial_tu-\nabla_{\frac{x-x_0}{2(t_0-t)}}u>+<\nabla \phi_{x_0}^2,x-x_0>e_K(u)\right)G_{z_0}dxdt\\
&\leq&\frac{1}{2}\int_{\R^n\times[t_1,t_2]}(t_0-t)\left|\partial_tu-\nabla_{\frac{x-x_0}{2(t_0-t)}}u\right|^2G_{z_0}\phi^2_{x_0}dxdt+c\int_{\supp(\nabla \phi_{x_0})\times[t_1,t_2]}e_K(u)dxdt.
\end{eqnarray*}
On $\supp(\nabla \phi_{x_0})$ we have $G_{z_0}(x,t)\leq C$ for all $t\in[0,t_0)$ and therefore the last term can be bounded as follows with the help of Theorem \ref{theo-l2loc-Chen-Str} as follows
\begin{eqnarray*}
\int_{\supp(\nabla \phi_{x_0})\times[t_1,t_2]}e_K(u)dxdt\leq c\|\nabla u_0\|^2_{L^2(B(x_0,2))}.
\end{eqnarray*}
This in turn implies the expected monotonicity result for $\Phi$. The monotonicity result for $\Psi$ follows from the one for $\Phi$.

 \end{proof}

\subsection{An $\varepsilon$-regularity theorem}\label{sec-eps-reg-theo}

\begin{theo}\label{eps-reg-Chen-Str}
Let $u_0:\R^n\rightarrow (N,g)\subset\R^m$ be a $0$-homogeneous Lipschitz map. 
Then there exists a radius $R=R(\|\nabla u_0\|_{L^2_{loc}},n,m)>0$ and a constant $C=C(\|\nabla u_0\|_{L^2_{loc}},n,m)>0$ such that if $u$ is a smooth solution of the Homogeneous Chen-Struwe flow with parameter $K>0$ coming out of $u_0$ and such that $(E_{K,x_0}(u(t))_{t>0}$ is continuous at $t=0$ for any $x_0\in\R^n$, $u$ satisfies
\begin{eqnarray*}
e_K(u)(x,1)\leq\frac{C}{|x|^2},\quad |x|\geq R.
\end{eqnarray*}

Moreover, there exists a constant $\varepsilon_0>0$ depending on $n$ and $N$ only such that if for some $ R\in(0,\min\{\sqrt{t_0},1\})$, $z_0=(x_0,t_0)\in\R^n\times\R_+$, $u$ satisfies
$$\Psi(u,z_0,R)<\varepsilon_0,$$
then $$\sup_{P_{\delta R}(z_0)}e_K(u)\leq C(\delta R)^{-2},$$
for some universal positive constant $C$ and some positive constant $\delta$ depending on $n$, $m$, $\|\nabla u_0\|_{L^2_{loc}}$ and $\min\{R,1\}$. 
\end{theo}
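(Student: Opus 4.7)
The proof splits into two steps: first the $\varepsilon$-regularity statement (the second display of the theorem), then the decay at infinity (the first display) deduced from it by choosing parameters adapted to $u_0$.

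For the $\varepsilon$-regularity, I follow the classical Schoen--Struwe blow-up scheme as implemented in \cite{Chen-Struwe}. Set
\[
\sigma_0 := \sup_{z \in P_{R/2}(z_0)} ((R/2) - d_P(z, z_0))^2 \, e_K(u)(z),
\]
where $d_P$ denotes parabolic distance. By smoothness the supremum is attained at some $z_1 = (x_1, t_1)$; write $e_0 := e_K(u)(z_1)$ and $r_1 := R/2 - d_P(z_1, z_0) > 0$, so that $\sigma_0 = r_1^2 e_0$ and $e_K(u) \leq 4 e_0$ on $P_{r_1/2}(z_1)$ by definition. I argue by contradiction: if $\sigma_0$ is large, rescale via $\lambda := 1/\sqrt{e_0}$ by setting $v(y, s) := u(x_1 + \lambda y, t_1 + \lambda^2 s)$. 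The function $v$ solves a rescaled version of the Homogeneous Chen--Struwe flow; its natural pointwise energy $e(v)$ satisfies $e(v)(0) = 1$, $e(v) \leq 4$ on $P_{\sqrt{\sigma_0}/2}(0)$, and inherits the Bochner inequality of Proposition \ref{Boch-Form-Chen-Str}. On the unit cylinder $P_1(0)$ the nonlinear estimate $(\partial_s - \Delta) e(v) \leq C e(v)^2$ linearises to $(\partial_s - \Delta) e(v) \leq 4C e(v)$ with coefficient independent of $\sigma_0$, so a parabolic Moser mean value inequality gives
\[
1 = e(v)(0) \leq C_1 \fint_{P_1(0)} e(v) \, dy \, ds.
\]
Unwinding the rescaling turns the right-hand side into a rescaled integral of $e_K(u)$ over a small parabolic cylinder centred at $z_1$, which the monotonicity formula of Proposition \ref{Mono-For-Chen-Str} bounds by $C_2 \Psi(u, z_0, R) + C_2 R \|\nabla u_0\|^2_{L^2(B(x_0, 2))}$. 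Choosing $\varepsilon_0$ sufficiently small yields the contradiction $1 \leq 1/2$; thus $\sigma_0 \leq C$ universally. Evaluating at $z = z_0$ and running the same argument on concentric subcylinders produces the stated pointwise bound $\sup_{P_{\delta R}(z_0)} e_K(u) \leq C(\delta R)^{-2}$.

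For the decay, fix $|x_0|$ large and put $z_0 = (x_0, 1)$. The $0$-homogeneity and Lipschitz character of $u_0$ give $|\nabla u_0|(y) \leq C |y|^{-1}$, so $\|(\nabla u_0) \chi_{|x_0|/2}\|^2_{L^2_{loc}} \leq C |x_0|^{-2}$. Proposition \ref{theo-l2loc-at-infty-a-priori-Chen-Str} then yields
\[
\sup_{t \in (0, 1]} \|\nabla u(t)\|^2_{L^2(B(x_0, 1))} \leq C |x_0|^{-2}.
\]
A parallel localisation of Theorem \ref{theo-l2loc-Chen-Str}, exploiting $u_0(y) \in N$ for all $y$, controls the penalisation contribution to $\int_{B(x_0, 1) \times (0, 1)} e_K(u)$ by $O(|x_0|^{-2})$ as well. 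Standard Gaussian bounds on $G_{z_0}$ then produce $\Psi(u, z_0, R_0) \leq C|x_0|^{-2}$ for a radius $R_0 \in (0, 1)$ depending only on $n$, $m$, $\|\nabla u_0\|_{L^2_{loc}}$. For $|x_0| \geq R$ sufficiently large this is less than $\varepsilon_0$, so the $\varepsilon$-regularity theorem applies. Inspecting the blow-up argument one extracts the sharper form $e_K(u)(z_0) \leq C_3 R_0^{-2} \Psi(u, z_0, R_0)$ (the Moser mean value inequality yields this automatically), and hence $e_K(u)(x_0, 1) \leq C |x_0|^{-2}$.

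The main technical obstacle is running the Moser mean value inequality on the rescaled cylinder uniformly in the Chen--Struwe parameter $K$: after rescaling, the time-dependent weight $K/t$ becomes $K\lambda^2/(t_1 + \lambda^2 s)$, and one needs a lower bound on $t_1$ in terms of $R$ (automatic from $t_0 \geq R^2$ and $z_1 \in P_{R/2}(z_0)$) to keep this coefficient bounded on $P_1(0)$; this is what forces the constant $\delta$ to depend on $\min\{R, 1\}$. A secondary subtlety is localising Theorem \ref{theo-l2loc-Chen-Str} at infinity to capture the $|x_0|^{-2}$ rate for the penalisation term.
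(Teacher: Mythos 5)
Your proposal is correct and follows essentially the same route as the paper: the Schoen--Struwe maximal-function selection, rescaling by $1/\sqrt{e_0}$, the Bochner inequality of Proposition \ref{Boch-Form-Chen-Str} combined with the Moser--Harnack mean-value inequality, and the monotonicity formula of Proposition \ref{Mono-For-Chen-Str}, with the decay at infinity extracted by inserting the Lipschitz bound $\|\nabla u_0\|_{L^2(B(x_1,2))}\leq C/(1+|x_1|)$ together with Proposition \ref{theo-l2loc-at-infty-a-priori-Chen-Str}. The only differences are cosmetic: you phrase the maximal-function step via a parabolic distance and a contradiction rather than the paper's direct $\max_{\sigma}(r_1-\sigma)^2\sup_{P_\sigma}e_K$ argument, and the ``sharper form'' you extract at the end silently drops the $\|\nabla u_0\|^2_{L^2(B(x_0,2))}$ error term that the monotonicity step actually produces --- harmless here since you have separately shown that term is also $O(|x_0|^{-2})$, but worth stating explicitly.
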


\begin{proof}
The proof is a straightforward adaptation of the corresponding proof in the case of the Chen-Struwe flow. We mention the main steps: see [Lemma $2.4$, \cite{Chen-Struwe}] for more details.

 First of all, let $(x_1,t_1)=:z_1\in\R^n\times\R_+$, $0<R<2^{-1}\sqrt{t_1}$ and let $r_1:=2\delta R$ with $\delta\in(0,1/4)$ where $\delta$ will be defined later. Let $r,\sigma\in[0,r_1)$ such that $r+\sigma<r_1$ and let $z_0:=(x_0,t_0)\in P_r(z_1).$ Thanks to the monotonicity formula from Proposition \ref{Mono-For-Chen-Str}, then one shows that for a given positive $\varepsilon$, there exists a positive $\delta(\varepsilon)$ such that
 \begin{eqnarray}
\sigma^{-n}\int_{P_{\sigma}(z_0)}e_K(u)dxdt\leq c\Psi(u,z_1,R)+c( (R-\sigma)+\varepsilon)\|\nabla u_0\|^2_{L^2(B(x_1,2))}.\label{all-scale-energy}
\end{eqnarray}

Now, by smoothness of the solution, there exists $\sigma_0\in[0,r_1)$ and $(x_0,t_0)\in \overline{P_{\sigma_0}(z_1)}$ such that
\begin{eqnarray*}
(r_1-\sigma_0)^2e_K(u)(x_0,t_0)=\max_{0\leq\sigma\leq r_1}(r_1-\sigma)^2\sup_{\overline{P_{\sigma}(z_1)}} e_K(u).
\end{eqnarray*}
If one defines $\rho_0:=\frac{1}{2}(r_1-\sigma_0)$, $r_0:=\sqrt{e_0}\rho_0$, and $$v(x,t):=u\left(\frac{x}{\sqrt{e_0}}+x_0,\frac{t}{e_0}+t_0\right),\quad (x,t)\in P_{r_0}(0,0),$$
then, $v$ satisfies
\begin{eqnarray*}
&&(\partial_t-\Delta)v=-\frac{K}{e_0t_0+t}\chi'\left( d^2_{N}(v)\right)\nabla\left( \frac{d^2_{N}}{2}\right)(v)=0,\\
&&e_K(v)(0,0)=1,\quad \sup_{P_{r_0}(0,0)}e_K(v)\leq 4.
\end{eqnarray*}
By Proposition \ref{Boch-Form-Chen-Str} we obtain
\begin{eqnarray*}
(\partial_t-\Delta)e_K(v)\leq 4Ce_K(v),\quad \mbox{on $P_{r_0}(0,0)$},
\end{eqnarray*}
and Moser's Harnack inequality together with (\ref{all-scale-energy}) shows that $r_0\leq 1$ if $\Psi(u,z_1,R)$ is small enough (independently of $K$).

The final step consists in applying Moser's Harnack inequality again to $v$ in order to get
\begin{eqnarray*}
\max_{0\leq\sigma\leq r_1}(r_1-\sigma)^2\sup_{P_{\sigma}}e_K(u)&\leq& 4\rho_0^2e_0=4r_0^2\\
&\leq& c\rho_0^{-n}\int_{P_{\rho_0}(x_0,t_0)}e_K(u)dxdt\\
&\leq& c\Psi(u,z_1,R)+c(R+\varepsilon)\|\nabla u_0\|^2_{L^2(B(x_1,2))}.
\end{eqnarray*}
 
 The result follows by noticing that if $u_0$ is Lipschitz then by the chain rule
 \begin{eqnarray*}
\|\nabla u_0\|_{L^2(B(x_1,2))}\leq \frac{C}{1+|x_1|}.
\end{eqnarray*}
In particular, thanks to Proposition \ref{theo-l2loc-at-infty-a-priori-Chen-Str}, if $x_1$ is sufficiently far from the origin, by choosing $t_1:=1$, $R:=1/4$, both $\Psi(u,z_1,1/4)$ and $\|\nabla u_0\|_{L^2(B(x_1,2))}$ can be made arbitrarily small, independently of $K$.

The second statement can be proved similarly.

\end{proof}
\section{A priori estimates for Chen-Struwe expanding solutions} \label{sec-a-priori-chen-struwe-expanders}
\subsection{$C^0$ a priori estimates}\label{sec-a-priori-chen-struwe}

We start by establishing an a priori $C^0$ bound for Chen-Struwe expanding solutions $U_K^{\sigma}$  uniform in $K>0$ and $\sigma\in[0,1]$.

\begin{prop}\label{a-priori-C0-Chen-Str}
 There is a positive constant $M$ uniform in $\sigma\in[0,1]$ and $K>0$ such that if $V\in X$ satisfies $F_K^{\sigma}(V)=V$, then $\|V\|_{C^0}\leq M.$
 \end{prop}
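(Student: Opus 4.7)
First I would rewrite the fixed point equation $V = F_K^\sigma(V)$ as a static equation for $U := U_0^\sigma + V$. Using that $\Delta_f U_0^\sigma = 0$ (as noted in the proof of Lemma \ref{lemma-first-app-Chen-Str}) and cancelling the linear piece in the definition of $F_K^\sigma$, one gets
\[
\Delta_f U \;=\; K\chi'\left(d_N^2(U)\right)\nabla\left(\tfrac{d_N^2}{2}\right)(U).
\]
Since $U_0^\sigma$ is the caloric extension of a map $u_0^\sigma:\R^n\to N$, the maximum principle for the heat equation yields $\|U_0^\sigma\|_{C^0}\le \sup_{p\in N}|p|=:R_N$, uniformly in $\sigma\in[0,1]$. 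It therefore suffices to produce a bound on $\|U\|_{C^0}$ uniform in $\sigma$ and $K$.

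The key computation is that $|U|^2$ is subharmonic with respect to $\Delta_f$ outside the smallest ball containing $N$. Indeed,
\[
\Delta_f |U|^2 \;=\; 2|\nabla U|^2 + 2K\chi'\left(d_N^2(U)\right)\,\langle U,\nabla(d_N^2/2)(U)\rangle.
\]
Outside the tubular neighbourhood $T_{2\delta_N}(N)$ the factor $\chi'(d_N^2(U))$ vanishes by construction of $\chi$. Inside the tubular neighbourhood one has $\nabla(d_N^2/2)(U)=U-\Pi_N(U)$, and Cauchy--Schwarz gives
\[
\langle U,\,U-\Pi_N(U)\rangle \;\ge\; |U|^2 - |U|\,|\Pi_N(U)| \;\ge\; |U|\bigl(|U|-R_N\bigr).
\]
Combining this with $\chi'\ge 0$ (recall $\chi$ is non-decreasing), I conclude that $\Delta_f |U|^2 \ge 0$ on the open set $\Omega:=\{|U|>R_N\}$.

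Next I would apply the strong maximum principle for the uniformly elliptic operator $\Delta_f$ on $\Omega$. Since $V\in X$, $|V|(x)\to 0$ as $|x|\to\infty$, and since $|U_0^\sigma|\le R_N$, we have $\limsup_{|x|\to\infty}|U|(x)\le R_N$. If $\Omega$ were non-empty, its connected component $\Omega_0$ meeting the supremum of $|U|^2$ would, by the strong maximum principle, have $|U|^2$ constant and strictly greater than $R_N^2$ on $\Omega_0$; but $\Omega_0$ either has non-empty boundary (where $|U|^2=R_N^2$) or is unbounded (where $|U|^2\to$ a value $\le R_N^2$ along sequences to infinity), both of which contradict the constancy. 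Hence $\Omega=\emptyset$, i.e., $|U|\le R_N$ everywhere, and
\[
\|V\|_{C^0} \;\le\; \|U\|_{C^0} + \|U_0^\sigma\|_{C^0} \;\le\; 2R_N \;=:\; M.
\]

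The argument is fairly short; the main conceptual point---and the reason the bound is uniform in $K$---is that the Chen--Struwe penalty always points outward from $N$ in the radial sense relative to the projection onto $N$, making $|U|^2$ an $f$-subharmonic function on $\{|U|>R_N\}$ independently of how large $K$ is. The only technical care needed concerns ruling out a maximum ``escaping to infinity'' along an unbounded component of $\Omega$, which is handled cleanly by the polynomial decay of $V$ built into the Banach space $X$.
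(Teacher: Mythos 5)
Your proof is correct, and it takes essentially the same route as the paper's: both reduce the fixed-point identity to the static Chen--Struwe equation for $U=U_0^\sigma+V$ and then run a maximum principle on $|U|^2$, using the decay of $V\in X$ at infinity to locate and then rule out an interior maximum. The genuine difference is in how the penalty term is handled. The paper only exploits that $\chi'(d_N^2(U))$ vanishes when $d_N(U)>2\delta_N$, which forces a case split at the maximum point (triangle inequality when close to $N$, strong maximum principle on the set $\{d_N(U)>2\delta_N\}$ when far, and a separate discussion of boundary maxima on balls $B(0,R)$ with $R\to\infty$). You instead observe the sign identity $\langle U, U-\Pi_N(U)\rangle \ge |U|\,(|U|-R_N)$ on the tubular neighbourhood, which makes $|U|^2$ globally $\Delta_f$-subharmonic on the whole open set $\{|U|>R_N\}$, including the region between $N$ and the outer rim of the tubular neighbourhood; this eliminates the case split, gives the slightly sharper bound $\|U\|_{C^0}\le R_N$ rather than $R_N+2\delta_N$, and packages the whole argument as one clean application of the strong maximum principle plus the decay at infinity. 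One small stylistic remark: the final dichotomy (``either $\partial\Omega_0\neq\emptyset$ or $\Omega_0$ is unbounded'') is a bit misleading, since any proper open subset of $\R^n$ has non-empty boundary; the boundary argument alone already furnishes the contradiction, and the unboundedness clause is redundant.
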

 \begin{proof}
As $F_K^{\sigma}(V)=V$ we know that $U^{\sigma}:=U_0^{\sigma}+V$ solves the static Homogeneous Chen-Struwe flow $$\Delta_fU^{\sigma}=K\chi'\left( d^2_{N}(U^{\sigma})\right)\nabla\left( \frac{d^2_{N}}{2}\right)(U^{\sigma}).$$ In particular,

\begin{eqnarray*}
\Delta_f|U^{\sigma}|^2\geq 2|\nabla U^{\sigma}|^2+2\left<K\chi'\left( d^2_{N}(U^{\sigma})\right)\nabla\left( \frac{d^2_{N}}{2}\right)(U^{\sigma}),U^{\sigma}\right>.
\end{eqnarray*}

Next we fix a radius $R>0$ and consider $\max_{B(0,R)}|U^{\sigma}|$. If this maximum is attained at an interior point $x_R$ of $B(0,R)$, then we consider two cases.

 Either $d_{N}(U^{\sigma}(x_R))\leq 2\cdot \delta_N$ which implies that $\max_{B(0,R)}|U^{\sigma}|$ is uniformly bounded by the triangular inequality.
 
 Or $d_{N}(U^{\sigma}(x_R))> 2\cdot \delta_N$ and this implies by the strong maximum principle applied to the previous differential inequality that $|U^{\sigma}|$ is constant and that $\nabla U^{\sigma}=0$ on a neighborhood of $x_R$. Therefore, $U^{\sigma}$ is constant on a neighborhood of $x_R$. By connectedness, $U^{\sigma}$ is constant on $B(0,R)$. As $U^{\sigma}$ converges to $u_0^{\sigma}$ at infinity, the term $\sup_{\partial B(0,R)}\bar{d}_N(U^{\sigma})$ goes to $0$ as $R$ goes to $+\infty$. Consequently, this case is impossible if $R$ is large enough.
 
 This discussion ends the proof of the $C^0$ estimate. Moreover, this last fact also yields the desired estimate if the maximum is attained on the boundary.

\end{proof}
By interior parabolic Schauder estimates, one has the following corollary.
  \begin{coro}\label{para-reg-coro-Che-Str}
 For any $k\geq 0$, there is a positive constant $M(K,k)$ uniform in $\sigma\in[0,1]$ and $K>0$ such that if $V\in X$ is a fixed point of the map $F_K^{\sigma}$ then $\|V\|_{C^k}\leq M(K,k).$
 \end{coro}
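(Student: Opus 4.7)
The plan is to convert the static equation solved by $U^{\sigma} := U_0^{\sigma} + V$ into a time-dependent parabolic equation via the expander ansatz, and then bootstrap $C^k$ bounds by interior parabolic Schauder estimates. First I would combine the $C^0$ estimate $\|V\|_{C^0} \le M$ of Proposition \ref{a-priori-C0-Chen-Str} with the uniform $L^\infty$ bound $\|U_0^{\sigma}\|_{L^\infty} \le \|u_0^\sigma\|_{L^\infty} \le C(N)$ supplied by Lemma \ref{lemma-first-app-Chen-Str}, to conclude that $U^\sigma$ is bounded in $L^\infty(\R^n)$ by a constant independent of both $\sigma \in [0,1]$ and $K>0$.

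Next I would pass to the corresponding time-dependent expander $u(x,t) := U^\sigma(x/\sqrt{t})$, which by construction solves the Chen-Struwe flow
$$\partial_t u = \Delta u - \frac{K}{t}\,\chi'\!\left(d_N^2(u)\right) \nabla\!\left(\frac{d_N^2}{2}\right)(u)$$
on $\R^n \times (0,+\infty)$. On the fixed slab $\R^n \times [1/2, 2]$ the prefactor $K/t$ is bounded by $2K$, and since $u$ takes values in a fixed bounded subset of $\R^m$ and $\chi, d_N^2$ are smooth near that set, the right-hand side is bounded in $L^\infty$ by a constant $C(K,N)$. Interior parabolic Schauder estimates applied to this bounded classical solution on parabolic cylinders $P(x_0, 1/2)$ centered at $(x_0, 1)$ produce, after a first application, a uniform $C^{\alpha, \alpha/2}$ bound; iterating the bootstrap — at each step the nonlinear right-hand side gains one more derivative of $u$, already controlled by the previous stage — yields $\|u\|_{C^{k,\alpha/2}(P(x_0,1/4))} \le C(K,k,N)$, uniformly in $x_0 \in \R^n$ and $\sigma \in [0,1]$ by translation-invariance in $x$. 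Evaluating at $t=1$ gives the desired bound $\|U^\sigma\|_{C^k(B(x_0,1/4))}\le C(K,k,N)$.

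Finally, $U_0^\sigma$ is the caloric extension at time $1$ of a bounded map into the compact target $N$, so the smoothing property of the heat kernel yields $\|U_0^\sigma\|_{C^k(\R^n)} \le C(k)$ uniformly in $\sigma$. Writing $V = U^\sigma - U_0^\sigma$ then gives the stated $C^k$ bound $\|V\|_{C^k} \le M(K,k)$. The argument is a routine Schauder bootstrap; the only delicate point is to maintain uniformity of the constants in $x_0$ and in $\sigma$, which is automatic once the time slab is fixed and one exploits the translation invariance of the equation in the space variable.
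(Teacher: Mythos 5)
Your proof is correct and takes essentially the same route as the paper, which disposes of the corollary in a single line ("by interior parabolic Schauder estimates"); you supply exactly the details that sentence glosses over. The conversion to the time-dependent expander on a fixed slab, where $K/t$ is bounded, the observation that the nonlinearity is a smooth function of the bounded quantity $u$, the translation-invariance giving uniformity in $x_0$, and the use of the smoothing of the caloric extension to subtract off $U_0^\sigma$ at the end are all the right ingredients and are all consistent with what the paper's one-line proof intends.
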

 \begin{rk}
 The constants $M(K,k)$ in Corollary \ref{para-reg-coro-Che-Str} may depend on $K$.
 \end{rk}
 
\subsection{A priori $C^0$ estimate at infinity}\label{sec-a-priori-weighted-est}
The purpose of this section is to establish a priori $C^0$ weighted estimates for Chen-Struwe expanding solutions that are  uniform in $\sigma\in[0,1]$. The bounds might depend on the parameter $K$.

\begin{prop}\label{a-priori-C0-weighted-Che-Str}
 There is a positive constant $M$ uniform in $\sigma\in[0,1]$ such that if $V\in X$ is a fixed point of $F_K^{\sigma}$ then $\|fV\|_{C^0}\leq M.$
 \end{prop}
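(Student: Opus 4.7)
The plan is to combine the uniform $C^0$ bound of Proposition \ref{a-priori-C0-Chen-Str}, the $\varepsilon$-regularity statement Theorem \ref{eps-reg-Chen-Str}, and a weighted maximum-principle argument in the spirit of Proposition \ref{propa-priori-weighted-diri}. Set $U^{\sigma}:=U_0^{\sigma}+V$ and $D:=d_N(U^{\sigma})$. Since $\Delta_f U_0^{\sigma}=0$ by Lemma \ref{lemma-first-app-Chen-Str}, the fixed-point equation for $V$ reads $\Delta_f V = K\chi'(D^2)\nabla(d_N^2/2)(U^{\sigma})$. Theorem \ref{eps-reg-Chen-Str} supplies a radius $R$ (possibly depending on $K$, which the statement allows) outside of which $D\le \delta_N$, $\chi'(D^2)\equiv 1$, $|\nabla d_N(U^{\sigma})|\equiv 1$ and $|\nabla U^{\sigma}|(x)\le C|x|^{-1}$. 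Inside $B(0,R)$ the desired bound $f|V|\le f(R)\,M_0$ is automatic from the $C^0$ estimate, so only the exterior region needs attention.

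The main difficulty is that $\varepsilon$-regularity by itself yields only $D(x)\le C|x|^{-1}$, while the barrier scheme of Proposition \ref{propa-priori-weighted-diri} applied to $V$ really requires $D(x)\le C|x|^{-2}$. To improve the decay, I would first derive the scalar equation for $D$. Using the chain rule and substituting $\Delta_f U^{\sigma}=KD\,\nabla d_N(U^{\sigma})$, one gets
\begin{eqnarray*}
(\Delta_f - K) D \;=\; \Hess(d_N)(U^{\sigma})(\nabla U^{\sigma},\nabla U^{\sigma}) \;=\; O(|\nabla U^{\sigma}|^2) \;=\; O(|x|^{-2}),
\end{eqnarray*}
because $\Hess(d_N)$ is bounded on the tubular neighbourhood $T_{\delta_N}(N)$. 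Combined with the computation $\Delta_f f^{-1}\le -c_n f^{-1}$ at infinity from the proof of Proposition \ref{propa-priori-weighted-diri}, the barrier $Af^{-1}$ satisfies $(\Delta_f-K)(Af^{-1})\le -A(c_n+K)f^{-1}$. Choosing $A$ large enough (uniformly in $\sigma$) so that $Af^{-1}\ge M_0+\delta_N$ on $\{|x|=R\}$ and so that $A(c_n+K)f^{-1}$ absorbs the $O(|x|^{-2})$ error at infinity, the exterior maximum principle for the operator $\Delta_f-K$ (whose zero-order coefficient $-K$ has the correct sign) yields $D(x)\le Af^{-1}(x)$ on $\{|x|\ge R\}$.

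With this sharp decay in hand, the equation $\Delta_f V = KD\,\nabla d_N(U^{\sigma})$ produces $|\Delta_f V|\le KD\le KAf^{-1}$ outside $B(0,R)$. Setting $|V|_{\varepsilon}:=\sqrt{|V|^2+\varepsilon^2}$, Kato's inequality gives $\Delta_f |V|_{\varepsilon}\ge -KAf^{-1}$; comparing $|V|_{\varepsilon}$ with a barrier $Bf^{-1}$ for $B$ large enough that $Bc_n\ge 2KA$ and $Bf^{-1}\ge M_0+\varepsilon$ on $\{|x|=R\}$, and then letting $\varepsilon\to 0$, is exactly the argument of Proposition \ref{propa-priori-weighted-diri} and yields $f|V|\le B$ on $\{|x|\ge R\}$. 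Together with the trivial interior bound, this delivers $\|fV\|_{C^0}\le M$ with $M$ independent of $\sigma\in[0,1]$, $\sigma$-uniformity following from that of $M_0$, $R$, $A$ and $B$.
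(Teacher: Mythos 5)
Your argument is correct and takes a genuinely different route from the paper's. The paper's proof upgrades the preliminary bound $|V|=\textit{O}(f^{-1/2})$ (obtained from $\varepsilon$-regularity via $d_N(U^{\sigma})=\textit{O}(f^{-1/2})$) to $|V|=\textit{O}(f^{-1})$ by Taylor-expanding $d_N(U_0^{\sigma}+V)$ around $U_0^{\sigma}$: the decay $d_N(U_0^{\sigma})=\textit{O}(f^{-1})$ of the caloric first approximation from Lemma \ref{lemma-first-app-Chen-Str} handles the constant term, the nonnegative cross-term $2K\left(d_{U_0^{\sigma}}d_N(V)\right)^2$ is discarded, and the remaining errors are $\textit{O}(|V|^2)$, giving $\Delta_f|V|^2\ge 2|\nabla V|^2-\textit{O}(f^{-1})|V|$ followed by the usual barrier argument. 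You instead write the scalar equation $(\Delta_f-K)D=\Hess(d_N)(U^{\sigma})(\nabla U^{\sigma},\nabla U^{\sigma})$ for $D:=d_N(U^{\sigma})$ and exploit the gradient decay $|\nabla U^{\sigma}|=\textit{O}(|x|^{-1})$ from Theorem \ref{eps-reg-Chen-Str} together with the favorable sign of the zeroth-order coefficient $-K$ to get $D=\textit{O}(f^{-1})$ directly, and only afterwards feed this into the $V$-equation. Your scalar equation for $D$ isolates the key decay as an explicit intermediate result and avoids the Taylor-expansion bookkeeping of the paper; the paper's route, in contrast, leans more heavily on the structural properties of the first approximation $U_0^{\sigma}$.

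One imprecision worth flagging: the justification ``$\Hess(d_N)$ is bounded on the tubular neighbourhood $T_{\delta_N}(N)$'' is literally true only when $\codim N=1$ (e.g.\ $N=\mathbb{S}^{m-1}$). For $\codim N\ge 2$ the Hessian of $d_N$ blows up like $1/d_N$ in the directions tangent to the normal-sphere fibers of the tube, so the equality $(\Delta_f-K)D=\textit{O}(|x|^{-2})$ is not a two-sided bound. However, the unboundedness is one-sided: after shrinking $\delta_N$ so that the tube-principal curvatures along $T_pN$ are uniformly controlled, $\Hess(d_N)$ is bounded \emph{below} on $T_{\delta_N}(N)\setminus N$ (the sphere-fiber eigenvalues are positive, the $T_pN$-eigenvalues are $\textit{O}(1)$). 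The comparison $D\le Af^{-1}$ via the operator $\Delta_f-K$ only needs the lower bound $(\Delta_f-K)D\ge -C|\nabla U^{\sigma}|^2\ge -C'f^{-1}$, so your argument survives intact; you should simply replace the boundedness claim by the lower bound. (You should also note, as a formality, that $d_N$ fails to be differentiable on $N$ itself, but a point where $D$ vanishes automatically satisfies $D-Af^{-1}<0$, so it cannot be a positive interior maximum of the comparison function.)
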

 \begin{rk}
 Because of Proposition \ref{a-priori-C0-Chen-Str}, it suffices to show this a priori bound outside a ball of radius independent of $\sigma\in[0,1]$.
 \end{rk}

\begin{proof}
Since $V$ is fixed point of the map $F_K^{\sigma}$ it follows that
\begin{eqnarray}
\Delta_fV&=&K\chi'(d_N^2(U_0^{\sigma}+V))\nabla \left(\frac{d_N^2}{2}\right)(U_0^{\sigma}+V)\label{eq-fixed-pt}\\
&=&K\bar{d}_N(U_0^{\sigma}+V)\chi'(d_N^2(U_0^{\sigma}+V))\nabla \bar{d}_N(U_0^{\sigma}+V)\\
&=&\textit{O}(f^{-1/2}),
\end{eqnarray}

where $\textit{O}(\cdot)$ is uniform in $\sigma\in[0,1]$ and where we used Theorem \ref{eps-reg-Chen-Str} in order to ensure that $\bar{d}_N(U_0^{\sigma}+V)=\textit{O}(f^{-1/2}).$ Therefore, by using $f^{-1/2}$ as a barrier, one gets a first a priori bound on the decay of $V$, namely there exists a positive constant $M$ independent of $\sigma\in[0,1]$ such that 
\begin{eqnarray}
\|f^{1/2}V\|_{C^0}\leq M.\label{first-a-priori-rough-weighted-est}
\end{eqnarray}
Now, we use the special structure of the nonlinearities of equation (\ref{eq-fixed-pt}) together with the previous a priori estimate (\ref{first-a-priori-rough-weighted-est}) and Lemma \ref{lemma-first-app-Chen-Str}.
Outside a ball of radius sufficiently large (but independent of $V$ and $\sigma\in[0,1]$), one has
\begin{eqnarray*}
\Delta_f|V|^2&=&2|\nabla V|^2+2K\bar{d}_N(U_0^{\sigma}+V)\left<\nabla \bar{d}_N(U_0^{\sigma}+V),V\right>,\\
|d_{U_0^{\sigma}+V}\bar{d}_N(V)-d_{U_0^{\sigma}}\bar{d}_N(V)|&\leq& C(N,\|U_0^{\sigma}\|_{L^{\infty}})|V|^2,\\
|\bar{d}_N(U_0^{\sigma}+V)-\bar{d}_N(U_0^{\sigma})-d_{U_0^{\sigma}}\bar{d}_N(V)|&\leq&C(N,\|U_0^{\sigma}\|_{L^{\infty}})|V|^2,
\end{eqnarray*}
which implies:
\begin{eqnarray*}
\Delta_f|V|^2&\geq&2|\nabla V|^2-C(N,\|U_0^{\sigma}\|_{L^{\infty}})K\bar{d}_N(U_0^{\sigma})|V|+2K\left(d_{U_0^{\sigma}}\bar{d}_N(V)\right)^2-C(N,\|U_0^{\sigma}\|_{L^{\infty}})K|V|^3\\
&\geq&2|\nabla V|^2-C(N,\|U_0^{\sigma}\|_{L^{\infty}})K\bar{d}_N(U_0^{\sigma})|V|-C(N,\|U_0^{\sigma}\|_{L^{\infty}})K|V|^3\\
&\geq&2|\nabla V|^2-\textit{O}(f^{-1})|V|,
\end{eqnarray*}

In particular, by the Kato inequality, 
\begin{eqnarray*}
\Delta_f|V|\geq -\textit{O}(f^{-1}),
\end{eqnarray*}
when $|V|$ does not vanish. 

In general, one can use the regularization of $|V|$ of the form $V_{\varepsilon}:=\sqrt{|V|^2+\varepsilon^2}$ where $\varepsilon$ is positive which satisfies the same differential inequality
\begin{eqnarray*}
\Delta_fV_{\varepsilon}\geq -\textit{O}(f^{-1}).
\end{eqnarray*}
Now, as $\Delta_ff^{-1}=-(1+\textit{o}(1))f^{-1}$, one can use $f^{-1}$ as a barrier function as follows
\begin{eqnarray*}
\Delta_f(V_{\varepsilon}-Af^{-1})>0,
\end{eqnarray*}
outside a sufficiently large ball $B(0,R)$ independent of $\sigma\in[0,1]$ and for any sufficiently large constant $A$.
In particular, as $V_{\varepsilon}$ is bounded independently of $\varepsilon\in(0,1]$ and of $\sigma\in[0,1]$ (and of $K$), one can choose a constant $A$ sufficiently large such that on the boundary $\partial B(0,R)$, $\sup_{\partial B(0,R)}V_{\varepsilon}-Af^{-1}<0$. By applying the maximum principle to the function $V_{\varepsilon}-Af^{-1}$, one gets
\begin{eqnarray*}
\sup_{\R^n\setminus B(0,R)}\left\{V_{\varepsilon}-Af^{-1}\right\}\leq \limsup_{+\infty}\left\{V_{\varepsilon}-Af^{-1}\right\}=\varepsilon.
\end{eqnarray*}
Since $A$ and $R$ can be chosen independently of $\varepsilon\in(0,1]$, one can pass to the limit in the previous inequality as $\varepsilon$ goes to $0$ to get the expected result.

\end{proof}

\subsection{Weighted $C^1$ estimate}\label{sec-a-priori-C1-weighted-Che-Str}
In this section, we prove a priori $C^1$ weighted estimates for Chen-Struwe expanding solutions that are uniform in the parameter $\sigma\in[0,1]$. As in Section \ref{sec-a-priori-weighted-est}, the bounds might depend on the parameter $K$.
\begin{prop}\label{a-priori-C1-weighted-Che-Str}
 There is a positive constant $M$ uniform in $\sigma\in[0,1]$ such that if $V\in X$ is a fixed point of $F_K^{\sigma}$ then $\|f^{3/2}\nabla V\|_{C^0}\leq M.$
 \end{prop}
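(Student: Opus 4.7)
The plan is to follow closely the proof of Proposition \ref{a-priori-c1-bd-sol-F}, with the weighted $C^0$ estimate of Proposition \ref{a-priori-C0-weighted-Che-Str} playing the role of the source-term bound $\|Q\|_X$. Writing $\Phi(p):=\chi'(d_N^2(p))\nabla(d_N^2/2)(p)$ and $U^\sigma:=U_0^\sigma+V$, the fixed-point equation reads $\Delta_f V=-K\Phi(U^\sigma)$. Taylor-expanding $\Phi$ around $U_0^\sigma$ and using $d_N(U_0^\sigma)=\textit{O}(f^{-1})$ from Lemma \ref{lemma-first-app-Chen-Str} together with $|V|\leq Mf^{-1}$ from Proposition \ref{a-priori-C0-weighted-Che-Str} yields $|\Delta_fV|\leq Cf^{-1}$. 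Interior parabolic Schauder estimates applied to the time-dependent extension $V(x,t):=V(x/\sqrt{t})$ then give the rough bound
\[
\sup_{x\in\R^n} f(x)\,\|V\|_{C^{2,\beta}(B(x,1))}\leq C,
\]
analogous to (\ref{rough-bds-F-sol}); in particular $|\nabla V|\leq Cf^{-1}$, a first weaker $C^1$ decay that will be used to close the Bochner argument.

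The next step is a Bochner computation. From the commutator identity $\Delta_f\nabla V=\nabla\Delta_fV-\tfrac12\nabla V$ and the equation for $V$, one has
\[
\Delta_f|\nabla V|^2 = 2|\nabla^2V|^2-|\nabla V|^2-2K\langle\nabla\Phi(U^\sigma)\cdot(\nabla U_0^\sigma+\nabla V),\nabla V\rangle.
\]
Since $\nabla\Phi(p)$ is uniformly bounded and, up to an $\textit{O}(d_N(p))$ correction, equals the orthogonal projection onto $(T_{\Pi_N(p)}N)^\perp$, the diagonal term $-2K\langle\nabla\Phi(U^\sigma)\cdot\nabla V,\nabla V\rangle$ is nonpositive up to $\textit{O}(f^{-1/2})|\nabla V|^2$. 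Combining this with the decay $|\nabla U_0^\sigma|=\textit{O}(f^{-1/2})$ from (\ref{est-cal-3}) (or the analogous bound from Lemma \ref{lemma-bar-app} when $n=3$), Young's inequality, and the rough bound $|\nabla V|=\textit{O}(f^{-1})$, one obtains
\[
\Delta_f|\nabla V|^2\geq 2|\nabla^2V|^2-|\nabla V|^2-Cf^{-1/2}|\nabla V|-Cf^{-1/2}|\nabla V|^2.
\]

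Multiplying by $f^2$ and using $\Delta_ff^2=2f^2+\textit{O}(f)$, $\nabla f^2=2f\nabla f$, together with the rough bound of Step 1 to handle the cross terms via Young, one arrives at the key differential inequality
\[
\Delta_f\bigl(f^2|\nabla V|^2\bigr)\geq \bigl(\tfrac12-\textit{O}(f^{-1/2})\bigr)f^2|\nabla V|^2-Cf^{-1},
\]
uniformly in $\sigma\in[0,1]$. The barrier argument from the proof of Proposition \ref{a-priori-c1-bd-sol-F} then applies essentially verbatim: testing against $k^{-1}\ln f+Af^{-1}$ for a large constant $A$ and an integer $k\geq 1$, one checks
\[
\Delta_f\bigl(f^2|\nabla V|^2-k^{-1}\ln f-Af^{-1}\bigr)\geq \tfrac12\bigl(f^2|\nabla V|^2-k^{-1}\ln f-Af^{-1}\bigr)-Ck^{-1}.
\]
Since $f^2|\nabla V|^2$ is bounded by Step 1, this expression tends to $-\infty$ at infinity and therefore attains its maximum; the maximum principle forces it to be at most $2Ck^{-1}$, and letting $k\to+\infty$ gives $f^3|\nabla V|^2\leq A$, which is the bound $\|f^{3/2}\nabla V\|_{C^0}\leq M$ outside a large ball, while Corollary \ref{para-reg-coro-Che-Str} handles the interior. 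The main obstacle will be the careful bookkeeping in Step 2, ensuring that the cross term $-2K\langle\nabla\Phi(U^\sigma)\cdot\nabla V,\nabla V\rangle$ does not degrade the $-|\nabla V|^2$ contribution from the commutator once multiplied by $f^2$; this forces one to exploit the approximate projection structure of $\nabla\Phi$ together with the a priori decay of $d_N(U^\sigma)$ provided by the $\varepsilon$-regularity theorem of Section \ref{sec-eps-reg-theo}.
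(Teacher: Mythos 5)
Your Step 2 contains a genuine gap that invalidates the argument. You estimate the term coming from $-2K\langle\nabla\Phi(U^\sigma)\cdot\nabla U_0^\sigma,\nabla V\rangle$ by the crude bound $|\nabla\Phi(U^\sigma)|\leq C$ and $|\nabla U_0^\sigma|=\textit{O}(f^{-1/2})$, which yields a linear term $-Cf^{-1/2}|\nabla V|$ in the differential inequality for $|\nabla V|^2$. That coefficient is far too large. After multiplying by $f^2$, this term becomes $-Cf^{3/2}|\nabla V|$, which, even under your claimed rough bound $|\nabla V|=\textit{O}(f^{-1})$, is only $\textit{O}(f^{1/2})$; applying Young it becomes $\textit{O}(f)$. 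Either way the forcing term in your claimed inequality $\Delta_f(f^2|\nabla V|^2)\geq\frac12 f^2|\nabla V|^2-Cf^{-1}$ is actually growing, not decaying like $f^{-1}$, so the subsequent barrier argument breaks down: the equation $\Delta_f\tilde g-\frac12\tilde g=-Cf^{\gamma}$ has no positive supersolution when $\gamma>0$, and your inequality cannot rule out $f^2|\nabla V|^2$ growing like $f^{1/2}$ (i.e. $|\nabla V|=\textit{O}(f^{-3/4})$), which falls short of the required $\textit{O}(f^{-3/2})$.

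What the paper does differently is to exploit an essential cancellation that your ``approximate projection'' observation gestures at but does not actually extract. Writing out $\nabla_i\Phi(U^\sigma)_j=\nabla_i(d_N(U^\sigma))\nabla_j d_N(U^\sigma)+d_N(U^\sigma)\nabla_i(\nabla_j d_N(U^\sigma))$ and Taylor expanding the first factor around $U_0^\sigma$, the part \emph{not} proportional to $\nabla V$ is $\nabla_i(d_N(U_0^\sigma))+(\nabla\nabla d_N(U_0^\sigma))\cdot V$ plus controllable errors. The key fact, which is (\ref{est-cal-5}) of Lemma \ref{lemma-first-app-Chen-Str} for $n\geq 4$ (and trivial for the barycentric ansatz (\ref{sec-bar-app}) when $n=3$, since there $d_N(U_0^b)\equiv 0$ outside a ball), is that $\nabla(d_N(U_0^\sigma))=\textit{O}(f^{-3/2})$, an order of magnitude better than the naive product bound $|\nabla d_N(U_0^\sigma)||\nabla U_0^\sigma|=\textit{O}(f^{-1/2})$; this reflects the near-orthogonality of $\nabla d_N(U_0^\sigma)$ (normal to $N$) and $\nabla u_0^\sigma$ (tangent to $N$). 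With this refinement the linear term in the Bochner inequality is $\textit{O}(f^{-3/2})|\nabla V|$, after which the paper closes via a maximum principle applied to $f^{1/2}|\nabla V|_\varepsilon-Af^{-1}$ (a Kato-regularized quantity, rather than your $f^2|\nabla V|^2$). As a secondary point, your Step 1 claim that Schauder estimates yield $|\nabla V|\leq Cf^{-1}$ does not quite follow, since the forcing $\Phi(U^\sigma)$ only has $C^1$ decay $\textit{O}(f^{-1/2})$, so its $C^\alpha$ norm on unit balls decays like $f^{-1+\alpha/2}$; the paper instead uses $|\nabla V|=\textit{O}(f^{-1/2})$ from the $\varepsilon$-regularity Theorem \ref{eps-reg-Chen-Str}, which suffices once one has the sharper $\textit{O}(f^{-3/2})$ coefficient described above.
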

 \begin{rk}
 Because of Proposition \ref{a-priori-C0-Chen-Str}, it suffices to show this a priori bound outside a ball of radius independent of $\sigma\in[0,1]$.
 \end{rk}

\begin{proof}
 We first establish the evolution equation satisfied by the gradient $\nabla V$. Since $U:=U_0^{\sigma}+V$ is an expanding solution of the homogeneous Chen-Struwe flow and because of the previous remark, the gradient $\nabla V$ satisfies the following equation outside a sufficiently large ball independent of $\sigma$
 
 \begin{eqnarray*}
\Delta_f\nabla V&=&-\frac{\nabla V}{2}+K\nabla\left(\nabla\left( \frac{d^2_{N}}{2}\right)(U)\right).
\end{eqnarray*}
More precisely, in coordinates, this gives:

\begin{eqnarray*}
\Delta_f\nabla_i V_j&=&-\frac{\nabla_i V_j}{2}+K\nabla_i\left(\nabla_j\left( \frac{d^2_{N}}{2}\right)(U)\right)\\
&=&-\frac{\nabla_i V_j}{2}+K\nabla_i\left( \bar{d}_N(U)\nabla_j \bar{d}_N(U)\right)\\
&=&-\frac{\nabla_i V_j}{2}+K\left(\nabla_i (\bar{d}_N(U))\cdot(\nabla_j \bar{d}_N)(U)+\bar{d}_N(U)\cdot\nabla_i(\nabla_j \bar{d}_N(U))\right).
\end{eqnarray*}

By Taylor expansion of order $2$ together with Proposition \ref{a-priori-C0-weighted-Che-Str}, Theorem \ref{eps-reg-Chen-Str} and Lemma \ref{lemma-first-app-Chen-Str} we have
 \begin{eqnarray*}
|\nabla(\bar{d}_N(U)-\bar{d}_N(U_0^{\sigma})-<\nabla \bar{d}_N(U_0^{\sigma}),V>)|&\leq& C(N)(|\nabla V||V|+|\nabla U_0^{\sigma}||V|^2),\\
&\leq& \textit{O}(f^{-1})|\nabla V|+\textit{O}(f^{-3/2}),
\end{eqnarray*}
and,

\begin{eqnarray*}
(\nabla (\nabla \bar{d}_N(U_0^{\sigma})))(V)&=&\textit{O}(f^{-3/2}),\\
\bar{d}_N(U)\cdot\nabla_i(\nabla_j \bar{d}_N(U))&=&\textit{O}(f^{-3/2}),\\
\nabla \bar{d}_N(U)&=&\nabla \bar{d}_N(U_0^{\sigma})+\textit{O}(V)=\nabla \bar{d}_N(U_0^{\sigma})+\textit{O}(f^{-1}).
\end{eqnarray*}

Now, we can end the argument by discarding the nonnegative terms that are quadratic in the gradient $\nabla V$ as follows
\begin{eqnarray*}
\Delta_f|\nabla V|^2&\geq&2|\nabla^2 V|^2-(1+\textit{O}(f^{-1}))|\nabla V|^2-\textit{O}(f^{-3/2})|\nabla V|\\
&&+2K\nabla (\bar{d}_N(U_0^{\sigma}))<\nabla \bar{d}_N(U), \nabla V>+2K<\nabla \bar{d}_N(U_0^{\sigma})(\nabla V),\nabla \bar{d}_N(U)(\nabla V)>\\
&\geq&2|\nabla^2 V|^2-(1+\textit{O}(f^{-1}))|\nabla V|^2-\textit{O}(f^{-3/2})|\nabla V|\\
&&+2K<\nabla \bar{d}_N(U_0^{\sigma})(\nabla V),\nabla \bar{d}_N(U_0^{\sigma})(\nabla V)>\\
&\geq&2|\nabla^2 V|^2-(1+\textit{O}(f^{-1}))|\nabla V|^2-\textit{O}(f^{-3/2})|\nabla V|,\\
&\geq&2|\nabla^2 V|^2-|\nabla V|^2-\textit{O}(f^{-3/2})|\nabla V|,\\
\end{eqnarray*}
where we used in the last line the fact that $\nabla V=\textit{O}(f^{-1/2})$ a priori, thanks to Theorem \ref{eps-reg-Chen-Str}.

By considering the function $f^{1/2}|\nabla V|_{\varepsilon}-Af^{-1}$ where $|\nabla V|_{\varepsilon}$ denotes a regularization of the norm of the gradient $|\nabla V|$ and where $A$ a positive constant large enough depending eventually on $K$ but independent of $\sigma\in[0,1]$ and $\varepsilon\in(0,1]$, one can prove the expected a priori estimate on $\nabla V$ with the help of the maximum principle.

\end{proof}

\subsection{$L^2_{loc}$ convergence at $t=0$.}\label{section-$L^2_{loc}$ convergence-Che-Str}
 In this section, we investigate the $L^2_{loc}$ continuity at $t=0$ of the expanding solution of the homogeneous Chen-Struwe equation we produced in the previous sections. Since $U(t)=k_t\ast u_0+V(t)$ with $\nabla^iV(x,1)=\textit{O}((1+|x|)^{-2-i})$ with $i=0,1$, i.e. $V\in X$. It suffices to prove the $L^2$ convergence on a ball $B(0,R)$ centered at the origin with radius $R$. We claim that: $$\lim_{t\rightarrow 0}\|\nabla V\|_{L^2(B(0,R))}(t)=0.$$
 Indeed, since $V\in X$, $\nabla V$ decays at least quadratically, i.e. $$\nabla V(x,t)=\textit{O}\left(\frac{1}{\sqrt{t}}\frac{1}{\left(\frac{|x|}{\sqrt{t}}+1\right)^{2}}\right).$$ Therefore,
 \begin{eqnarray*}
\|\nabla V\|^2_{L^2(B(0,R))}(t)\leq C(n,u_0)\int_{B(0,R)}\frac{t}{(|x|+\sqrt{t})^4}dx=C(n,u_0) \int_0^R\frac{t}{(r+\sqrt{t})^4}r^{n-1}dr,
\end{eqnarray*}
for some positive constant $C(n,u_0)$.

Now, if $n\geq5$, 
\begin{eqnarray*}
\|\nabla V\|^2_{L^2(B(0,R))}(t)\leq C(n,u_0,R)t\rightarrow 0,\quad \mbox{as $t\rightarrow 0^+$.}
\end{eqnarray*}
 If $n=4$, \begin{eqnarray*}
\|\nabla V\|^2_{L^2(B(0,R))}(t)\leq C(n,u_0)\ln\left(\frac{R}{\sqrt{t}}+1\right)t\rightarrow 0,\quad \mbox{as $t\rightarrow 0^+$.}
\end{eqnarray*}
If $n=3$,
\begin{eqnarray*}
\|\nabla V\|^2_{L^2(B(0,R))}(t)\leq C(n,u_0)\sqrt{t}\rightarrow 0,\quad \mbox{as $t\rightarrow 0^+$.}
\end{eqnarray*}

\subsection{Proof of Theorem \ref{Chen-Str-app}}
In this section, we give the proof of Theorem \ref{Chen-Str-app}.
\begin{proof}[Proof of Theorem \ref{Chen-Str-app}]
Let $K>0$ and let $u_0:\R^n\rightarrow N^{m-1}\subset\R^m$ be a $0$-homogeneous map $u_0$ as in the statement of Theorem \ref{Chen-Str-app}.

Thanks to Propositions \ref{propa-priori-weighted-diri} and \ref{a-priori-c1-bd-sol-F}, the map $F_K:X\times[0,1]\rightarrow X$ is a well-defined compact continuous map and Proposition \ref{map-cont-cpct} ensures that $F_K$ is a compact and continuous map.

Moreover, the Leray-Schauder degree of $I-F_K^{\sigma}:B_{X}(0,\varepsilon)\rightarrow B_{X}(0,\varepsilon)$ is $1$ when $\sigma$ is close to $1$, for some positive $\varepsilon$ by Lemma \ref{est-Q-lemma} combined with Section \ref{section-CS-well-posed}.

Finally, there is a positive constant $M$ (uniform in $\sigma\in[0,1]$) such that if $V\in X$ is such that $F_K^{\sigma}(V)=V$ then $\|V\|_{X}\leq M$ by the combination of Propositions \ref{a-priori-C0-Chen-Str}, \ref{a-priori-C0-weighted-Che-Str} and \ref{a-priori-C1-weighted-Che-Str} proved in Section \ref{sec-a-priori-chen-struwe-expanders} with the help of Section \ref{sec-varespilon-reg-thm}.

As a consequence of the Leray-Schauder fixed point Theorem, for each positive $K$, the map $F_K^0:X\rightarrow X$ has a fixed point $V_K\in X$, i.e. the map $U_K:=U_0+V_K$ is a smooth solution to (\ref{Chen-Str-equ}) by Section \ref{Section-Fixed-point-formulation}.

Finally, Section \ref{section-$L^2_{loc}$ convergence-Che-Str} ensures that the time-dependent expanding solution $u_K(t)$ converges strongly to $u_0$ as $t$ goes to $0$ in $H^1_{loc}(\R^n)$.
\end{proof}
\section{Proof of Theorem \ref{main-theo}}\label{sec-lip-end-proof}
In this section, we give the proof of Theorem \ref{main-theo}:

\begin{proof}[Proof of Theorem \ref{main-theo}]
Let $(u_0^{\varepsilon})_{\varepsilon\in(0,1)}$  be a sequence of $0$-homogeneous maps $u_0^{\varepsilon}:\mathbb{R}^n\rightarrow N\subset\R^m$ in $C^3_{loc}(\R^n\setminus\{0\})$ converging to $u_0$ in the $C^0$ topology as $\varepsilon$ goes to $0$,  such that 
\begin{eqnarray}
\limsup_{\varepsilon\rightarrow 0}\Lip(u_0^{\varepsilon})\leq \Lip(u_0).\label{app-lip-maps}
\end{eqnarray}

Let $K>0$ and let $(U_K^{\varepsilon})_{\varepsilon\in(0,1)}$ be a sequence of smooth Chen-Struwe expanding solutions with fixed parameter $K$ coming out of $u_0^{\varepsilon}$ given by Theorem \ref{Chen-Str-app}.
By Theorem \ref{Chen-Str-app}, we know there exists a radius $R=R(\|\nabla u_0^{\varepsilon}\|_{L^2_{loc}},n,m)>0$ and a constant $C=C(\|\nabla u_0^{\varepsilon}\|_{L^2_{loc}},n,m)>0$ such that,
\begin{eqnarray}
&&|e_K(U_K^{\varepsilon})|(x)\leq \frac{C}{|x|^2},\quad |x|\geq R,\label{equ:1}\\
&&\|e_K(u_K^{\varepsilon})(t)\|_{L^1(B(x_0,1))}\leq C\left(n,m,\|\nabla u_0^{\varepsilon}\|_{L^2_{loc}(\R^n)},t\right)\|\nabla u_0^{\varepsilon}\|^2_{L^2(B(x_0,1))},\quad\forall x_0\in\R^n,\label{equ:2}\\
&&\|\partial_tu_K^{\varepsilon}\|_{L^2((0,t),L^2_{loc}(\R^n))}\leq C(n,m,t)\|\nabla u_0^{\varepsilon}\|_{L^2_{loc}(\R^n)},\label{equ:3}
\end{eqnarray}
where $\lim_{t\rightarrow 0}C\left(n,m,\|\nabla u_0^{\varepsilon}\|_{L^2_{loc}(\R^n)},t\right)=\lim_{t\rightarrow}C(n,m,t)=1$.
According to (\ref{app-lip-maps}), there exist constants $C$ and $R$ uniform in $\varepsilon$ such that the previous inequalities hold.

Fix $\varepsilon\in(0,1)$. As in \cite{Chen-Struwe}, there exists a subsequence (still denoted by $(u_{K}^{\varepsilon})_{K>0}$) converging weakly to a map $u^{\varepsilon}:\R^n\times\R_+\rightarrow \R^m$ as $K$ tends to $+\infty$ such that

\begin{eqnarray*}
&&u^{\varepsilon}(\lambda x,\lambda^2t)=u^{\varepsilon}(x,t), \quad \forall \lambda >0,\quad \text{a.e. $(x,t)\in \R^n\times \R_+$},\\
&&\nabla u_K^{\varepsilon}\rightharpoonup\nabla u^{\varepsilon},\quad\text{weakly$^*$ in $L^{\infty}(\R_+,L^2_{loc}(\R^n))$},\\
&&\partial_tu_K^{\varepsilon}\rightharpoonup \partial_tu^{\varepsilon},\quad\text{weakly in $L^{2}_{loc}(\R_+,L^2_{loc}(\R^n))$},\\
&&u_K^{\varepsilon}\rightarrow u^{\varepsilon},\quad\text{ in $L^2_{loc}(\R^n)$},\\
&&u_K^{\varepsilon}\rightarrow u^{\varepsilon},\quad\text{ in $C^{0,\beta}_{loc}(\R^n\setminus B(0,R))$, for all $\beta\in(0,1)$.}
\end{eqnarray*}
The last point is due to Arzela-Ascoli Theorem together with the estimate (\ref{equ:1}). Moreover, thanks to (\ref{equ:2}), $u^{\varepsilon}\in N$ a.e.. 

This implies in particular that $U^{\varepsilon}$ is a Lipschitz function outside $B(0,R)$. To sum it up, we have obtained that 
\begin{eqnarray}
&&|\Lip(U^{\varepsilon})|(x)\leq \frac{C}{|x|},\quad |x|\geq R,\label{equ:1:bis}\\
&&\|\nabla u^{\varepsilon}(t)\|_{L^2(B(x_0,1))}\leq C\left(n,m,\|\nabla u_0^{\varepsilon}\|_{L^2_{loc}(\R^n)},t\right)\|\nabla u_0^{\varepsilon}\|_{L^2(B(x_0,1))},\quad\forall x_0\in\R^n,\label{equ:2:bis}\\
&&\|\partial_tu^{\varepsilon}\|_{L^2((0,t),L^2_{loc}(\R^n))}\leq C(n,m,t)\|\nabla u_0^{\varepsilon}\|_{L^2_{loc}(\R^n)},\label{equ:3:bis}
\end{eqnarray}
In particular, if one shows that $u^{\varepsilon}(\cdot,t):=U^{\varepsilon}(\cdot/\sqrt{t})$ converges weakly to $u_0^{\varepsilon}$ then, $$\liminf_{t\rightarrow 0}\|\nabla u^{\varepsilon}(t)\|_{L^2(B(x_0,1))}\geq \|\nabla u_0^{\varepsilon}\|_{L^2(B(x_0,1))}.$$ By combining this fact with (\ref{equ:3}), one ends up by proving that $u^{\varepsilon}(t)$ converges to $u_0^{\varepsilon}$ in $H^1_{loc}(\R^n)$ (in the strong sense).

\begin{claim}
$u^{\varepsilon}(t)\rightharpoonup u_0$ as $t\rightarrow 0$.\\
\end{claim}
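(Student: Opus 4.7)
My plan is to deduce the weak convergence $u^\varepsilon(t)\rightharpoonup u_0^\varepsilon$ by transferring the analogous (strong) convergence at fixed $K$ across the limit $K\to\infty$. The crucial input is that estimate (\ref{equ:3}) for $\|\partial_tu_K^{\varepsilon}\|_{L^2((0,t),L^2_{loc})}$, produced by Theorem \ref{Chen-Str-app}, is \emph{uniform in $K$}; this uniformity is exactly what allows a rate of convergence at $t=0$ to be preserved in the limit and cures the fact that, a priori, one only controls $u^\varepsilon$ after passing to $K=\infty$.

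Concretely, fix a test function $\phi\in C_c^\infty(\R^n)$. For each $K>0$, Theorem \ref{Chen-Str-app} asserts that $u_K^\varepsilon(t)\to u_0^\varepsilon$ strongly in $H^1_{loc}$ as $t\to 0^+$, so the fundamental theorem of calculus applied to the smooth function $t\mapsto \int_{\R^n}u_K^\varepsilon(t)\phi\,dx$, together with Cauchy-Schwarz and (\ref{equ:3}), yields
\begin{eqnarray*}
\left|\int_{\R^n}\big(u_K^\varepsilon(t)-u_0^\varepsilon\big)\phi\,dx\right|\leq \sqrt{t}\,\|\phi\|_{L^2}\,\|\partial_s u_K^\varepsilon\|_{L^2((0,t),L^2(\supp\phi))}\leq \sqrt{t}\,C(n,m,t)\,\|\phi\|_{L^2}\,\|\nabla u_0^\varepsilon\|_{L^2_{loc}},
\end{eqnarray*}
with a constant independent of $K$. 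The strong $L^2_{loc}$ convergence $u_K^\varepsilon\to u^\varepsilon$ listed in the excerpt yields, along a subsequence, $u_K^\varepsilon(\cdot,t)\to u^\varepsilon(\cdot,t)$ in $L^2_{loc}(\R^n)$ for almost every $t>0$. Taking $K\to\infty$ at such times, and then extending to every $t>0$ via the continuity of $t\mapsto\int_{\R^n}u^\varepsilon(t)\phi\,dx$ (a consequence of $\partial_tu^\varepsilon\in L^2_{loc}(\R_+\times\R^n)$ from (\ref{equ:3:bis})), the same inequality remains valid with $u^\varepsilon$ in place of $u_K^\varepsilon$. Letting $t\to 0^+$ and recalling $\lim_{t\to 0}C(n,m,t)=1$, the right-hand side tends to zero, so $\int u^\varepsilon(t)\phi\,dx\to \int u_0^\varepsilon\phi\,dx$. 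As $\phi\in C_c^\infty(\R^n)$ is arbitrary and $u^\varepsilon$ is uniformly bounded (values in the compact embedded $N$), a density argument upgrades this to the desired weak convergence in $L^2_{loc}(\R^n)$.

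The only delicate point is the legitimacy of interchanging the limits $K\to\infty$ and $t\to 0^+$. This is exactly what the uniform-in-$K$ rate from (\ref{equ:3}) provides, combined with the temporal continuity inherited from (\ref{equ:3:bis}); no sharp pointwise decay of $u^\varepsilon$ near the origin or at infinity is needed at this stage, which is fortunate since the limit $u^\varepsilon$ is only controlled in an integral sense on compact sets.
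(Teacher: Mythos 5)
Your argument is essentially the same as the paper's: apply the fundamental theorem of calculus to $t\mapsto\int u_K^\varepsilon(t)\phi$, bound via Cauchy--Schwarz and the $K$-uniform estimate (\ref{equ:3}), pass $K\to\infty$, then let $t\to 0^+$. The only stylistic difference is your caution about the a.e.-in-$t$ validity of $u_K^\varepsilon(\cdot,t)\to u^\varepsilon(\cdot,t)$ in $L^2_{loc}$; this is actually automatic for all $t>0$ since both families are expanders, so $u_K^\varepsilon(\cdot,t)=U_K^\varepsilon(\cdot/\sqrt t)$ and the $L^2_{loc}$ convergence at $t=1$ transfers to every fixed positive time by scaling.
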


To prove this statement, let $\psi_{x_0}:\R^n\rightarrow\R^m$ be a smooth map with compact support in $B(x_0,1)$ and let $0<s<t$. Then, for $K>0$:

\begin{eqnarray*}
\left|\int_{\R^n}<u^{\varepsilon}_K(t)-u^{\varepsilon}_K(s),\psi_{x_0}>dx\right|&=&\left|\int_s^t\int_{\R^n}<\partial_{\tau}u^{\varepsilon}_K,\psi_{x_0}>dxd\tau\right|\\
&\leq&\int_s^t\int_{B(x_0,1)}|\partial_{\tau}u^{\varepsilon}_K||\psi_{x_0}|dxd\tau\\
&\leq&\left(\int_s^t\int_{B(x_0,1)}|\partial_{\tau}u^{\varepsilon}_K|^2dx\right)^{1/2}\left(\int_s^t\int_{B(x_0,1)}\psi_{x_0}^2dxd\tau\right)^{1/2}\\
&\leq&C(n,m,t)\|\nabla u^{\varepsilon}_0\|_{L^2_{loc}(\R^n)}\sqrt{t-s}\|\psi_{x_0}\|_{L^2(\R^n)},
\end{eqnarray*}
where we used the a priori uniform bound (\ref{equ:3:bis}) in the last line. Now, by letting $s$ go to $0$, one gets:

\begin{eqnarray*}
\left|\int_{\R^n}<u^{\varepsilon}_K(t)-u^{\varepsilon}_0,\psi_{x_0}>dx\right|&\leq&C(n,m,t)\|\nabla u^{\varepsilon}_0\|_{L^2_{loc}(\R^n)}\sqrt{t}\|\psi_{x_0}\|_{L^2(\R^n)},
\end{eqnarray*}
as $u^{\varepsilon}_K(t)$ converges to $u_0$ weakly as $t$ goes to $0$. By letting $K$ go to $+\infty$, one has:
\begin{eqnarray*}
\left|\int_{\R^n}<u^{\varepsilon}(t)-u^{\varepsilon}_0,\psi_{x_0}>dx\right|&\leq&C(n,m,t)\|\nabla u^{\varepsilon}_0\|_{L^2_{loc}(\R^n)}\sqrt{t}\|\psi_{x_0}\|_{L^2(\R^n)},
\end{eqnarray*}
which proves the expected convergence as $t$ goes to $0$. This ends the proof of the claim.

The fact that $U^{\varepsilon}$ is regular off a singular closed (hence compact by (\ref{equ:1:bis})) set of finite $(n-2)$ Hausdorff dimensional measure follows from [Sect. III, \cite{Chen-Struwe}] and \cite{Cheng-Xiaoxi}. Finally, the fact that $u^{\varepsilon}$ solves the harmonic map flow follows from [Sect. III, \cite{Chen-Struwe}] as well.

The same strategy can be applied now to the sequence of expanding solutions $(u^{\varepsilon})_{\varepsilon\in(0,1)}$ of the Harmonic map flow by using (\ref{equ:1:bis}), (\ref{equ:2:bis}), (\ref{equ:3:bis}) together with (\ref{app-lip-maps}).  

The remaining statement to prove concerns the convergence rate at infinity (\ref{conv-rate}). By using the evolution equation (\ref{eq-HMP-Stat}), it is sufficient to prove the following claim:
\begin{claim}\label{claim-der}
Let $U$ be a solution of (\ref{eq-HMP-Stat}). Assume $U$ is smooth on $B(x_0,2r)$ for some positive radius $r$. Then,
\begin{eqnarray*}
\sup_{B(x_0,r/2)}|\nabla^2U|^2\leq C\left(1+\frac{1}{r^2}+\frac{|x_0|}{r}+\sup_{B(x_0,r)}|\nabla U|+\sup_{B(x_0,r)}|\nabla U|^2\right)\sup_{B(x_0,r)}|\nabla U|^2,
\end{eqnarray*}
where $C$ is a positive constant independent of $U$, $r$ and $x_0$.
\end{claim}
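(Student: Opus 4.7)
The goal is a Shi-type a priori estimate for the elliptic expander equation $\Delta_f U+A(U)(\nabla U,\nabla U)=0$, and my plan is to combine two Bochner formulas (for $|\nabla U|^2$ and for $|\nabla^2 U|^2$) with a maximum principle argument applied to a cutoff-weighted auxiliary function of Shi type. The $1/r^2$ and $|x_0|/r$ contributions in the estimate arise precisely from the weighted laplacian $\Delta_f=\Delta+\tfrac{x}{2}\cdot\nabla$ applied to a standard cutoff $\phi$ supported in $B(x_0,r)$ with $\phi\equiv 1$ on $B(x_0,r/2)$, $|\nabla\phi|\leq C/r$ and $|\nabla^2\phi|\leq C/r^2$: a direct computation gives
\[
|\Delta_f \phi^2|+\frac{|\nabla\phi^2|^2}{\phi^2}\leq C\Bigl(1+\frac{1}{r^2}+\frac{|x_0|}{r}\Bigr)\quad\text{on } B(x_0,r),
\]
the $|x_0|/r$ coming from $\nabla f\cdot\nabla\phi$.

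First I would derive the two Bochner inequalities. Since $\Hess f=\tfrac12 \Id$, one has the commutator identity $\Delta_f(\nabla U)=\nabla(\Delta_f U)-\tfrac12\nabla U$, so differentiating the equation once and applying Young's inequality to absorb the cross term $|\nabla U|^2|\nabla^2 U|$ yields
\[
\Delta_f|\nabla U|^2\geq |\nabla^2 U|^2-|\nabla U|^2-C|\nabla U|^4,
\]
with $C=C(N)$ depending only on $N$. Differentiating a second time and using $\Delta_f(\nabla^2 U)=\nabla^2(\Delta_f U)-\nabla^2 U$, a routine computation, followed by Young's inequality to absorb $|\nabla U||\nabla^3 U||\nabla^2 U|$ into $|\nabla^3 U|^2$, gives the lower bound
\[
\Delta_f|\nabla^2 U|^2\geq -C(1+|\nabla U|^2)|\nabla^2 U|^2-C|\nabla^2 U|^3-C|\nabla U|^4|\nabla^2 U|.
\]

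Writing $S:=\sup_{B(x_0,r)}|\nabla U|$, I would then consider an auxiliary function of Shi type, for instance
\[
G:=\frac{\phi^2|\nabla^2 U|^2}{2S^2+1-|\nabla U|^2},
\]
where the $+1$ avoids degeneration when $S=0$ and the denominator lies in $[1,2S^2+1]$ on $B(x_0,r)$. If $G$ attains its maximum on $\partial B(x_0,r)$ then $G\equiv 0$ there and there is nothing to prove; otherwise at an interior maximum $x^\ast$ I would use $\nabla G(x^\ast)=0$ to eliminate $\nabla|\nabla^2 U|^2$ from $\Delta_f G(x^\ast)\leq 0$, substitute the two Bochner inequalities together with the cutoff bound above, and derive
\[
G(x^\ast)\leq C\Bigl(1+\frac{1}{r^2}+\frac{|x_0|}{r}+S+S^2\Bigr).
\]
Since $\phi\equiv 1$ and the denominator is $\leq 2S^2+1$ on $B(x_0,r/2)$, this translates into the claimed bound on $\sup_{B(x_0,r/2)}|\nabla^2 U|^2$ after adjusting constants.

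The principal obstacle is the super-quadratic term $C|\nabla^2 U|^3$ in the second Bochner inequality: it cannot be absorbed into $|\nabla^3 U|^2$ (which has already been discarded) nor into the other lower-order terms, and a naive choice $G=\phi^2|\nabla^2 U|^2$ would not close the argument. The Shi denominator $(2S^2+1-|\nabla U|^2)$ is precisely the device that resolves this, since at the extremum $x^\ast$ it converts one copy of $|\nabla^2 U|$ into a quantity of size $(1+S)$, which is exactly what produces the linear-in-$S$ contribution in the final factor. Apart from this, the remaining bookkeeping is standard, and an alternative route via rescaling $y=(x-x_0)/r$ combined with interior Schauder or $L^p$ estimates for the rescaled equation $\Delta\tilde U+\tfrac{r(x_0+ry)}{2}\cdot\nabla\tilde U+A(\tilde U)(\nabla\tilde U,\nabla\tilde U)=0$ on $B(0,2)$ would reach the same conclusion but with less transparent constants.
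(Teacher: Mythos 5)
Your overall strategy — the two Bochner formulas for $|\nabla U|^2$ and $|\nabla^2 U|^2$, a Shi-type auxiliary function, and a cutoff maximum-principle argument in which the $|x_0|/r$ term is correctly traced to $\nabla f\cdot\nabla\phi$ — is exactly the paper's. The paper, however, works with the \emph{product} auxiliary function $F:=(|\nabla U|^2+a^2)|\nabla^2 U|^2$ with $a^2\sim S^2:=\sup_{B(x_0,r)}|\nabla U|^2$, while you choose the quotient
\[
G=\frac{\phi^2|\nabla^2U|^2}{2S^2+1-|\nabla U|^2}.
\]

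The $+1$ you insert in the denominator is a genuine gap. With it, the denominator is confined to $[1,\,2S^2+1]$, so the final step ``since the denominator is $\leq 2S^2+1$ on $B(x_0,r/2)$, this translates into the claimed bound'' actually yields
\[
\sup_{B(x_0,r/2)}|\nabla^2 U|^2\ \leq\ (2S^2+1)\sup G\ \leq\ C\,(2S^2+1)\Bigl(1+\tfrac{1}{r^2}+\tfrac{|x_0|}{r}+S+S^2\Bigr),
\]
which is \emph{not} the statement: the claim has an overall factor $\sup|\nabla U|^2=S^2$, so for small $S$ it asserts $|\nabla^2 U|^2\lesssim S^2$, whereas your bound only gives $|\nabla^2 U|^2\lesssim 1$. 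This loss is not cosmetic. In the proof of the convergence rate (\ref{conv-rate}) the claim is applied with $r=|x_0|/2$ and $S\sim 1/|x_0|$ (from Theorem \ref{eps-reg-Chen-Str}), and one needs the conclusion $|\nabla^2 U|\lesssim 1/|x_0|$ — precisely the $S^2$ gain — to get the linear decay of $\Delta U$ and hence the quadratic decay of $\partial_r U$. With the $+1$ your estimate degenerates to $O(1)$ and the application fails.

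The fix is simple. If $S=0$ on $B(x_0,r)$, then $U$ is constant there and the claim is trivial, so you may assume $S>0$ and drop the $+1$: set $v:=2S^2-|\nabla U|^2\in[S^2,2S^2]$. Then at the interior maximum of $\phi^2|\nabla^2 U|^2/v$ the same computation gives $|\nabla^2 U|^2\lesssim S^2\bigl(1+\tfrac{1}{r^2}+\tfrac{|x_0|}{r}+S+S^2\bigr)$ there, and multiplying back by $v\leq 2S^2$ preserves the $S^2$ factor. The paper achieves the same by instead using the product $(|\nabla U|^2+a^2)|\nabla^2 U|^2$: after the maximum principle it divides by $|\nabla U|^2+a^2\geq a^2\sim S^2$, which likewise produces the crucial $S^2$ scaling. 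Either route closes the argument; your version with the $+1$ regularization does not.
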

\begin{proof}[Proof of Claim \ref{claim-der}]
We proceed in the spirit of Shi's estimates for the Ricci flow \cite{Shi-Def}. We compute the evolution equation of the first two derivatives of $U$
\begin{eqnarray*}
\Delta_f\nabla U&=&-\frac{\nabla U}{2}+A(U)\ast\nabla^2U\ast \nabla U+D_UA\ast\nabla U^{*3},\\
\Delta_f\nabla^2U&=&-\nabla^2U+\nabla^3U\ast\nabla U+\nabla^2U^{*2}+\nabla^2U\ast\nabla U^{*2}+\nabla U^{*4}.
\end{eqnarray*}
In particular, by using Young's inequality
\begin{eqnarray*}
\Delta_f|\nabla U|^2&\geq&2|\nabla^2 U|^2-|\nabla U|^2-|\nabla^2U||\nabla U|^2-c|\nabla U|^4\\
&\geq&|\nabla^2 U|^2-c(1+|\nabla U|^2)|\nabla U|^2,\\
\end{eqnarray*}
where $c$ denotes a positive constant depending on the geometry of $N$ only that can vary from line to line. Similarly, one has:
\begin{eqnarray*}
\Delta_f|\nabla^2 U|^2&\geq&|\nabla^3 U|^2-c(1+|\nabla U|^2)|\nabla^2U|^2-c|\nabla^2 U|^3-c(1+|\nabla U|^2)|\nabla U|^4.\\
\end{eqnarray*}
Now, let $a$ be a positive constant to be defined later and consider the auxiliary function $F:=(|\nabla U|^2+a^2)|\nabla^2 U|^2.$ The function $F$ satifies the following differential inequality
\begin{eqnarray*}
\Delta_fF&\geq& |\nabla^2U|^4-c(1+|\nabla U|^2)F-8|\nabla^2U|^2|\nabla U||\nabla^3U|\\
&&+|\nabla^3U|^2(|\nabla U|^2+a^2)-c|\nabla^2U|^3(|\nabla U|^2+a^2)\\
&&-c|\nabla U|^4(|\nabla U|^2+a^2)(1+|\nabla U|^2)\\
&\geq&\frac{1}{2}|\nabla^2U|^4-c(1+|\nabla U|^2)F+|\nabla^3U|^2(a^2-c|\nabla U|^2)\\
&&-c|\nabla^2U|^3(|\nabla U|^2+a^2)-c|\nabla U|^4(|\nabla U|^2+a^2)(1+|\nabla U|^2).
\end{eqnarray*}
If $a^2:=c\sup_{B(x_0,1)}|\nabla U|^2$ where $c$ is a positive constant sufficiently large, then
\begin{eqnarray*}
\Delta_fF&\geq&\frac{F^2}{2a^4}-c(1+a^2)F-ca^{-1}F^{3/2}-c(1+a^2)a^6\\
&\geq&\frac{F^2}{4a^4}-c(1+a^2)F-c(1+a^2)a^6,
\end{eqnarray*}
by Young's inequality. 

Let $\phi:\R^n\rightarrow[0,1]$ be a smooth positive function with compact support defined by $\phi(x):=\psi(r_{x_0}/r)$ where $r_{x_0}(x):=|x-x_0|$ and where $r>0$ and $\psi:[0,+\infty[\rightarrow[0,1]$ is a smooth positive function satisfying
\begin{eqnarray*}
\psi\arrowvert_{[0,1/2]}\equiv 1,\quad \psi\arrowvert_{[1,+\infty[}\equiv 0,\quad \psi'\leq 0,\quad \frac{\psi'^2}{\psi}\leq c,\quad \psi''\geq -c.
\end{eqnarray*}

 Define the (last) auxiliary function $G:=\phi F$ and consider a point $x_1\in B(x_0,1)$ such that $G(x_1)=\max_{B(x_0,1)}G$. By the previous differential inequality satisfied by $F$ evaluated at $x_1$ together with the maximum principle
\begin{eqnarray}
0&=&\nabla G=F\nabla \phi+\phi\nabla F,\\
0&\geq& \phi\Delta_fG\geq \frac{G^2}{4a^4}-ca^2G-c(1+a^2)a^6-2G\frac{\arrowvert\nabla\phi\arrowvert^2}{\phi}+G(\Delta \phi+\langle \nabla f,\nabla \phi\rangle).\label{inequ-der-shi}
\end{eqnarray}
Now,
\begin{eqnarray*}
\nabla \phi=\frac {\psi'}{r}\nabla r_{x_0},\quad\Delta \phi=\frac{\psi''}{r^2}+\frac{\psi'}{r}\Delta r_{x_0}.
\end{eqnarray*}
Hence,
\begin{eqnarray*}
2\frac{\arrowvert\nabla\phi\arrowvert^2}{\phi}-\Delta_f \phi=\frac{1}{r^2}\left[\frac{2\psi'^2}{\psi}-\psi''\right]-\frac{\psi'}{r}(\langle \nabla f,\nabla r_{x_0}\rangle+\Delta r_{x_0}).
\end{eqnarray*}
On the other hand,
\begin{eqnarray*}
\Delta r_{x_0}\leq\frac{n-1}{r}, \quad\mbox{on $B(x_0,r)\backslash B(x_0,r/2)$.}
\end{eqnarray*}
Coming back to inequality (\ref{inequ-der-shi}), one gets
\begin{eqnarray*}
0&\geq& \frac{G^2}{4a^4}-c\left(a^2+\frac{1}{r^2}+\frac{\sup_{B(x_0,r)}|\nabla f|}{r}\right)G-a^6\\
&\geq&\frac{G^2}{4a^4}-c\left(1+a^2+\frac{1}{r^2}+\frac{|x_0|}{r}\right)G-c(1+a^2)a^6,\\
\end{eqnarray*}
by the very definition of $f$ together with the triangular inequality.
The expected estimate on the second derivatives of $U$ follows immediately.

\end{proof}

We are now in a position to prove the convergence rate as stated in (\ref{conv-rate}). Indeed, by Theorem \ref{eps-reg-Chen-Str} together with Claim \ref{claim-der} applied to a point $x_0\in\R^n$ sufficiently far from the origin $0\in\R^n$ and to a radius $r:=|x_0|/2$, one gets in particular that the Laplacian of $U$ decays at least linearly at infinity. Consequently, by equation (\ref{eq-HMP-Stat}), the radial derivative of $U$ decays at least quadratically. By integrating along radial lines, $U$ approaches $u_0$ at a linear rate: $$|U(x)-u_0(x/|x|)|=\textit{O}(|x|^{-1}),$$ for every $x$ far from the origin. 

\end{proof}

\begin{rk}
It does not seem straightforward to improve the convergence rate in case $u_0$ is at least $C^3_{loc}(\R^n\setminus\{0\}).$ The main reason is the lack of an $\varepsilon$-regularity theorem that detects the smoothness of the map $u_0$ at infinity.
\end{rk}

 \section{Taylor expansions at infinity for expanders of the harmonic map flow }\label{section-Taylor}
We gather necessary conditions at infinity on an expanding solution of the Homogeneous Ginzburg-Landau flow (\ref{eq-GL-K}) or the harmonic map flow smoothly coming out of a $0$-homogeneous map $u_0:\mathbb{S}^{n-1}\rightarrow\mathbb{S}^{m-1}\subset\R^m$ in $C^{\infty}(\R^n\setminus\{0\})$. A similar treatment could be done for the Homogeneous Chen-Struwe flow for a general target closed manifold $(N,g)$ isometrically embedded in some Euclidean space $\R^m$.

Let $U$ be an expanding solution to the Homogeneous Ginzburg-Landau flow with parameter $K>0$ coming out of the map $u_0$.

Let us assume that there are smooth maps $u_i:\mathbb{S}^{n-1}\rightarrow\R^m$, $i=1,..., k$, such that
\begin{eqnarray*}
U(x)=\sum_{i=0}^k\frac{u_i(x/|x|)}{|x|^{2i}}+\textit{O}(|x|^{-2k-2}),
\end{eqnarray*}
as $x$ goes to $+\infty$ for every nonnegative integer $k$. This expansion is assumed to hold in the smooth sense. Then, on one hand,
\begin{eqnarray*}
\Delta_fU&=&\sum_{i=0}^k\left(|x|^{-2i-2}\Delta_{\mathbb{S}^{n-1}} u_i+\Delta_f(|x|^{-2i})u_i\right)+\textit{O}(|x|^{-2k-2})\\
&=&\sum_{i=0}^k\left(|x|^{-2i-2}\Delta_{\mathbb{S}^{n-1}} u_i+\left(2i(2(i+1)-n)|x|^{-2}-i\right)|x|^{-2i}u_i\right)+\textit{O}(|x|^{-2k-2})\\
&=&\sum_{i=1}^k|x|^{-2i}\left(\Delta_{\mathbb{S}^{n-1}} u_{i-1}-iu_i+2(i-1)(2i-n)u_{i-1}\right)+\textit{O}(|x|^{-2k-2}).
\end{eqnarray*}
On the other hand,
\begin{eqnarray*}
(1-|U|^2)U&=&\left(1-\left|\sum_{i=0}^k|x|^{-2i}u_i+\textit{O}{(|x|^{-2k-2})}\right|^2\right)\left(\sum_{i=0}^k|x|^{-2i}u_i+\textit{O}{(|x|^{-2k-2})}\right)\\
&=&-\left(\sum_{i=1}^{k}|x|^{-2i}\left(\sum_{j=0}^i<u_j,u_{i-j}>\right)\right)\left(\sum_{i=0}^k|x|^{-2i}u_i\right)+\textit{O}{(|x|^{-2k-2})}\\
&=&-\sum_{i=0}^k|x|^{-2i}\left(\sum_{j=0}^ia_ju_{i-j}\right)+\textit{O}{(|x|^{-2k-2})},
\end{eqnarray*}
where 
\begin{eqnarray*}
a_0:=0,\quad a_i:=\sum_{j=0}^i<u_j,u_{i-j}>,\quad i\geq 1.
\end{eqnarray*}

Therefore, by identifying terms by terms, for $i\geq 1$,
\begin{eqnarray*}
\Delta_{\mathbb{S}^{n-1}} u_{i-1}-iu_i+2(i-1)(2i-n)u_{i-1}-K\sum_{j=1}^ia_ju_{i-j}=0,
\end{eqnarray*}
which gives for $i=1$:
\begin{eqnarray*}
\Delta_{\mathbb{S}^{n-1}} u_{0}-u_1-2K<u_1,u_0>u_0=0,
\end{eqnarray*}
that is: $$u_1=\Delta_{\mathbb{S}^{n-1}} u_{0}-\frac{2K}{2K+1}<\Delta_{\mathbb{S}^{n-1}} u_{0},u_0>u_0.\label{first-coeff-ginz-land}$$
In general, if $i\geq 2$, one has:
\begin{eqnarray*}
iu_i+2K<u_i,u_0>u_0&=&\Delta_{\mathbb{S}^{n-1}} u_{0}+2(i-1)(2i-n)u_{i-1}\\
&&-K\sum_{j=1}^{i-1}a_ju_{i-j}-K\sum_{j=1}^{i-1}<u_j,u_{i-j}>u_0,
\end{eqnarray*}
which determines $u_i$.\\

Let $U$ be an expanding solution to the harmonic map flow $K>0$ coming out of the map $u_0$. Then, since the maps $(u_i)_{i\geq 0}$ are spherical, 
\begin{eqnarray*}
|\nabla U|^2&=&\left|\nabla\left(\sum_{i=0}^k|x|^{-2i}u_i\right)\right|^2+\textit{O}(|x|^{-2k-2})\\
&=&\left|\sum_{i=0}^k(-2i)|x|^{-2i-1}u_i\right|^2+\left|\sum_{i=0}^k|x|^{-2i-1}(\nabla^{\mathbb{S}^{n-1}} u_i)\right|^2+\textit{O}(|x|^{-2k-2})\\
&=&\sum_{i=0}^{k-1}|x|^{-2i-2}\left(\sum_{j=0}^i4j(i-j)<u_j,u_{i-j}>+<\nabla^{\mathbb{S}^{n-1}} u_j,\nabla^{\mathbb{S}^{n-1}} u_{i-j}>\right)+\textit{O}(|x|^{-2k-2}),
\end{eqnarray*}
which implies:
\begin{eqnarray*}
|\nabla U|^2U&=&\sum_{i=0}^k|x|^{-2i}\left(\sum_{l=0}^ib_{l}u_{i-l}\right)+\textit{O}(|x|^{-2k-2}),
\end{eqnarray*}
where,
\begin{eqnarray*}
b_0:=0,\quad b_{i+1}:=\sum_{j=0}^i4j(i-j)<u_j,u_{i-j}>+<\nabla^{\mathbb{S}^{n-1}} u_j,\nabla^{\mathbb{S}^{n-1}} u_{i-j}>,\quad i\geq 0.
\end{eqnarray*}
 By identification,
 \begin{eqnarray*}
\Delta_{\mathbb{S}^{n-1}} u_{i-1}-iu_i+2(i-1)(2i-n)u_{i-1}=-\sum_{l=0}^ib_{l}u_{i-l},\quad i\geq 1.
\end{eqnarray*}
For instance, if $i=1$,
\begin{eqnarray}
u_1=\Delta_{\mathbb{S}^{n-1}} u_{0}+|\nabla^{\mathbb{S}^{n-1}} u_0|^2u_0, \label{first-coeff-HMF}
\end{eqnarray}
which can be understood as the formal limit as $K$ goes to $+\infty$ of the sequence of the corresponding coefficients $(u_1^K)_{K>0}$ of the Taylor expansion derived previously for the Ginzburg-Landau equation with parameter $K$. Indeed, as $K$ goes to $+\infty$, one gets by (\ref{first-coeff-ginz-land}) that:
\begin{eqnarray*}
\lim_{K\rightarrow +\infty}u_1^K&=&\Delta_{\mathbb{S}^{n-1}} u_{0}-<\Delta_{\mathbb{S}^{n-1}} u_{0},u_0>u_0\\
&=&\Delta_{\mathbb{S}^{n-1}} u_{0}+|\nabla^{\mathbb{S}^{n-1}} u_0|^2u_0,
\end{eqnarray*}
which is exactly formula (\ref{first-coeff-HMF}) since $0=\Delta_{\mathbb{S}^{n-1}} |u_{0}|^2=2<\Delta_{\mathbb{S}^{n-1}}u_0,u_0>+2|\nabla^{\mathbb{S}^{n-1}} u_0|^2.$ 

The same holds for the other coefficients $(u_k)_{k\geq 1}$.

Moreover, one can check that if $u_0$ is harmonic, i.e. if $\Delta_{\mathbb{S}^{n-1}} u_{0}+|\nabla^{\mathbb{S}^{n-1}} u_0|^2u_0=0$ then all the other terms vanish: $u_k=0$, for all $k\geq 1$.

In particular, we get the following corollary:

\begin{coro}\label{coro-conv-rate-harm}
Let $u_0:\R^n\rightarrow \mathbb{S}^{m-1}$ be a $0$-homogeneous map of the harmonic map flow in $C^{\infty}(\R^n\setminus\{0\})$ and 
let $U:\R^n\rightarrow \mathbb{S}^{m-1}$ be an expanding solution of the harmonic map flow smoothly  coming out of $u_0$. Then the convergence rate of $U$ at infinity is faster than any polynomial rate.
\end{coro}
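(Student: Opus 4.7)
The plan is to iterate the expander equation (\ref{eq-HMP-Stat}) to progressively improve the decay rate of $\Phi(x):=U(x)-u_0(x)$ at infinity (recall $u_0$ is $0$-homogeneous, so $u_0(x)=u_0(x/|x|)$ for $x\neq 0$). Since $u_0:\mathbb{S}^{n-1}\to\mathbb{S}^{m-1}$ is harmonic, its $0$-homogeneous extension satisfies the ambient harmonic map equation $\Delta u_0+A(u_0)(\nabla u_0,\nabla u_0)=0$ on $\R^n\setminus\{0\}$ (using $A(u)(v,v)=|v|^2u$ for the round sphere); combined with $\partial_r u_0\equiv 0$, subtracting from (\ref{eq-HMP-Stat}) gives
$$\Delta_f \Phi+\bigl[A(U)(\nabla U,\nabla U)-A(u_0)(\nabla u_0,\nabla u_0)\bigr]=0.$$
Expanding the bracket by Taylor in $A$ and writing $\nabla U=\nabla u_0+\nabla\Phi$, the source is schematically $O(|\Phi|\,|\nabla u_0|^2)+O(|\nabla u_0|\,|\nabla\Phi|)+O(|\nabla\Phi|^2)$.

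The central step is a bootstrap: if $|\Phi(x)|=O(|x|^{-\alpha})$ outside some ball, then in fact $|\Phi(x)|=O(|x|^{-\alpha-2})$. I would carry this out in two sub-steps. First, a Bernstein-type argument modelled on the proof of Proposition \ref{a-priori-C1-weighted-Che-Str}---computing $\Delta_f|\nabla\Phi|^2$ (and then $\Delta_f|\nabla^2\Phi|^2$) from the equation for $\Phi$ and applying the maximum principle with auxiliary functions of the form $f^{\alpha+j}|\nabla^j\Phi|^2-Bf^{-1}$---upgrades the $C^0$ decay to derivative decay $|\nabla^j\Phi(x)|=O(|x|^{-\alpha-j})$ for $j=1,2$. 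Substituting into the displayed equation, the source is then $O(|x|^{-\alpha-2})$, so $\Delta_f\Phi=O(|x|^{-\alpha-2})$. In the second sub-step, the maximum principle applied to the regularised quantity $\sqrt{|\Phi|^2+\varepsilon^2}-Af^{-(\alpha+2)/2}$ outside a sufficiently large ball---using the computation $\Delta_f f^{-k}=-kf^{-k}+O(f^{-k-1})$ for large $f$ already exploited in Propositions \ref{propa-priori-weighted-diri} and \ref{a-priori-C0-weighted-Che-Str}, and picking $A$ large enough that the barrier dominates on the boundary---yields, upon letting $\varepsilon\to 0$, the bound $|\Phi(x)|\leq Af(x)^{-(\alpha+2)/2}=O(|x|^{-\alpha-2})$.

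Starting from $\alpha=1$ (supplied by (\ref{conv-rate})) and iterating the bootstrap, we obtain $|\Phi(x)|=O(|x|^{-(2k+1)})$ for every $k\geq 0$, i.e.\ convergence faster than any polynomial rate, as claimed. Structurally, the iteration never terminates because the formal Taylor expansion of the preceding subsection shows that all higher coefficients $u_k$ ($k\geq 1$) vanish when $u_0$ is harmonic, so no order-$k$ obstruction can ever arise. The main technical obstacle is the derivative-decay step: naive interior elliptic Schauder estimates for $\Delta_f$ on unit balls yield only $|\nabla^j\Phi|=O(|x|^{-\alpha})$ (no improvement beyond $\Phi$ itself), because the unbounded drift $(r/2)\partial_r$ breaks standard isotropic scaling. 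The Bernstein-type approach, which works directly with the weight $f$ and is well-adapted to the geometry of $\Delta_f$, is essential to recover the extra factor $|x|^{-1}$ per derivative needed to make each bootstrap step gain two powers rather than none.
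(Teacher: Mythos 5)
Your proposal takes a genuinely different route from the paper. The paper's own treatment of Corollary \ref{coro-conv-rate-harm} is \emph{formal}: the preceding subsection postulates that $U$ admits an asymptotic expansion $U(x)=\sum_{i=0}^{k}u_i(x/|x|)|x|^{-2i}+\textit{O}(|x|^{-2k-2})$ in the smooth sense, derives the recursion for the coefficients, and observes that $u_0$ harmonic forces $u_i\equiv 0$ for all $i\geq 1$; the corollary is simply read off from that (assumed) expansion. You instead propose a self-contained bootstrap: starting from the rate $|\Phi|=O(|x|^{-1})$ supplied by (\ref{conv-rate}), you iterate (Bernstein $\Rightarrow$ decay of $\nabla\Phi$ and $\nabla^2\Phi$ $\Rightarrow$ decay of the source $S$ $\Rightarrow$ barrier $\Rightarrow$ improved decay of $\Phi$). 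If it closes, this is a cleaner and strictly stronger argument, since it does not presuppose the existence of the expansion in $|x|^{-2i}$; the role of the formal calculation is reduced, as you correctly note, to guaranteeing that no polynomial obstruction can arise.

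However, the Bernstein step — which you rightly single out as the main technical obstacle — does not go through as written, and the analogy with Proposition \ref{a-priori-C1-weighted-Che-Str} is misleading. The problem is the \emph{a priori} bound entering the maximum-principle comparison. The $\varepsilon$-regularity theorem gives $|\nabla U|\leq C/|x|$ with a fixed constant, and since $|\nabla u_0|=O(|x|^{-1})$ as well, the only a priori estimate available for $\nabla\Phi$ is $|\nabla\Phi|=O(|x|^{-1})$ \emph{regardless} of how small $|\Phi|$ already is. Consequently, the weighted quantity you propose, $W:=f^{\alpha+1}|\nabla\Phi|^2$, is only $O(f^\alpha)$ a priori, which grows; even for the very first step ($\alpha=1$), $W=O(f)$. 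Tracing through the computation one obtains a genuine differential inequality of the form $\Delta_f W\geq cW-C_0$, but the maximum principle is inconclusive for a subsolution that is only known to be $O(f)$: the $k^{-1}\ln f$ auxiliary from the proof of Proposition \ref{a-priori-c1-bd-sol-F} is useless here because $W-k^{-1}\ln f$ need not attain a maximum, and auxiliaries like $k^{-1}f$ destroy the favourable constant. By contrast, in the paper's Propositions \ref{a-priori-c1-bd-sol-F} and \ref{a-priori-C1-weighted-Che-Str} the function $V$ decays like $f^{-1}=O(|x|^{-2})$ — one full power better than $\Phi$ at the start of your iteration — so the weighted quantity $f^2|\nabla F(V)|^2$ is $O(1)$ a priori and the comparison closes. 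Your iteration would also exacerbate this mismatch at later stages: after the $k$-th bootstrap, $|\Phi|$ has gained $|x|^{-2k}$ but the Bernstein a priori input $|\nabla\Phi|$ has gained only $|x|^{-k}$, so $f^{\alpha+1}|\nabla\Phi|^2$ grows like $f^{k}$, and the gap widens. To close the argument one would need either a derivative estimate that improves as $\Phi$ improves (not supplied by Shi-type estimates or interior Schauder on unit cylinders, as you correctly point out) or a genuinely different device — for instance, integrating the expander equation along radial lines after first controlling $\Delta\Phi$, as the paper does for the initial rate (\ref{conv-rate}), or an interpolation/integral scheme adapted to the Ornstein--Uhlenbeck structure of $\Delta_f$. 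As it stands, the Bernstein step is not a routine transplantation of Proposition \ref{a-priori-C1-weighted-Che-Str}, and this is where the proof has a gap.
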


\begin{rk}
As in \cite{Der-Asy-Com-Egs}, it can be shown that the convergence rate of a smooth expanding solution $u$ to its initial condition $u_0$ is exactly $\textit{O}\left(r^{-n}e^{-r^2/4}\right)$ if $u_0$ is harmonic, hence much faster than what Corollary \ref{coro-conv-rate-harm} predicts. This decay reveals the role of the Hermite functions at infinity: they are intimately connected to the weighted Laplacian $\Delta_f$. 
\end{rk}

\bibliographystyle{alpha.bst}
\bibliography{bib-HMF-v2-arxiv}

\end{document}